\numberwithin{equation}{subsection}
\theoremstyle{plain}
\newtheorem{theorem}{\indent\sc Theorem}[subsection]
\newtheorem{lemma}[theorem]{\indent\sc Lemma}
\newtheorem{proposition}[theorem]{\indent\sc Proposition}
\theoremstyle{definition}
\newtheorem{definition}[theorem]{\indent\sc Definition}
\newtheorem{remark}[theorem]{\indent\sc Remark}
\newtheorem*{acknowledgments}{\indent\sc Acknowledgments} 
\begin{document}

\title[Pluriharmonic maps and para-pluriharmonic maps]
{On a relation between potentials for pluriharmonic maps and para-pluriharmonic 
maps}

\author[N.~Boumuki]{Nobutaka Boumuki}
\author[J.~F.~Dorfmeister]{Josef F.~Dorfmeister}

\date{}

\subjclass[2000]{Primary 53C43; Secondary 58E20.}

\keywords{potential, pluriharmonic map, para-pluriharmonic map, the loop group 
method.}
\address{Nobutaka Boumuki\endgraf  
Osaka City University Advanced  Mathematical Institute\endgraf
3-3-138, Sugimoto, Sumiyoshi-ku, Osaka, 558-8585, Japan}
\email{boumuki@sci.osaka-cu.ac.jp}

\address{Josef F.~Dorfmeister\endgraf    
TU M\"{u}nchen, Zentrum Mathematik (M8)\endgraf  
Boltzmannstr.\ 3, 85748, Garching, Germany}
\email{dorfm@ma.tum.de}
\maketitle

\begin{abstract} 
   In this paper, we show that one can interrelate pluriharmonic maps with 
para-pluriharmonic maps by means of the loop group method.   
   As an appendix, we give examples for the interrelation between pluriharmonic 
maps and para-pluriharmonic maps. 
   Moreover, we investigate the relation among CMC-surfaces by use of such 
maps.  
\end{abstract}

\section{Introduction}\label{sec-1} 
   Let $f_1:(M_1,J)\to G_1/H_1$ be a pluriharmonic map from a complex manifold 
$(M_1,J)$, and let $f_2:(M_2,I)\to G_2/H_2$ be a para-pluriharmonic map from a 
para-complex manifold $(M_2,I)$, where $G_i/H_i$ are affine symmetric spaces. 
   Then, the loop group method enables us to obtain a pluriharmonic potential 
$(\eta_\lambda,\tau_\lambda)$ and a para-pluriharmonic potential 
$(\eta_\theta,\tau_\theta)$ from $f_1$ and $f_2$, respectively; and 
furthermore, the method enables us to construct pluriharmonic maps and 
para-pluriharmonic maps from their potentials, respectively (see Section 
\ref{sec-3}).  
\begin{center}
\maxovaldiam=2mm
\unitlength=1mm
\begin{picture}(136,30)
\put(25,24){\oval(60,12)}
\put(3,26){Plurharmonic maps}
\put(5,20){$f_1:(M_1,J)\to G_1/H_1$}
\put(25,13){$\Updownarrow$}
\put(106,13){$\Updownarrow$}
\put(106,24){\oval(60,12)}
\put(79,26){Para-plurharmonic maps}
\put(81,20){$f_2:(M_2,I)\to G_2/H_2$}
\put(25,5){\oval(60,10)}
\put(3,6){Pluriharmonic potentials}
\put(10,2){$(\eta_\lambda,\tau_\lambda)$}
\put(106,5){\oval(60,10)}
\put(79,6){Para-pluriharmonic potentials}
\put(90,2){$(\eta_\theta,\tau_\theta)$}
\end{picture}   
\end{center} 
   The goal of this paper is to interrelate $f_1:(M_1,J)\to G_1/H_1$ with 
$f_2:(M_2,I)\to G_2/H_2$ by interrelating $(\eta_\lambda,\tau_\lambda)$ with 
$(\eta_\theta,\tau_\theta)$. 
   In this paper, we demonstrate that one can indeed locally interrelate a 
pluriharmonic map with a para-pluriharmonic map in the case where its potential 
satisfies the {\it morphing condition} \eqref{M} (see Theorem \ref{thm-4.3.1}).\par 

   The notions of a pluriharmonic map and a para-pluriharmonic map are  
generalized notions of a harmonic map from a Riemann surface $\Sigma^2$ and 
a Lorentz harmonic map from a Lorentz surface $\Sigma^2_1$, respectively. 
   Consequently, Theorem \ref{thm-4.3.1} enables us to interrelate harmonic maps 
from $\Sigma^2$ with Lorentz harmonic maps from $\Sigma^2_1$. 
   Harmonic maps $f_1$ from $\Sigma^2$ or Lorentz harmonic maps $f_2$ from 
$\Sigma^2_1$ into $S^2$, $H^2$ or $S^2_1$ give rise to constant mean curvature 
surfaces (CMC-surfaces, for short) in $\mathbb{R}^3$, spacelike CMC-surfaces 
in $\mathbb{R}^3_1$ or timelike CMC-surfaces in $\mathbb{R}^3_1$; and vice 
versa.  
   For this reason, one can interrelate CMC-surfaces in $\mathbb{R}^3$ or 
$\mathbb{R}^3_1$ with other CMC-surfaces in $\mathbb{R}^3$ or $\mathbb{R}^3_1$ 
by means of Theorem \ref{thm-4.3.1}. 
   In the appendix, we present concrete examples of the method developed in 
this paper; and moreover, we investigate the relation among CMC-surfaces by use 
of such maps.\par

   This paper is organized as follows: 
   In Section \ref{sec-2} we recall the basic definitions and results 
concerning para-complex manifolds, para-pluriharmonic maps and pluriharmonic 
maps. 
   In Section \ref{sec-3} we review elementary facts and results about the 
loop group method; and we study the relation between para-pluriharmonic or 
pluriharmonic maps and loop groups. 
   In Section \ref{sec-4} we prove the main Theorem \ref{thm-4.3.1}. 
   Finally, in Section \ref{sec-5} we actually interrelate some pluriharmonic 
maps with para-pluriharmonic maps by means of Theorem \ref{thm-4.3.1}. 
\begin{acknowledgments}
   Many thanks are due to the members of GeometrieWerkstatt at the 
Universit\"{a}t T\"{u}bingen. 
   The first named author would like to express his sincere gratitude to 
Wayne Rossman, Hui Ma, Yoshihiro Ohnita, and David Brander for their 
encouragement; and he is grateful to Lars Sch\"{a}fer for his valuable advice.    
\end{acknowledgments}

\section{Pluriharmonic maps and para-pluriharmonic maps}\label{sec-2} 

\subsection{Para-complex manifolds}\label{subsec-2.1} 
   We first recall the notion of a para-complex manifold, in order to 
introduce the notion of a para-pluriharmonic map. 
\begin{definition}[{cf.\ Libermann \cite{Li1}, \cite[p.\ 82, p.\ 83]{Li2}}]
\label{def-2.1.1}\quad\par 
   (i) Let $M$ be a $2n$-dimensional real smooth manifold, and let $\frak{X}M$ 
denote the Lie algebra of smooth vector fields on $M$.  
   Then $M$ is called a {\it para-complex manifold}, if there exists a smooth 
$(1,1)$-tensor field $I$ on $M$ such that    
\begin{enumerate}
\item 
  $I^2=\operatorname{id}$; 
\item 
  $\dim_\mathbb{R}T_p^+M=n=\dim_\mathbb{R}T_p^-M$ for each 
$p\in M$; 
\item 
  $[IX,IY]-I[IX,Y]-I[X,IY]+[X,Y]=0$ for any 
$X,Y\in\frak{X}M$,  
\end{enumerate} 
where $T^\pm_pM$ denotes the $\pm$-eigenspace of $I_p$ ($=$ the value of $I$ 
at $p$) in $T_pM$.\par 
   (ii) Let $(M,I)$ and $(M',I')$ be two para-complex manifolds. 
   Then a smooth map $f:(M,I)\to(M',I')$ is called {\it para-holomorphic} 
(resp.\ {\it para-antiholomorphic}), if it satisfies $df\circ I=I'\circ df$ 
(resp.\ $df\circ I=-I'\circ df$).   
\end{definition}   

   Every para-complex manifold can be endowed with a set of special, local 
coordinates $(x_\alpha^1,\cdots,x_\alpha^n,y_\alpha^1,\cdots,y_\alpha^n)$ 
which are called {\it para-holomorphic coordinates}: 
\begin{proposition}[{cf.\ Kaneyuki-Kozai \cite[p.\ 83]{Ka-Ko}}]
\label{prop-2.1.2} 
   Let $(M,I)$ be a para-complex manifold with $\dim_\mathbb{R}M=2n$. 
   Then, $M$ has an atlas $\{(U_\alpha,\varphi_\alpha)\}_{\alpha\in A}$ with 
$U_\alpha$ open and 
$\varphi_\alpha=(x_\alpha^1,\cdots,x_\alpha^n,y_\alpha^1,\cdots,y_\alpha^n)$
a coordinate map satisfying  
\begin{enumerate}
\item[{\rm (1)}] 
 $I(\partial/\partial x_\alpha^a)=\partial/\partial x_\alpha^a$ 
and 
 $I(\partial/\partial y_\alpha^a)=-\partial/\partial y_\alpha^a$ 
for all $1\leq a\leq n;$
\item[{\rm (2)}]
 $\partial y_\beta^b/\partial x_\alpha^a
  =0=\partial x_\beta^b/\partial y_\alpha^a$ 
on $U_\alpha\cap U_\beta\neq\emptyset$ for all $1\leq a,b\leq n$. 
\end{enumerate}  
\end{proposition}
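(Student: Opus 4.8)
The plan is to derive the coordinates from the Frobenius theorem applied to the two eigendistributions of $I$. Write $T^+M$ and $T^-M$ for the subbundles of $TM$ whose fibres are the $\pm 1$-eigenspaces $T_p^\pm M$ of $I_p$; by condition (2) of Definition \ref{def-2.1.1} these have constant rank $n$, and since $I^2=\operatorname{id}$ one has the pointwise splitting $T_pM=T_p^+M\oplus T_p^-M$. The first and substantive step is to show that the para-complex integrability condition (3) of Definition \ref{def-2.1.1} forces both $T^+M$ and $T^-M$ to be involutive. For local sections $X,Y$ of $T^+M$ one has $IX=X$ and $IY=Y$, and substituting into (3) gives $2[X,Y]-2I[X,Y]=0$, i.e. $I[X,Y]=[X,Y]$, so $[X,Y]$ is again a section of $T^+M$; the symmetric substitution $IX=-X$, $IY=-Y$ yields $I[X,Y]=-[X,Y]$ for sections of $T^-M$. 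Hence by the Frobenius theorem each eigendistribution is integrable, and $M$ carries two complementary foliations $\mathcal{F}^+$ and $\mathcal{F}^-$ with tangent distributions $T^+M$ and $T^-M$ respectively.

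Next I would produce the chart by rectifying both foliations simultaneously. Since $T^-M$ is integrable of rank $n$, about each point there is a neighbourhood on which the leaves of $\mathcal{F}^-$ are the common level sets of $n$ independent first integrals $x^1,\dots,x^n$; by construction each $dx^a$ annihilates $T^-M$, so the $dx^a$ restrict to a coframe on $T^+M$. Symmetrically, integrability of $T^+M$ supplies first integrals $y^1,\dots,y^n$ with each $dy^a$ annihilating $T^+M$. Because $T_pM=T_p^+M\oplus T_p^-M$, the one-forms $dx^1,\dots,dx^n,dy^1,\dots,dy^n$ are pointwise linearly independent, so after shrinking the neighbourhood $(x^1,\dots,x^n,y^1,\dots,y^n)$ is a coordinate system. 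In it the $x^a$ are constant along $\mathcal{F}^-$ and the $y^a$ constant along $\mathcal{F}^+$, which forces $T^+M=\operatorname{span}\{\partial/\partial x^a\}$ and $T^-M=\operatorname{span}\{\partial/\partial y^a\}$; evaluating $I$ on these frames gives exactly property (1).

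Finally, property (2) follows from the observation that on $U_\alpha\cap U_\beta$ both the functions $x_\alpha^a$ and $x_\beta^b$ are first integrals of $\mathcal{F}^-$, hence each descends to a function on the local leaf space of $\mathcal{F}^-$; consequently $x_\beta^b$ depends only on $x_\alpha^1,\dots,x_\alpha^n$, which is the statement $\partial x_\beta^b/\partial y_\alpha^a=0$, and the mirror-image argument for $\mathcal{F}^+$ gives $\partial y_\beta^b/\partial x_\alpha^a=0$. I expect the only genuine obstacle to be the passage from the purely algebraic identity (3) to involutivity of the two eigendistributions; once that is in hand, everything downstream is the standard simultaneous straightening of a transverse pair of foliations, so the remaining care is purely bookkeeping — checking that the two families of first integrals can be assembled into a single chart and that their overlap maps have the block-diagonal form asserted in (2).
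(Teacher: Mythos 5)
Your proof is correct. The paper itself gives no proof of Proposition \ref{prop-2.1.2}---it is quoted from Kaneyuki--Kozai \cite{Ka-Ko}---and your argument (the integrability condition (3) of Definition \ref{def-2.1.1} forces both eigendistributions $T^{\pm}M$ to be involutive, Frobenius then yields two transverse foliations whose simultaneous straightening produces the chart, and property (2) follows because the $x^a$'s, resp.\ $y^a$'s, of any two such charts are first integrals of the same foliation) is exactly the standard one used in that reference; the computation $2[X,Y]\mp 2I[X,Y]=0$ for sections of $T^{\pm}M$ is the only substantive step, and you carry it out correctly.
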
  

   A Lorentz surface and a one sheeted hyperboloid are one of the examples of 
para-complex manifold.

\subsection{Para-pluriharmonic maps}\label{subsec-2.2}
\subsubsection{}\label{subsec-2.2.1} 
   Now, let us recall the notion of a para-pluriharmonic map: 
\begin{definition}[{cf.\ Sch\"{a}fer \cite[p.\ 72]{Sc1}}]
\label{def-2.2.1}
  Let $(M,I)$ be a para-complex manifold with $\dim_\mathbb{R}M=2n$, and let 
$N$ be a smooth manifold with a torsion-free affine connection $\nabla^N$. 
  Then a smooth map $f:(M,I)\to(N,\nabla^N)$ is called {\it para-pluriharmonic}, 
if it satisfies   
\begin{equation}\label{P}\tag{P}
  (\nabla df)\bigl(\frac{\partial}{\partial y^a},
                   \frac{\partial}{\partial x^b}\bigr)=0 
\quad \mbox{for all $1\leq a,b\leq n$},     
\end{equation}
for any local para-holomorphic coordinate $(x^1,\cdots,x^n,y^1,\cdots,y^n)$ on 
$(M,I)$.   
   Here $\nabla$ denotes the connection on $\operatorname{End}(TM,f^{-1}TN)$ 
which is induced from $D$ and $\nabla^N$, where $D$ is any para-complex (i.e., 
$DI=0$) torsion-free affine connection on $(M,I)$.    
\end{definition}

\begin{remark}\label{rem-2.2.2} 
  Every para-complex manifold admits a para-complex torsion-free affine 
connection (cf.\ \cite[p.\ 64]{Sc1}). 
\end{remark}

   The following lemma implies that the equation \eqref{P} in Definition 
\ref{def-2.2.1} is independent of the choice of para-complex torsion-free 
affine connections on $(M,I)$: 
\begin{lemma}\label{lem-2.2.3}
   Let $(M,I)$ be a para-complex manifold with $\dim_\mathbb{R}M=2n$, and let 
$D$ be any para-complex torsion-free affine connection on $(M,I)$.
   Then, every local para-holomorphic coordinate 
$(x^1,\cdots,x^n,y^1,\cdots,y^n)$ on $(M,I)$ satisfies 
$D_{\partial_+^a}\partial_-^b=0=D_{\partial_-^a}\partial_+^b$ for all 
$1\leq a,b\leq n$. 
   Here, $\partial_+^a:=\partial/\partial x^a$ and 
$\partial_-^a:=\partial/\partial y^a$.  
\end{lemma}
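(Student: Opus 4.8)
The plan is to prove this by combining the defining properties of para-holomorphic coordinates (Proposition \ref{prop-2.1.2}) with the two structural assumptions on $D$, namely that $D$ is torsion-free and para-complex ($DI=0$). Let me think about what these give us concretely.

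The coordinate vector fields $\partial_+^a = \partial/\partial x^a$ and $\partial_-^a = \partial/\partial y^a$ are by construction eigenvectors of $I$: property (1) says $I\partial_+^a = \partial_+^a$ and $I\partial_-^a = -\partial_-^a$. So $\partial_+^a$ lies in $T^+M$ and $\partial_-^a$ lies in $T^-M$. I want to show the mixed covariant derivatives $D_{\partial_+^a}\partial_-^b$ and $D_{\partial_-^a}\partial_+^b$ vanish.

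The key computation should use $DI=0$ together with torsion-freeness. Since $D$ is para-complex, $I$ is parallel, so $D_X(IY) = I(D_XY)$ for any vector fields $X,Y$. Now apply this with $X=\partial_+^a$ and $Y=\partial_-^b$: since $I\partial_-^b = -\partial_-^b$, the left side is $D_{\partial_+^a}(-\partial_-^b) = -D_{\partial_+^a}\partial_-^b$, while the right side is $I(D_{\partial_+^a}\partial_-^b)$. Hence $I(D_{\partial_+^a}\partial_-^b) = -D_{\partial_+^a}\partial_-^b$, which says $D_{\partial_+^a}\partial_-^b$ lies in the $(-1)$-eigenspace $T^-M$. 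Symmetrically, taking $X=\partial_-^a$, $Y=\partial_+^b$ shows $D_{\partial_-^a}\partial_+^b \in T^+M$. This alone is not yet enough, so I need a second relation.

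The second ingredient is torsion-freeness: since $[\partial_+^a,\partial_-^b]=0$ (coordinate vector fields commute), we get $D_{\partial_+^a}\partial_-^b = D_{\partial_-^b}\partial_+^a$. By the previous paragraph the left-hand side lies in $T^-M$, while the right-hand side, being of the form $D_{\partial_-^{(\cdot)}}\partial_+^{(\cdot)}$, lies in $T^+M$. Since $T^+M \cap T^-M = \{0\}$ (the eigenspaces of $I$ for distinct eigenvalues $\pm1$ intersect trivially, as guaranteed by (2) of Definition \ref{def-2.1.1}), both sides must vanish. This gives $D_{\partial_+^a}\partial_-^b = 0 = D_{\partial_-^a}\partial_+^b$ simultaneously, completing the proof. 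I expect the only delicate point to be correctly combining the eigenspace membership from $DI=0$ with the symmetry from vanishing torsion; the algebra itself is routine once one notices that the two conditions force the mixed derivative into the trivial intersection $T^+M \cap T^-M$.
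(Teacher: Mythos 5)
Your proposal is correct and follows exactly the paper's own argument: use $DI=0$ to place $D_{\partial_+^a}\partial_-^b$ in $T^-M$ and $D_{\partial_-^b}\partial_+^a$ in $T^+M$, then invoke torsion-freeness together with $[\partial_+^a,\partial_-^b]=0$ to equate the two, forcing both into $T^+M\cap T^-M=\{0\}$. No differences worth noting.
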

\begin{proof}
   It follows from $DI=0$ that for any $1\leq a,b\leq n$,  
\[
 I(D_{\partial_+^a}\partial_-^b)=D_{\partial_+^a}I(\partial_-^b)
  -(D_{\partial_+^a}I)\partial_-^b
 =D_{\partial_+^a}I(\partial_-^b)=-D_{\partial_+^a}\partial_-^b. 
\] 
   This yields $D_{\partial_+^a}\partial_-^b\in T^-M$. 
   Similarly one has $D_{\partial_-^b}\partial_+^a\in T^+M$. 
   Therefore we conclude  
\[
T^-M\ni 
 D_{\partial_+^a}\partial_-^b
=D_{\partial_-^b}\partial_+^a+[\partial_+^a,\partial_-^b]
=D_{\partial_-^b}\partial_+^a
 \in T^+M        
\]
because the torsion of $D$ is free. 
   Thus $D_{\partial_+^a}\partial_-^b=0=D_{\partial_-^b}\partial_+^a$. 
\end{proof}

\subsubsection{}\label{subsec-2.2.2}
   Our goal in this subsection is to show Proposition \ref{prop-2.2.4} (below) 
which will play an important role in Section \ref{sec-3}.   
   First, let us fix the setting and the notation of the proposition.\par    

   Let $G$ be a connected matrix group, and let $\sigma$ be an involution of 
$G$. 
   We denote by $H$ the fixed point set of $\sigma$ in $G$, and get an affine 
symmetric space $(G/H,\sigma)$. 
   Let $(M,I)$ be a para-complex manifold of dimension $2n$, and let $F$ be a 
smooth map from $(M,I)$ into $G$. 
   Then we consider: 
\begin{enumerate}[(2.2.1)] 
\item 
  $\pi$: the projection from $G$ onto $G/H$, 
\item 
  $\nabla^1$: the canonical affine connection on $(G/H,\sigma)$ 
(see \cite[p.\ 54]{No} for the definition of the canonical affine connection),
\item 
  $\alpha:=F^{-1}\cdot dF$: the pullback of the left-invariant Maurer-Cartan 
form on $G$ along $F$, 
\item 
 $\frak{g}:=\operatorname{Lie}G$,\quad   
 $\frak{h}:=\operatorname{Fix}(\frak{g},d\sigma)$,\quad 
 $\frak{m}:=\operatorname{Fix}(\frak{g},-d\sigma)$,
\item 
 $\alpha_\frak{h}$ (resp.\ $\alpha_\frak{m}$): the $\frak{h}$-component 
(resp.\ the $\frak{m}$-component) of $\alpha$ with respect to 
$\frak{g}=\frak{h}\oplus\frak{m}$, 
\item 
 $\alpha_\frak{h}^\pm:=(1/2)\cdot(\alpha_\frak{h}\pm{}^tI(\alpha_\frak{h}))$, 
\quad 
 $\alpha_\frak{m}^\pm:=(1/2)\cdot(\alpha_\frak{m}\pm{}^tI(\alpha_\frak{m}))$,
\item 
 $\partial_-\alpha_\frak{m}^++[\alpha_\frak{h}^-\wedge\alpha_\frak{m}^+]=0$ 
as an abbreviation for  
$\partial_-^a(\alpha_\frak{m}(\partial_+^b))
  +[\alpha_\frak{h}(\partial_-^a),\alpha_\frak{m}(\partial_+^b)]=0$ 
for all $1\leq a,b\leq n$, where $(x^1,\cdots,x^n,y^1,\cdots,y^n)$ is 
any local para-holomorphic coordinate system on $(M,I)$,
\item 
 $T^\pm M$: the subbundle of the tangent bundle $TM$ determined by the 
$\pm 1$-eigenspace of $I$ in $TM$,
\item
 $[\alpha_\frak{m}^\pm\wedge\alpha_\frak{m}^\pm]=0$ as an abbreviation of: 
$[\alpha_\frak{m}\wedge\alpha_\frak{m}]\equiv 0$ on $T^+ M\times T^+ M$ and 
on $T^- M\times T^- M$,
\item
$\mathbb{C}^*:=\mathbb{C}\setminus\{0\}$.         
\end{enumerate}
\setcounter{equation}{10}
   Now, we are in a position to state  
\begin{proposition}\label{prop-2.2.4}
   With the above setting and notation, the following statements {\rm (a)} and 
{\rm (b)} are equivalent$:$ 
\begin{enumerate}
\item[{\rm (a)}] 
   A map $f:=\pi\circ F:(M,I)\to(G/H,\nabla^1)$ is para-pluriharmonic and 
satisfies 
$[\alpha_\frak{m}^+\wedge\alpha_\frak{m}^+]=0
 =[\alpha_\frak{m}^-\wedge\alpha_\frak{m}^-];$ 
\item[{\rm (b)}] 
 $d\alpha^\mu+(1/2)\cdot[\alpha^\mu\wedge\alpha^\mu]=0$ for any 
$\mu\in\mathbb{C}^*$, where  
$\alpha^\mu:=\alpha_\frak{h}+\mu^{-1}\cdot\alpha_\frak{m}^+
 +\mu\cdot\alpha_\frak{m}^-$. 
\end{enumerate}  
\end{proposition}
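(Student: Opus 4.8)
The plan is to substitute $\alpha^\mu=\alpha_\frak{h}+\mu^{-1}\alpha_\frak{m}^++\mu\,\alpha_\frak{m}^-$ into the left-hand side of (b) and read off the coefficient of each power of $\mu$. Since $\alpha=F^{-1}\,dF$ is a pulled-back Maurer--Cartan form, it satisfies $d\alpha+(1/2)[\alpha\wedge\alpha]=0$ identically; because $\alpha_\frak{m}^++\alpha_\frak{m}^-=\alpha_\frak{m}$ we have $\alpha^1=\alpha$, so this identity is exactly (b) specialized to $\mu=1$. The strategy is to exploit this ``free'' identity, together with the symmetric-space relations $[\frak{h},\frak{h}]\subset\frak{h}$, $[\frak{h},\frak{m}]\subset\frak{m}$, $[\frak{m},\frak{m}]\subset\frak{h}$, to decide which parts of (b) hold automatically and which carry the actual content.

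First I would expand $d\alpha^\mu+(1/2)[\alpha^\mu\wedge\alpha^\mu]$ as a Laurent polynomial in $\mu$. As such a polynomial vanishes for every $\mu\in\mathbb{C}^*$ if and only if all its coefficients vanish, statement (b) is equivalent to the simultaneous vanishing of
\[ P_{-2}=(1/2)[\alpha_\frak{m}^+\wedge\alpha_\frak{m}^+],\quad P_{-1}=d\alpha_\frak{m}^++[\alpha_\frak{h}\wedge\alpha_\frak{m}^+],\quad P_{1}=d\alpha_\frak{m}^-+[\alpha_\frak{h}\wedge\alpha_\frak{m}^-], \]
\[ P_{2}=(1/2)[\alpha_\frak{m}^-\wedge\alpha_\frak{m}^-],\quad P_{0}=d\alpha_\frak{h}+(1/2)[\alpha_\frak{h}\wedge\alpha_\frak{h}]+[\alpha_\frak{m}^+\wedge\alpha_\frak{m}^-], \]
which are the coefficients of $\mu^{-2},\mu^{-1},\mu^{1},\mu^{2},\mu^{0}$ respectively. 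Next I would decompose the Maurer--Cartan identity $P_{-2}+P_{-1}+P_0+P_1+P_2=0$ both according to $\frak{g}=\frak{h}\oplus\frak{m}$ and according to the bidegree of $2$-forms induced by $TM=T^+M\oplus T^-M$, writing the components of type $(2,0)$, $(1,1)$, $(0,2)$ for the restrictions to $T^+M\times T^+M$, $T^+M\times T^-M$, $T^-M\times T^-M$. By the bracket relations, $P_{-2},P_0,P_2$ are $\frak{h}$-valued and $P_{-1},P_1$ are $\frak{m}$-valued, while $\alpha_\frak{m}^+$ and $\alpha_\frak{m}^-$ have type $(1,0)$ and $(0,1)$; matching the $\frak{g}$-degree and bidegree then shows that the Maurer--Cartan identity already forces $P_{-1}^{(2,0)}=0$, $P_1^{(0,2)}=0$ and $P_0^{(1,1)}=0$, and that it yields the three relations $P_{-1}^{(1,1)}=-P_1^{(1,1)}$, $P_{-2}=-P_0^{(2,0)}$ and $P_2=-P_0^{(0,2)}$.

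The main obstacle is to identify the mixed component $P_{-1}^{(1,1)}$ with the para-pluriharmonicity equation \eqref{P}. Evaluating $P_{-1}$ on $(\partial_-^a,\partial_+^b)$ and using that coordinate fields commute and that $\alpha_\frak{m}^+$ annihilates $T^-M$, one obtains $P_{-1}(\partial_-^a,\partial_+^b)=\partial_-^a(\alpha_\frak{m}(\partial_+^b))+[\alpha_\frak{h}(\partial_-^a),\alpha_\frak{m}(\partial_+^b)]$. On the other hand, in the trivialization of $f^{-1}T(G/H)$ supplied by $F$, the pullback of the canonical connection of \cite[p.\ 54]{No} along $f=\pi\circ F$ acts on $\frak{m}$-valued sections as $d+\operatorname{ad}(\alpha_\frak{h})$, and $df$ corresponds to $\alpha_\frak{m}$; hence $(\nabla df)(\partial_-^a,\partial_+^b)$ equals the very same expression, the domain Christoffel terms being killed by Lemma \ref{lem-2.2.3}, which gives $D_{\partial_-^a}\partial_+^b=0$. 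Establishing this dictionary---carefully unwinding the definition of the canonical connection, its pullback and the $F$-trivialization, and tracking the signs---is the delicate part; granting it, $P_{-1}^{(1,1)}=0$ is precisely \eqref{P}.

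Finally I would assemble the chain of equivalences. Since $P_{-1}^{(2,0)}=0$ holds automatically, $P_{-1}=0\Leftrightarrow P_{-1}^{(1,1)}=0\Leftrightarrow\eqref{P}$, and by $P_{-1}^{(1,1)}=-P_1^{(1,1)}$ together with $P_1^{(0,2)}=0$ this is also equivalent to $P_1=0$. Likewise, since $P_0^{(1,1)}=0$ automatically, $P_0=0$ is equivalent to $P_0^{(2,0)}=0=P_0^{(0,2)}$, hence, by the remaining two relations, to $P_{-2}=0=P_2$; and $P_{-2}=0=P_2$ are exactly the bracket conditions $[\alpha_\frak{m}^+\wedge\alpha_\frak{m}^+]=0=[\alpha_\frak{m}^-\wedge\alpha_\frak{m}^-]$ appearing in (a). Collecting everything, (b) $\Leftrightarrow$ $P_{-2}=P_{-1}=P_0=P_1=P_2=0$ $\Leftrightarrow$ [\,$f$ satisfies \eqref{P} and $[\alpha_\frak{m}^\pm\wedge\alpha_\frak{m}^\pm]=0$\,] $\Leftrightarrow$ (a), as desired.
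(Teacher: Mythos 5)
Your proposal is correct and follows essentially the same route as the paper: expand $d\alpha^\mu+(1/2)[\alpha^\mu\wedge\alpha^\mu]$ as a Laurent polynomial in $\mu$, use the Maurer--Cartan identity together with $[\frak{h},\frak{h}]\subset\frak{h}$, $[\frak{h},\frak{m}]\subset\frak{m}$, $[\frak{m},\frak{m}]\subset\frak{h}$ to isolate the independent coefficients, and identify the mixed $(1,1)$-part of the $\mu^{-1}$-coefficient with the para-pluriharmonicity equation \eqref{P}. Your ``dictionary'' step (the pullback of $\nabla^1$ acting as $d+\operatorname{ad}(\alpha_\frak{h})$ on $\alpha_\frak{m}=df$ in the $F$-trivialization, with domain Christoffel terms killed by Lemma \ref{lem-2.2.3}) is exactly the content of the paper's Lemma \ref{lem-2.2.5}, which it establishes via Higaki's formulas for $\beta$ and $f^*\beta=\operatorname{Ad}F\cdot\alpha_\frak{m}$; your double grading by $\frak{h}\oplus\frak{m}$ and bidegree is just a more systematic packaging of the paper's equations \eqref{eq-2.2.12}--\eqref{eq-2.2.13} and its use of the symmetry of \eqref{P} in $x^b$ and $y^a$.
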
    

   In order to prove the above proposition, we first show 
\begin{lemma}\label{lem-2.2.5}
   $f=\pi\circ F:(M,I)\to(G/H,\nabla^1)$ is a para-pluriharmonic map if and 
only if 
$\partial_-\alpha_\frak{m}^++[\alpha_\frak{h}^-\wedge\alpha_\frak{m}^+]=0$ 
$($cf.\ $(2.2.7))$.    
\end{lemma}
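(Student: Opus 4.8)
The plan is to express the para-pluriharmonicity condition \eqref{P}, namely $(\nabla df)(\partial_-^a,\partial_+^b)=0$ for all $1\le a,b\le n$, in terms of the pulled-back Maurer--Cartan form $\alpha=F^{-1}\cdot dF$ by trivializing $f^{-1}T(G/H)$ along $F$. First I would recall that for the affine symmetric space $(G/H,\sigma)$ one has the reductive decomposition $\frak{g}=\frak{h}\oplus\frak{m}$ with $[\frak{h},\frak{m}]\subseteq\frak{m}$, and that Nomizu's canonical connection $\nabla^1$ is the one induced on the associated bundle $T(G/H)\cong G\times_H\frak{m}$ by the principal connection on $\pi:G\to G/H$ whose connection form is the $\frak{h}$-part of the Maurer--Cartan form. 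Using $F$ as a local frame this yields the identification $f^{-1}T(G/H)\cong M\times\frak{m}$ under which $df$ corresponds to $\alpha_\frak{m}$ and $\nabla^1$ acts on an $\frak{m}$-valued function $s$ by $\nabla^1_X s=X(s)+[\alpha_\frak{h}(X),s]$.

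Granting this dictionary, the Hessian $(\nabla df)(X,Y)=\nabla^1_X(df(Y))-df(D_XY)$ evaluates, for $X=\partial_-^a$ and $Y=\partial_+^b$, to
\[
(\nabla df)(\partial_-^a,\partial_+^b)
 =\partial_-^a\bigl(\alpha_\frak{m}(\partial_+^b)\bigr)
 +[\alpha_\frak{h}(\partial_-^a),\alpha_\frak{m}(\partial_+^b)]
 -\alpha_\frak{m}\bigl(D_{\partial_-^a}\partial_+^b\bigr).
\]
At this point I would invoke Lemma \ref{lem-2.2.3}: since $D$ is a para-complex torsion-free connection, $D_{\partial_-^a}\partial_+^b=0$, so the last term vanishes. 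The surviving expression $\partial_-^a(\alpha_\frak{m}(\partial_+^b))+[\alpha_\frak{h}(\partial_-^a),\alpha_\frak{m}(\partial_+^b)]$ is precisely the left-hand side spelled out in the abbreviation $(2.2.7)$; indeed, because $I\partial_+^b=\partial_+^b$ and $I\partial_-^a=-\partial_-^a$ give $\alpha_\frak{m}(\partial_+^b)=\alpha_\frak{m}^+(\partial_+^b)$ and $\alpha_\frak{h}(\partial_-^a)=\alpha_\frak{h}^-(\partial_-^a)$, this matches the symbolic form $\partial_-\alpha_\frak{m}^++[\alpha_\frak{h}^-\wedge\alpha_\frak{m}^+]$. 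Hence \eqref{P} holds for all $a,b$ if and only if that quantity vanishes for all $a,b$, which is the claimed equivalence.

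I expect the real work to lie entirely in the first paragraph: verifying directly from the definition of the canonical connection the frame formulas $df\leftrightarrow\alpha_\frak{m}$ and $\nabla^1_X s=X(s)+[\alpha_\frak{h}(X),s]$, in particular pinning down the placement and sign of the bracket term and checking independence of the chosen lift $F$. The computation in the second paragraph is then short, its only nontrivial input being Lemma \ref{lem-2.2.3}, and the argument is purely pointwise in the coordinate fields $\partial_\pm^a$, so no hypotheses beyond those furnished by Proposition \ref{prop-2.1.2} are needed.
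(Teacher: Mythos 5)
Your argument is correct and lands on exactly the identity the paper derives, but it reaches that identity through a different trivialization and a different external input. The paper first uses Lemma \ref{lem-2.2.3} to reduce \eqref{P} to $\nabla^1_{\partial_-^a}\bigl(df(\partial_+^b)\bigr)=0$, then embeds $T(G/H)$ into the trivial bundle $G/H\times\frak{g}$ via the injective map $\beta$ of Burstall--Rawnsley/Higaki, invokes Higaki's formula $\beta\bigl(\nabla^1_X V\bigr)=X(\beta(V))-\bigl[\beta(df(X)),\beta(V)\bigr]$ together with $f^*\beta=\operatorname{Ad}F\cdot\alpha_\frak{m}$, and obtains the bracket $[\alpha_\frak{h}(\partial_-^a),\alpha_\frak{m}(\partial_+^b)]$ only after the $\frak{m}$-part of $\alpha(\partial_-^a)$ (produced by differentiating $\operatorname{Ad}F$) cancels against the term $[\alpha_\frak{m}(\partial_-^a),\alpha_\frak{m}(\partial_+^b)]$. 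You instead trivialize $f^{-1}T(G/H)\cong M\times\frak{m}$ directly in the $F$-gauge of the associated bundle $G\times_H\frak{m}$, where $df\leftrightarrow\alpha_\frak{m}$ and $\nabla^1_Xs=X(s)+[\alpha_\frak{h}(X),s]$, so the $\frak{h}$-component is isolated from the start and no cancellation occurs; Lemma \ref{lem-2.2.3} enters for you at the end (to kill $\alpha_\frak{m}(D_{\partial_-^a}\partial_+^b)$) rather than at the beginning. Both routes rest on the same geometric fact — that $\nabla^1$ is the connection induced by the principal connection form $\alpha_\frak{h}$, i.e.\ the canonical connection of the second kind — which the paper discharges by citing Higaki and which you correctly flag as the residual work in your first paragraph; your frame formulas, including the sign and placement of the bracket, are the standard and correct ones, so the proposal is sound. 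What your version buys is a cleaner bookkeeping (no $\operatorname{Ad}F$-conjugation, no auxiliary cancellation); what the paper's version buys is that the needed covariant-derivative formula comes ready-made from the literature.
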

\begin{proof} 
   Lemma \ref{lem-2.2.3} allows us to reduce the equation \eqref{P} in 
Definition \ref{def-2.2.1} as follows: 
$(\nabla df)({\partial_-^a},\partial_+^b)
 =\nabla^1_{\partial_-^a}\bigl(df(\partial_+^b)\bigr)$. 
   This implies that 
\[
\mbox{$f=\pi\circ F$ is para-pluriharmonic if and only if 
$\beta\bigl(\nabla^1_{\partial_-^a}\bigl(df(\partial_+^b)\bigr)\bigr)=0$}
\]
because $\beta:T(G/H)\to G/H\times\frak{g}$ is injective (see \cite{Bu-Ra} or 
\cite[p.\ 403]{Hg} for $\beta$). 
   Accordingly, it suffices to show that 
\begin{equation}\label{eq-2.2.11} 
\mbox{
$\beta\bigl(\nabla^1_{\partial_-^a}\bigl(df(\partial_+^b)\bigr)\bigr)=0$ 
if and only if 
$\partial_-\alpha_\frak{m}^++[\alpha_\frak{h}^-\wedge\alpha_\frak{m}^+]=0$}.
\end{equation}
   To prove this we note first that it is known that $\nabla^1$ coincides with 
the canonical affine connection of the second kind (cf.\ \cite[p.\ 53]{No}). 
   Therefore, Proposition 1.4 and Lemma 1.1 in \cite[p.\ 404, p.\ 403]{Hg} 
assure that 
\[
 \beta\bigl(\nabla^1_{\partial_-^a}\bigl(df(\partial_+^b)\bigr)\bigr) 
=\partial_-^a\bigl(\beta\bigl(df(\partial_+^b)\bigr)\bigr)
 -\big[\beta\bigl(df(\partial_-^a)\bigr),
  \beta\bigl(df(\partial_+^b)\bigr)\big].  
\]
   Let us compute each term on the right-hand side of the above equation. 
   We note that $f^*\beta=\operatorname{Ad}F\cdot\alpha_\frak{m}$ (cf.\ 
\cite[p.\ 409]{Hg}) implies   
\begin{multline*}
\partial_-^a\bigl(\beta\bigl(df(\partial_+^b)\bigr)\bigr)
=\partial_-^a\bigl((f^*\beta)(\partial_+^b)\bigr) 
=\partial_-^a\bigl(F\cdot\alpha_\frak{m}(\partial_+^b)\cdot F^{-1}\bigr)\\
=(\partial_-^a F)\cdot\alpha_\frak{m}(\partial_+^b)\cdot F^{-1}
 +F\cdot\partial_-^a\bigl(\alpha_\frak{m}(\partial_+^b)\bigr)\cdot F^{-1} 
 -F\cdot\alpha_\frak{m}(\partial_+^b)
   \cdot F^{-1}\cdot(\partial_-^a F)\cdot F^{-1}\\
=\operatorname{Ad}F\cdot\left\{
 \partial_-^a\bigl(\alpha_\frak{m}(\partial_+^b)\bigr)
   +\bigl[F^{-1}\cdot(\partial_-^a F),
     \alpha_\frak{m}(\partial_+^b)\bigr] \right\}\\
=\operatorname{Ad}F\cdot
 \left\{\partial_-^a\bigl(\alpha_\frak{m}(\partial_+^b)\bigr)
  +\bigl[\alpha(\partial_-^a),
      \alpha_\frak{m}(\partial_+^b)\bigr]
     \right\}.      
\end{multline*}
   Moreover, $f^*\beta=\operatorname{Ad}F\cdot\alpha_\frak{m}$ yields  
\[
\big[\beta\bigl(df(\partial_-^a)\bigr),\beta\bigl(df(\partial_+^b)\bigr)\big]
=\big[(f^*\beta)(\partial_-^a),(f^*\beta)(\partial_+^b)\big]
=\operatorname{Ad}F\cdot\big[\alpha_\frak{m}(\partial_-^a),
   \alpha_\frak{m}(\partial_+^b)\big]. 
\] 
  Therefore we obtain   
\[
 \beta\bigl(\nabla^1_{\partial_-^a}\bigl(df(\partial_+^b)\bigr)\bigr)
=\operatorname{Ad}F\cdot
 \left\{\partial_-^a\bigl(\alpha_\frak{m}(\partial_+^b)\bigr)
  +\bigl[\alpha_\frak{h}(\partial_-^a),\alpha_\frak{m}(\partial_+^b)\bigr]
     \right\}.
\]
  Hence we have shown \eqref{eq-2.2.11}.      
\end{proof}        

\begin{proof}[Proof of Proposition {\rm \ref{prop-2.2.4}}]
   First we rewrite the expression  
$d\alpha^\mu+(1/2)\cdot[\alpha^\mu\wedge\alpha^\mu]$. 
   Since $\alpha=F^{-1}\cdot dF$ we have  
$d\alpha+(1/2)\cdot[\alpha\wedge\alpha]=0$. 
   From 
$[\frak{h},\frak{h}]\subset\frak{h}$,  $[\frak{h},\frak{m}]\subset\frak{m}$ and 
$[\frak{m},\frak{m}]\subset\frak{h}$ we obtain   
$d\alpha_\frak{h}
 +(1/2)\cdot\bigl([\alpha_\frak{h}\wedge\alpha_\frak{h}]
  +[\alpha_\frak{m}\wedge\alpha_\frak{m}]\bigr)=0=
d\alpha_\frak{m}+[\alpha_\frak{h}\wedge\alpha_\frak{m}]$. 
  Thus we can assert that 
\begin{equation}\label{eq-2.2.12}
\begin{split}
&
d\alpha_\frak{h}
 +\frac{1}{2}\cdot[\alpha_\frak{h}\wedge\alpha_\frak{h}]
  +[\alpha_\frak{m}^+\wedge\alpha_\frak{m}^-]
=-\frac{1}{2}\cdot
   \left([\alpha_\frak{m}^+\wedge\alpha_\frak{m}^+]
    +[\alpha_\frak{m}^-\wedge\alpha_\frak{m}^-]\right),\\
&
d\alpha_\frak{m}+[\alpha_\frak{h}\wedge\alpha_\frak{m}]=0.
\end{split} 
\end{equation}
   By a direct computation we obtain  
\[
\begin{split}
d\alpha^\mu+\frac{1}{2}\cdot[\alpha^\mu\wedge\alpha^\mu]
=& d\alpha_\frak{h}+\frac{1}{2}\cdot[\alpha_\frak{h}\wedge\alpha_\frak{h}]
  +[\alpha_\frak{m}^+\wedge\alpha_\frak{m}^-]\\
&+\mu^{-1}\cdot\left(d\alpha_\frak{m}^+
   +[\alpha_\frak{h}\wedge\alpha_\frak{m}^+]\right)
 +\mu\cdot\left(d\alpha_\frak{m}^-
   +[\alpha_\frak{h}\wedge\alpha_\frak{m}^-]\right)\\
&+\frac{1}{2}\cdot\mu^{-2}\cdot[\alpha_\frak{m}^+\wedge\alpha_\frak{m}^+]
 +\frac{1}{2}\cdot\mu^2\cdot[\alpha_\frak{m}^-\wedge\alpha_\frak{m}^-].    
\end{split}  
\] 
   Consequently, by virtue of \eqref{eq-2.2.12} one can rewrite 
$d\alpha^\mu+(1/2)\cdot[\alpha^\mu\wedge\alpha^\mu]$ as follows: 
\begin{equation}\label{eq-2.2.13}
\begin{split}
d\alpha^\mu+\frac{1}{2}\cdot[\alpha^\mu\wedge\alpha^\mu]
=&\mu^{-1}\cdot\left(d\alpha_\frak{m}^+
   +[\alpha_\frak{h}\wedge\alpha_\frak{m}^+]\right)
 +\mu\cdot\left(d\alpha_\frak{m}^-
   +[\alpha_\frak{h}\wedge\alpha_\frak{m}^-]\right)\\
&+\frac{1}{2}\cdot(\mu^{-2}-1)\cdot[\alpha_\frak{m}^+\wedge\alpha_\frak{m}^+]
 +\frac{1}{2}\cdot(\mu^2-1)\cdot[\alpha_\frak{m}^-\wedge\alpha_\frak{m}^-].
\end{split}
\end{equation}

  (a)$\to$(b): Suppose that $f=\pi\circ F$ is para-pluriharmonic and 
satisfies 
$[\alpha_\frak{m}^+\wedge\alpha_\frak{m}^+]=0
  =[\alpha_\frak{m}^-\wedge\alpha_\frak{m}^-]$. 
   Then, \eqref{eq-2.2.13} yields   
\[
d\alpha^\mu+\frac{1}{2}\cdot[\alpha^\mu\wedge\alpha^\mu]
 =\mu^{-1}\cdot\left(d\alpha_\frak{m}^+
   +[\alpha_\frak{h}\wedge\alpha_\frak{m}^+]\right)
 +\mu\cdot\left(d\alpha_\frak{m}^-
   +[\alpha_\frak{h}\wedge\alpha_\frak{m}^-]\right).  
\]
   So it suffices to show 
\[
d\alpha_\frak{m}^++[\alpha_\frak{h}\wedge\alpha_\frak{m}^+]
 =0= 
  d\alpha_\frak{m}^-+[\alpha_\frak{h}\wedge\alpha_\frak{m}^-].
\] 
   Equation \eqref{P} in Definition \ref{def-2.2.1} is symmetric with respect 
to the variables $y^a$ and $x^b$. 
   This and Lemma \ref{lem-2.2.5} imply that   
\[
\begin{split}
\mbox{$f=\pi\circ F$ is para-pluriharmonic}
&\mbox{ if and only if 
  $\partial_-\alpha_\frak{m}^++[\alpha_\frak{h}^-\wedge\alpha_\frak{m}^+]=0$}\\
&\mbox{ if and only if 
 $\partial_+\alpha_\frak{m}^-+[\alpha_\frak{h}^+\wedge\alpha_\frak{m}^-]=0$}.
\end{split}
\]
  Therefore it follows from 
$d\alpha_\frak{m}+[\alpha_\frak{h}\wedge\alpha_\frak{m}]=0$ (cf.\ 
\eqref{eq-2.2.12}) that 
$d\alpha_\frak{m}^++[\alpha_\frak{h}\wedge\alpha_\frak{m}^+]
 =0=d\alpha_\frak{m}^-+[\alpha_\frak{h}\wedge\alpha_\frak{m}^-]$.
\par 
   
   (b)$\to$(a): Suppose that 
$d\alpha^\mu+(1/2)\cdot[\alpha^\mu\wedge\alpha^\mu]=0$ for any 
$\mu\in\mathbb{C}^*$. 
   We obtain $d\alpha_\frak{m}^++[\alpha_\frak{h}\wedge\alpha_\frak{m}^+]=0$ 
and    
$[\alpha_\frak{m}^+\wedge\alpha_\frak{m}^+]
 =0=[\alpha_\frak{m}^-\wedge\alpha_\frak{m}^-]$ 
from \eqref{eq-2.2.13}. 
   So Lemma \ref{lem-2.2.5} allows us to obtain the conclusion, if one has 
$\partial_-\alpha_\frak{m}^++[\alpha_\frak{h}^-\wedge\alpha_\frak{m}^+]=0$. 
   But, this equation is immediate from 
$d\alpha_\frak{m}^++[\alpha_\frak{h}\wedge\alpha_\frak{m}^+]=0$. 
\end{proof}

\subsubsection{}\label{subsec-2.2.3}
   We recall the notion of the extended framing of a para-pluriharmonic map 
(cf.\ Definition \ref{def-2.2.6}). 
   One will see that the framing is an element of the loop group 
$\widetilde{\Lambda}G_\sigma$ in Section \ref{sec-3}.  
 
   Let $G^\mathbb{C}$ be a simply connected, simple, complex linear algebraic 
subgroup of $SL(m,\mathbb{C})$, let $\sigma$ be a holomorphic involution of 
$G^\mathbb{C}$, and let $\nu$ be an antiholomorphic involution of 
$G^\mathbb{C}$ such that $[\sigma,\nu]=0$ (i.e., 
$\sigma\circ\nu=\nu\circ\sigma$). 
   Define $H^\mathbb{C}$, $G$ and $H$ by 
\begin{equation}\label{eq-2.2.14}
\begin{array}{lll} 
   H^\mathbb{C}:=\operatorname{Fix}(G^\mathbb{C},\sigma), 
&  G:=\operatorname{Fix}(G^\mathbb{C},\nu), 
&  H:=\operatorname{Fix}(G,\sigma)=\operatorname{Fix}(H^\mathbb{C},\nu). 
\end{array}  
\end{equation}
   Note that $(G/H,\sigma|_G)$ is an affine symmetric space. 
   Now, let $p_o$ be a base point in a simply connected para-complex manifold 
$(M,I)$. 
   Then, Proposition \ref{prop-2.2.4} assures that for any para-pluriharmonic 
map $f=\pi\circ F:(M,I)\to (G/H,\nabla^1)$ with $F(p_o)=\operatorname{id}$ and 
$[\alpha_\frak{m}^\pm\wedge\alpha_\frak{m}^\pm]=0$, the 
$\frak{g}^\mathbb{C}$-valued $1$-form 
$\alpha^\mu=\alpha_\frak{h}+\mu^{-1}\cdot\alpha_\frak{m}^+
 +\mu\cdot\alpha_\frak{m}^-$ 
on $(M,I)$, parameterized by $\mu\in\mathbb{C}^*$, is integrable; and 
furthermore, one can obtain a smooth map 
\[
\begin{array}{ll}
 F:M\times\mathbb{C}^*\to G^\mathbb{C}, & (p,\mu)\mapsto F_\mu(p),
\end{array}
\]
from the integrability condition 
$d\alpha^\mu+(1/2)\cdot[\alpha^\mu\wedge\alpha^\mu]=0$ and 
$F_\mu^{-1}\cdot dF_\mu=\alpha^\mu$ with $F_\mu(p_o)\equiv\operatorname{id}$. 
   The above map $F=F_\mu:\mathbb{C}^*\to G^\mathbb{C}$ satisfies 
\begin{enumerate} 
\item[(2.2.15)] 
   $\sigma(F_\mu)=F_{-\mu}$ for all $\mu\in\mathbb{C}^*$, 
\item[(2.2.16)] 
   $F_\lambda:=F|_{S^1}:S^1\to G^\mathbb{C}$, where 
$S^1:=\{\lambda\in\mathbb{C}^*\,|\,|\lambda|=1\}$, 
\item[(2.2.17)] 
   $F_\theta:=F|_{\mathbb{R}^+}:\mathbb{R}^+\to 
G=\operatorname{Fix}(G^\mathbb{C},\nu)$ ($\subset G^\mathbb{C}$), 
where $\mathbb{R}^+:=\{\theta\in\mathbb{R}\,|\,\theta>0\}$.     
\end{enumerate} 
\setcounter{equation}{17}
   Indeed, (2.2.15) follows from $d\sigma(\alpha^\mu)=\alpha^{-\mu}$ and 
$\sigma(F_\mu(p_o))=F_{-\mu}(p_o)$; (2.2.16) is obvious; and (2.2.17) follows 
from $\alpha^\theta$ being $\frak{g}$-valued for any $\theta\in\mathbb{R}^+$. 

\begin{definition}\label{def-2.2.6} 
    The map $F_\theta$ is called the {\it extended framing} of the 
para-pluriharmonic map $f=\pi\circ F:(M,I)\to(G/H,\nabla^1)$; and 
$\{f_\theta\}_{\theta\in\mathbb{R}^+}$ is called an {\it associated family} of 
$f$, where $f_\theta:=\pi\circ F_\theta$.   
   Here, we remark that $f_1=f$ and $F_1=F$ are immediate from 
$\alpha^1=\alpha$ and $F_1(p_o)=F(p_o)$.    
\end{definition}
   
\begin{remark}\label{rem-2.2.7}
   Throughout this paper we consider that for the extended framing $F_\theta$ 
of a para-pluriharmonic map, its variable $\theta$ varies in the whole 
$\mathbb{C}^*$ which contains not only $\mathbb{R}^+$ but also $S^1$.       
\end{remark}

\subsection{Pluriharmonic maps}\label{subsec-2.3} 
\subsubsection{}\label{subsec-2.3.1} 
   In this subsection we will survey some basic facts and results about 
pluriharmonic maps. 
   First, let us recall the notion of a pluriharmonic map:  
\begin{definition}\label{def-2.3.1}
   Let $(M,J)$ be a real $2n$-dimensional complex manifold, and let $N$ be a 
smooth manifold with a torsion-free affine connection $\nabla^N$. 
   Then a smooth map $f:(M,J)\to(N,\nabla^N)$ is called {\it pluriharmonic}, 
if it satisfies 
\begin{equation}\label{H}\tag{H}
  (\nabla df)\bigl(\frac{\partial}{\partial \bar{z}^a}, 
   \frac{\partial}{\partial z^b}\bigr)=0 
\quad \mbox{for all $1\leq a,b\leq n$},     
\end{equation}
for any local holomorphic coordinate 
$(z^1,\cdots,z^n,\bar{z}^1,\cdots,\bar{z}^n)$ on $(M,J)$.    
   Here $\nabla$ denotes the connection on $\operatorname{End}(TM,f^{-1}TN)$ 
which is induced from $D$ and $\nabla^N$, where $D$ is any complex torsion-free 
affine connection on $(M,J)$. 
\end{definition} 

\begin{remark}\label{rem-2.3.2}
   (i) We utilize the terminology ``pluriharmonic map,'' in a sense that is 
more general than the one originally given by Siu \cite{Si}. \par 
   (ii) Any complex manifold admits a complex torsion-free affine connection 
(cf.\ \cite[p.\ 145]{Ko-No}).\par 
   (iii) The equation \eqref{H} in Definition \ref{def-2.3.1} is independent 
of the choice of complex torsion-free affine connections $D$ on $(M,J)$ (ref.\ 
the proof of Lemma \ref{lem-2.2.3}).        
\end{remark}

\subsubsection{}\label{subsec-2.3.2} 
   In Section \ref{sec-3} we will study the relation between pluriharmonic maps 
and the loop group method. 
   For this we will use a result of Ohnita \cite{Oh} about pluriharmonic maps   
(see Proposition \ref{prop-2.3.3}). 
   First, let us fix the setting for Proposition \ref{prop-2.3.3}.\par 
   
   Let $(G/H,\sigma)$ denote the affine symmetric space defined in 
Subsection \ref{subsec-2.2.2}, and let $F$ be a smooth map from a real 
$2n$-dimensional complex manifold $(M,J)$ into $G$. 
   Then, we consider:      
\begin{enumerate}[(2.3.1)] 
\item 
  $\pi$: the same as in (2.2.1),  
\item 
  $\nabla^1$: the same as in (2.2.2),
\item 
  $\alpha$: the same as in (2.2.3), 
\item 
 $\frak{g}$, $\frak{h}$, $\frak{m}$: the same as in (2.2.4),
\item 
 $\alpha_\frak{h}$, $\alpha_\frak{m}$: the same as in (2.2.5),  
\item 
 $\alpha_X':=(-i/2)\cdot(i\alpha_X+{}^tJ(\alpha_X))$,
\quad 
 $\alpha_X'':=(-i/2)\cdot(i\alpha_X-{}^tJ(\alpha_X))$ for 
$X=\frak{h}$, $\frak{m}$,  
\item 
 $\overline{\partial}\alpha_\frak{m}'
 +[\alpha_\frak{h}''\wedge\alpha_\frak{m}']=0$ 
as an abbreviation for  
$\overline{\partial}^a(\alpha_\frak{m}(\partial^b))
  +[\alpha_\frak{h}(\overline{\partial}^a),\alpha_\frak{m}(\partial^b)]=0$ 
for all $1\leq a,b\leq n$, where $(z^1,\cdots,z^n,\bar{z}^1,\cdots,\bar{z}^n)$ 
is any local holomorphic coordinate system on $(M,J)$, and where 
$\partial^b:=\partial/\partial z^b$ and 
$\overline{\partial}^a:=\partial/\partial\bar{z}^a$, 
\item
 $[\alpha_\frak{m}'\wedge\alpha_\frak{m}']=0$ as an abbreviation for 
$[\alpha_\frak{m}(\partial^a),\alpha_\frak{m}(\partial^b)]=0$ 
for all $1\leq a,b\leq n$, where $(z^1,\cdots,z^n,\bar{z}^1,\cdots,\bar{z}^n)$ 
is any local holomorphic coordinate system on $(M,J)$.         
\end{enumerate}
\setcounter{equation}{8}    

\begin{proposition}[{cf.\ Ohnita \cite{Oh}}]\label{prop-2.3.3}
   With the above notation,  a map $f:=\pi\circ F:(M,J)\to(G/H,\nabla^1)$ 
is pluriharmonic if and only if 
$\overline{\partial}\alpha_\frak{m}'
 +[\alpha_\frak{h}''\wedge\alpha_\frak{m}']=0$.  
   Moreover, the following statements {\rm (a)} and {\rm (b)} are equivalent$:$ 
\begin{enumerate}
\item[{\rm (a)}] 
   $f=\pi\circ F$ is pluriharmonic and satisfies 
$[\alpha_\frak{m}'\wedge\alpha_\frak{m}']=0;$ 
\item[{\rm (b)}] 
 $d\alpha^\mu+(1/2)\cdot[\alpha^\mu\wedge\alpha^\mu]=0$ 
for any $\mu\in\mathbb{C}^*$, where 
$\alpha^\mu:=\alpha_\frak{h}+\mu^{-1}\cdot\alpha_\frak{m}'
  +\mu\cdot\alpha_\frak{m}''$. 
\end{enumerate}     
\end{proposition}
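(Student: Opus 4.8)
The plan is to follow the proof of Proposition \ref{prop-2.2.4} almost verbatim, replacing the para-complex eigenbundle splitting $\alpha_\frak{m}=\alpha_\frak{m}^++\alpha_\frak{m}^-$ by the splitting $\alpha_\frak{m}=\alpha_\frak{m}'+\alpha_\frak{m}''$ into its $(1,0)$- and $(0,1)$-parts. First I would dispose of the pluriharmonicity characterization: exactly as in Lemma \ref{lem-2.2.5}, using that $\nabla^1$ is the canonical affine connection of the second kind together with $f^*\beta=\operatorname{Ad}F\cdot\alpha_\frak{m}$, one computes $\beta\bigl(\nabla^1_{\overline{\partial}^a}\bigl(df(\partial^b)\bigr)\bigr)=\operatorname{Ad}F\cdot\{\overline{\partial}^a(\alpha_\frak{m}(\partial^b))+[\alpha_\frak{h}(\overline{\partial}^a),\alpha_\frak{m}(\partial^b)]\}$, whence \eqref{H} is seen to be equivalent to $\overline{\partial}\alpha_\frak{m}'+[\alpha_\frak{h}''\wedge\alpha_\frak{m}']=0$; alternatively this first assertion is quoted directly from Ohnita \cite{Oh}.

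Next I would rewrite $d\alpha^\mu+(1/2)\cdot[\alpha^\mu\wedge\alpha^\mu]$ for $\alpha^\mu=\alpha_\frak{h}+\mu^{-1}\cdot\alpha_\frak{m}'+\mu\cdot\alpha_\frak{m}''$. Starting from $d\alpha+(1/2)\cdot[\alpha\wedge\alpha]=0$ and the relations $[\frak{h},\frak{h}]\subset\frak{h}$, $[\frak{h},\frak{m}]\subset\frak{m}$, $[\frak{m},\frak{m}]\subset\frak{h}$, one obtains, as in \eqref{eq-2.2.12}, the identities $d\alpha_\frak{h}+(1/2)\cdot[\alpha_\frak{h}\wedge\alpha_\frak{h}]+[\alpha_\frak{m}'\wedge\alpha_\frak{m}'']=-(1/2)\cdot([\alpha_\frak{m}'\wedge\alpha_\frak{m}']+[\alpha_\frak{m}''\wedge\alpha_\frak{m}''])$ and $d\alpha_\frak{m}+[\alpha_\frak{h}\wedge\alpha_\frak{m}]=0$. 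Expanding $[\alpha^\mu\wedge\alpha^\mu]$, using that $[\beta\wedge\gamma]$ is symmetric for $1$-forms, and substituting, I expect the exact analog of \eqref{eq-2.2.13}, namely $d\alpha^\mu+(1/2)\cdot[\alpha^\mu\wedge\alpha^\mu]=\mu^{-1}\cdot(d\alpha_\frak{m}'+[\alpha_\frak{h}\wedge\alpha_\frak{m}'])+\mu\cdot(d\alpha_\frak{m}''+[\alpha_\frak{h}\wedge\alpha_\frak{m}''])+(1/2)\cdot(\mu^{-2}-1)\cdot[\alpha_\frak{m}'\wedge\alpha_\frak{m}']+(1/2)\cdot(\mu^{2}-1)\cdot[\alpha_\frak{m}''\wedge\alpha_\frak{m}'']$.

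With this in hand the two implications run parallel to those of Proposition \ref{prop-2.2.4}. For (a)$\to$(b): since $[\alpha_\frak{m}'\wedge\alpha_\frak{m}']=0$ and, by complex conjugation (note $\overline{\alpha_\frak{m}'}=\alpha_\frak{m}''$ because $\alpha$ is $\frak{g}$-valued and $J$ is real), also $[\alpha_\frak{m}''\wedge\alpha_\frak{m}'']=0$, the $\mu^{\pm2}$-terms drop out. It then remains to annihilate the $\mu^{\pm1}$-terms, which I would do by a bidegree decomposition: $d\alpha_\frak{m}'+[\alpha_\frak{h}\wedge\alpha_\frak{m}']$ has only a $(2,0)$- and a $(1,1)$-part, the $(1,1)$-part being precisely the pluriharmonicity condition $\overline{\partial}\alpha_\frak{m}'+[\alpha_\frak{h}''\wedge\alpha_\frak{m}']=0$, while the $(2,0)$-part $\partial\alpha_\frak{m}'+[\alpha_\frak{h}'\wedge\alpha_\frak{m}']=0$ is the $(2,0)$-component of $d\alpha_\frak{m}+[\alpha_\frak{h}\wedge\alpha_\frak{m}]=0$; hence the $\mu^{-1}$-coefficient vanishes, and the $\mu$-coefficient vanishes by the conjugate argument. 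For (b)$\to$(a): viewing the displayed identity as a Laurent polynomial in $\mu$ vanishing on all of $\mathbb{C}^*$, every coefficient must vanish; the $\mu^{\pm2}$-coefficients give $[\alpha_\frak{m}'\wedge\alpha_\frak{m}']=0=[\alpha_\frak{m}''\wedge\alpha_\frak{m}'']$, and the $(1,1)$-part of the vanishing $\mu^{-1}$-coefficient is exactly pluriharmonicity.

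The routine algebra aside, the one genuine subtlety, and the step I expect to be the main obstacle, is organizing the bidegree bookkeeping correctly: one must verify that $\alpha_\frak{m}'$ is of type $(1,0)$ and $\alpha_\frak{m}''$ of type $(0,1)$ (which follows from $\alpha_\frak{m}'(\overline{\partial}^a)=0$ and $\alpha_\frak{m}'(\partial^b)=\alpha_\frak{m}(\partial^b)$), and then that the full first-order equation $d\alpha_\frak{m}'+[\alpha_\frak{h}\wedge\alpha_\frak{m}']=0$ splits cleanly into its $(2,0)$- and $(1,1)$-parts, the former being supplied by the Maurer--Cartan identity and the latter by pluriharmonicity. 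This is the exact complex counterpart of the symmetry-in-$(y^a,x^b)$ argument used in Proposition \ref{prop-2.2.4}, with complex conjugation playing the role that the $+\leftrightarrow-$ symmetry plays in the para-complex setting.
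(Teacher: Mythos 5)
Your proof is correct, but note that the paper itself does not prove Proposition \ref{prop-2.3.3} at all: it is simply quoted from Ohnita \cite{Oh}, and the only argument the authors write out is for the para-complex analogue (Lemma \ref{lem-2.2.5} and Proposition \ref{prop-2.2.4}). What you have done is supply the missing complex-case proof by transporting that argument, and the transport is sound. The two points where the translation is not purely mechanical are exactly the ones you flag: (i) the identification of $\alpha_\frak{m}'$ and $\alpha_\frak{m}''$ as the $(1,0)$- and $(0,1)$-parts of $\alpha_\frak{m}$ (which follows from the definitions in (2.3.6), since $\alpha_X'=(1/2)(\alpha_X-i\,{}^tJ(\alpha_X))$ kills $\overline{\partial}{}^a$ and restricts to $\alpha_X$ on $\partial^b$), and (ii) the replacement of the $x^a\leftrightarrow y^b$ symmetry of \eqref{P} by complex conjugation, using that $\alpha_\frak{h}$ and $\alpha_\frak{m}$ are $\frak{g}$-valued so that $\overline{\alpha_\frak{m}'}=\alpha_\frak{m}''$ and the $\mu$-coefficient is the conjugate of the $\mu^{-1}$-coefficient. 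Your bidegree bookkeeping also checks out: the $(1,1)$-part of $d\alpha_\frak{m}'+[\alpha_\frak{h}\wedge\alpha_\frak{m}']$ evaluated on $(\overline{\partial}{}^a,\partial^b)$ is exactly the expression in (2.3.7), while the $(2,0)$-part $\partial\alpha_\frak{m}'+[\alpha_\frak{h}'\wedge\alpha_\frak{m}']$ is the $(2,0)$-component of the Maurer--Cartan identity $d\alpha_\frak{m}+[\alpha_\frak{h}\wedge\alpha_\frak{m}]=0$ and hence vanishes automatically. The analogue of \eqref{eq-2.2.13} that you derive is correct, and the Laurent-coefficient argument for (b)$\to$(a) goes through verbatim. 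In short: the paper buys the statement by citation; your route buys it by a self-contained computation parallel to Subsection \ref{subsec-2.2.2}, at the modest cost of the conjugation and bidegree checks you identify.
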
 

\subsubsection{}\label{subsec-2.3.3} 
   We will first recall the notion of the extended framing of a pluriharmonic 
map (cf.\ Definition \ref{def-2.3.4}), and afterwards point out a crucial 
difference between the extended framings of pluriharmonic maps and 
para-pluriharmonic maps in view of the loop group method (cf.\ Remark 
\ref{rem-2.3.5}).\par

   The arguments below will be similar to those in Subsection 
\ref{subsec-2.2.3}. 
   Let $G^\mathbb{C}$, $H^\mathbb{C}$, $G$ and $H$ denote the same Lie groups 
as in \eqref{eq-2.2.14}. 
   Fix a base point $p_o$ in a simply connected complex manifold $(M,J)$. 
   For a pluriharmonic map $f=\pi\circ F:(M,J)\to(G/H,\nabla^1)$ with 
$F(p_o)=\operatorname{id}$ and $[\alpha_\frak{m}'\wedge\alpha_\frak{m}']=0$, 
Proposition \ref{prop-2.3.3} shows that the $\frak{g}^\mathbb{C}$-valued 
$1$-form 
$\alpha^\mu=\alpha_\frak{h}+\mu^{-1}\cdot\alpha_\frak{m}'
  +\mu\cdot\alpha_\frak{m}''$ 
on $(M,J)$ parameterized by $\mu\in\mathbb{C}^*$ is integrable. 
   Then there exists a unique map 
\[ 
\begin{array}{ll} 
  F:M\times\mathbb{C}^*\to G^\mathbb{C},  
& (p,\mu)\mapsto F_\mu(p),
\end{array}
\]
such that $F_\mu^{-1}\cdot dF_\mu=\alpha^\mu$ and 
$F_\mu(p_o)\equiv\operatorname{id}$, by virtue of the integrability condition 
$d\alpha^\mu+(1/2)\cdot[\alpha^\mu\wedge\alpha^\mu]=0$.   
   Here we remark that $F=F_\mu:\mathbb{C}^*\to G^\mathbb{C}$ satisfies 
\begin{enumerate}
\item[(2.3.9)] 
   $\sigma(F_\mu)=F_{-\mu}$ for all $\mu\in\mathbb{C}^*$, 
\item[(2.3.10)]  
   $F_\lambda:=F|_{S^1}:S^1\to G=\operatorname{Fix}(G^\mathbb{C},\nu)$ 
($\subset G^\mathbb{C}$).     
\end{enumerate} 
\setcounter{equation}{10}
   Indeed, (2.3.10) follows from $\alpha^\lambda$ being $\frak{g}$-valued for 
any $\lambda\in S^1$.
\begin{definition}\label{def-2.3.4} 
    The map $F_\lambda$ is called the {\it extended framing} of the 
pluriharmonic map $f=\pi\circ F:(M,J)\to(G/H,\nabla^1)$; and 
$\{f_\lambda\}_{\lambda\in S^1}$ is called an {\it associated family} of $f$, 
where $f_\lambda(p):=\pi\circ F_\lambda(p)$ for $(p,\lambda)\in M\times S^1$.    
\end{definition}

\begin{remark}\label{rem-2.3.5}
   The map $F=F_\mu:\mathbb{C}^*\to G^\mathbb{C}$ defined above becomes 
$G$-valued if its variable $\mu$ varies in $S^1$; and $F_\lambda=F|_{S^1}$ is 
the extended framing of a pluriharmonic map. 
   By contrast, the map $F=F_\mu:\mathbb{C}^*\to G^\mathbb{C}$ in 
Subsection \ref{subsec-2.2.3} becomes $G$-valued if its variable $\mu$ varies 
in $\mathbb{R}^+$; and $F_\theta=F|_{\mathbb{R}^+}$ is the extended framing of 
a para-pluriharmonic map.     
\end{remark}

\section{The loop group method}\label{sec-3}
   First, we introduce three kinds of loop groups $\Lambda G^\mathbb{C}_\sigma$, 
$\Lambda G_\sigma$ and $\widetilde{\Lambda} G_\sigma$, and review their 
decomposition theorems. 
   Next, we explain the relation between para-pluriharmonic maps and the loop 
group method, and interrelate para-pluriharmonic maps with para-pluriharmonic 
potentials. 
   Finally, we treat the pluriharmonic case.

\subsection{Decomposition theorems of loop groups}\label{subsec-3.1}
\subsubsection{}\label{subsec-3.1.1}  
   Let $G^\mathbb{C}$ be a simply connected, simple, complex linear algebraic 
subgroup of $SL(m,\mathbb{C})$, and let $\sigma$ be a holomorphic involution 
of $G^\mathbb{C}$.  
   In this case the twisted loop group $\Lambda G^\mathbb{C}_\sigma$ is 
defined as follows: 
\[
\Lambda G^\mathbb{C}_\sigma:=\left\{
  \begin{array}{@{\,}l|r@{\,}}  
  A_\lambda:S^1\to G^\mathbb{C} 
  & A_\lambda=\sum_{k\in\mathbb{Z}}A_k\lambda^k, \, 
    \sum||A_k||<\infty,\\
  & \mbox{$\sigma(A_\lambda)=A_{-\lambda}$ for all 
          $\lambda\in S^1$} 
  \end{array}\right\}, 
\]
where $||\cdot||$ denotes some matrix norm satisfying 
$||A\cdot B||\leq||A||\cdot||B||$ and $||\operatorname{id}||=1$. 
   Then $\Lambda G^\mathbb{C}_\sigma$, with this norm 
$||A_\lambda||=\sum||A_k||$,  is a complex Banach Lie group (see \cite{Ba-Do}, 
\cite{Go-Wa} and \cite{Pr-Se} for more details). 
  Here, the Lie algebra $\Lambda\frak{g}^\mathbb{C}_\sigma$ of 
$\Lambda G^\mathbb{C}_\sigma$ is given by 
\begin{equation}\label{eq-3.1.1}
\Lambda\frak{g}^\mathbb{C}_\sigma
:=\left\{
  \begin{array}{@{\,}l|r@{\,}}  
  X_\lambda:S^1\to\frak{g}^\mathbb{C} 
  & X_\lambda=\sum_{k\in\mathbb{Z}}X_k\lambda^k, \, 
    \sum||X_k||<\infty,\\
  & \mbox{$d\sigma(X_\lambda)=X_{-\lambda}$ for all 
          $\lambda\in S^1$} 
  \end{array}\right\}.   
\end{equation}
   Define four subgroups $\Lambda^\pm G^\mathbb{C}_\sigma$ and 
$\Lambda^\pm_* G^\mathbb{C}_\sigma$ of $\Lambda G^\mathbb{C}_\sigma$ by 
\[
\begin{array}{l}
\Lambda^\pm G^\mathbb{C}_\sigma
 :=\{ A_\lambda\in\Lambda G^\mathbb{C}_\sigma \,|\, 
     \mbox{$A_\lambda$ has a holomorphic extension 
      $\widehat{A}_z:\mathbb{D}_\pm\to G^\mathbb{C}$} \},\\
\Lambda^+_* G^\mathbb{C}_\sigma
  :=\{ A_\lambda\in\Lambda^+ G^\mathbb{C}_\sigma \,|\, 
       \widehat{A}_0=\operatorname{id} \},\quad  
\Lambda^-_* G^\mathbb{C}_\sigma
  :=\{ A_\lambda\in\Lambda^- G^\mathbb{C}_\sigma \,|\, 
       \widehat{A}_\infty=\operatorname{id} \},        
\end{array}
\]
where $\mathbb{D}_+:=\{z\in\mathbb{C}\,|\,|z|<1\}$ and 
$\mathbb{D}_-:=\{z\in\mathbb{C}\,|\,|z|>1\}\cup\{\infty\}$.   
      With this notation, we can state the following two Theorems 
\ref{thm-3.1.1} and \ref{thm-3.1.2}, which are called the {\it Iwasawa 
decomposition} of 
$\Lambda G^\mathbb{C}_\sigma\times\Lambda G^\mathbb{C}_\sigma$ and the 
{\it Birkhoff decomposition} of $\Lambda G^\mathbb{C}_\sigma$, respectively 
(see \cite{Ba-Do}, \cite{Go-Wa}, \cite{Pr-Se}): 
\begin{theorem}[Iwasawa decomposition of 
$\Lambda G^\mathbb{C}_\sigma\times\Lambda G^\mathbb{C}_\sigma$]
\label{thm-3.1.1} 
   The multiplication maps 
\[
\begin{array}{l}
 \triangle(\Lambda G^\mathbb{C}_\sigma
   \times\Lambda G^\mathbb{C}_\sigma)
 \times 
 (\Lambda^-_* G^\mathbb{C}_\sigma\times
  \Lambda^+ G^\mathbb{C}_\sigma)  
 \to 
 \Lambda G^\mathbb{C}_\sigma
  \times\Lambda G^\mathbb{C}_\sigma,\\
 \triangle(\Lambda G^\mathbb{C}_\sigma
   \times\Lambda G^\mathbb{C}_\sigma)
 \times 
 (\Lambda^+_* G^\mathbb{C}_\sigma\times
  \Lambda^- G^\mathbb{C}_\sigma)  
 \to 
 \Lambda G^\mathbb{C}_\sigma\times\Lambda G^\mathbb{C}_\sigma
\end{array} 
\] 
are holomorphic diffeomorphisms onto open subsets of 
$\Lambda G^\mathbb{C}_\sigma\times\Lambda G^\mathbb{C}_\sigma$, 
respectively. 
   Here 
$\triangle(\Lambda G^\mathbb{C}_\sigma\times\Lambda G^\mathbb{C}_\sigma)$ 
denotes the diagonal subgroup of 
$\Lambda G^\mathbb{C}_\sigma\times\Lambda G^\mathbb{C}_\sigma$.   
\end{theorem}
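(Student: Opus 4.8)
The plan is to reduce this doubled decomposition to the ordinary Birkhoff decomposition of a single loop group (Theorem~\ref{thm-3.1.2}; see also \cite{Pr-Se}). The guiding observation is that the diagonal subgroup $\triangle(\Lambda G^\mathbb{C}_\sigma\times\Lambda G^\mathbb{C}_\sigma)$ acts on the product by left translation, $(A,A)\cdot(g_1,g_2)=(Ag_1,Ag_2)$, and that the quantity $g_1^{-1}g_2$ is a complete invariant of this action. I would therefore study the holomorphic map $\Phi(g_1,g_2):=g_1^{-1}g_2$ from $\Lambda G^\mathbb{C}_\sigma\times\Lambda G^\mathbb{C}_\sigma$ to $\Lambda G^\mathbb{C}_\sigma$ and transport the Birkhoff splitting back through $\Phi$.

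For the first multiplication map, suppose $(g_1,g_2)$ is such that $h:=g_1^{-1}g_2$ lies in the open Birkhoff cell, so that $h=C^-C^+$ with $C^-\in\Lambda^-_* G^\mathbb{C}_\sigma$ and $C^+\in\Lambda^+ G^\mathbb{C}_\sigma$ uniquely determined. Setting $A:=g_1 C^-$, $B^-:=(C^-)^{-1}$ and $B^+:=C^+$, one checks directly that $AB^-=g_1$ and $AB^+=g_1 C^-C^+=g_1 h=g_2$, so that $((A,A),(B^-,B^+))$ is a preimage of $(g_1,g_2)$; here $(C^-)^{-1}\in\Lambda^-_* G^\mathbb{C}_\sigma$ because $\widehat{C^-}_\infty=\operatorname{id}$. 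Conversely, any decomposition $(g_1,g_2)=(AB^-,AB^+)$ forces $h=(B^-)^{-1}B^+$ to be a Birkhoff factorization of $h$, and uniqueness of the latter yields uniqueness of $(A,A)$ and $(B^-,B^+)$. Hence the map is injective with the explicit inverse just produced.

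It then remains to check the three qualifying properties. The image is precisely $\Phi^{-1}$ of the big cell, which is open since the big cell is open and $\Phi$ is continuous; this gives surjectivity onto the asserted open subset. Holomorphicity of the inverse follows from the holomorphy of the Birkhoff factorization $h\mapsto(C^-,C^+)$ together with that of multiplication and inversion in the complex Banach Lie group $\Lambda G^\mathbb{C}_\sigma$, and the forward multiplication is holomorphic for the same reasons; thus the map is a biholomorphism onto its open image. The second multiplication map is treated identically, now using the opposite Birkhoff cell $\Lambda^+_* G^\mathbb{C}_\sigma\cdot\Lambda^- G^\mathbb{C}_\sigma$ and, from $h=C^+C^-$, setting $A:=g_1 C^+$ together with the factors $(C^+)^{-1}\in\Lambda^+_* G^\mathbb{C}_\sigma$ and $C^-\in\Lambda^- G^\mathbb{C}_\sigma$.

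The one point deserving genuine care, and the step I expect to be the main obstacle, is compatibility with the $\sigma$-twisting: I must ensure that every element produced---$A$, the two $B$-factors, and especially the inverses $(C^\mp)^{-1}$---again satisfies the twisting condition $\sigma(\,\cdot\,)_\lambda=(\,\cdot\,)_{-\lambda}$ and belongs to the correct twisted subgroup. This becomes automatic once the Birkhoff decomposition is carried out inside the twisted groups $\Lambda^\pm_* G^\mathbb{C}_\sigma$ and $\Lambda^\pm G^\mathbb{C}_\sigma$, which are closed under the products and inversions used above; the remaining verifications are then routine bookkeeping.
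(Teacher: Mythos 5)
The paper does not prove Theorem \ref{thm-3.1.1} at all: it is imported from the literature (\cite{Ba-Do}, \cite{Go-Wa}, \cite{Pr-Se}) alongside the Birkhoff decomposition, so there is no in-paper argument to compare against. Your reduction is nevertheless correct and is essentially the standard way this ``doubled'' Iwasawa decomposition is established: the map $\Phi(g_1,g_2)=g_1^{-1}g_2$ is a complete invariant for the left action of the diagonal, the image of the first multiplication map is exactly $\Phi^{-1}(\mathcal{B}_-^\mathbb{C})$ with $\mathcal{B}_-^\mathbb{C}=\Lambda^-_*G^\mathbb{C}_\sigma\cdot\Lambda^+G^\mathbb{C}_\sigma$ the big cell of Theorem \ref{thm-3.1.2}, and the explicit section $A=g_1C^-$, $B^-=(C^-)^{-1}$, $B^+=C^+$ together with uniqueness of the Birkhoff factorization gives bijectivity; openness and holomorphy are inherited from the big cell and from the holomorphy of the Birkhoff splitting and of the group operations in the Banach Lie group $\Lambda G^\mathbb{C}_\sigma$. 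Your closing worry about the $\sigma$-twisting is indeed only bookkeeping, since all factors are manufactured inside the twisted subgroups, which are closed under the products and inversions you use (for $G^\mathbb{C}\subset SL(m,\mathbb{C})$ the inverse of a map extending holomorphically to $\mathbb{D}_\mp$ with value $\operatorname{id}$ at $\infty$ resp.\ $0$ again has this property). The one thing to make explicit, if this were written out in full, is that you are taking Theorem \ref{thm-3.1.2} as the given input; since the paper also treats that theorem as known, there is no circularity, and your argument in fact exhibits the doubled Iwasawa decomposition as formally equivalent to the single-group Birkhoff decomposition.
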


\begin{theorem}[Birkhoff decomposition of $\Lambda G^\mathbb{C}_\sigma$]
\label{thm-3.1.2}  
   The multiplication maps 
\[
\begin{array}{ll}
 \Lambda^-_*G^\mathbb{C}_\sigma\times\Lambda^+G^\mathbb{C}_\sigma 
 \to\Lambda G^\mathbb{C}_\sigma, 
& 
 \Lambda^+_*G^\mathbb{C}_\sigma\times\Lambda^-G^\mathbb{C}_\sigma 
 \to\Lambda G^\mathbb{C}_\sigma
\end{array}
\] 
are holomorphic diffeomorphisms onto the open subsets 
$\mathcal{B}_\mp^\mathbb{C}:=
 \Lambda^\mp_*G^\mathbb{C}_\sigma\cdot\Lambda^\pm G^\mathbb{C}_\sigma$ 
of $\Lambda G^\mathbb{C}_\sigma$, respectively. 
   In particular, each element 
$A_\lambda\in\mathcal{B}^\mathbb{C}:=\mathcal{B}_-^\mathbb{C}
 \cap\mathcal{B}_+^\mathbb{C}$ 
can be uniquely factorized$:$  
\[
\begin{array}{lrr}
  A_\lambda=A^-_\lambda\cdot B^+_\lambda=A^+_\lambda\cdot B^-_\lambda, 
& A^\pm_\lambda\in\Lambda^\pm_* G^\mathbb{C}_\sigma, 
& B^\pm_\lambda\in\Lambda^\pm G^\mathbb{C}_\sigma.   
\end{array}
\]    
\end{theorem}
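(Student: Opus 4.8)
The plan is to establish both statements by the standard loop-group factorization technique, carrying out the details for the first multiplication map $\Lambda^-_* G^\mathbb{C}_\sigma\times\Lambda^+ G^\mathbb{C}_\sigma\to\Lambda G^\mathbb{C}_\sigma$; the second map is handled verbatim by interchanging the roles of $\mathbb{D}_+$ and $\mathbb{D}_-$. First I would record the underlying Lie-algebra splitting $\Lambda\frak{g}^\mathbb{C}_\sigma=\Lambda^-_*\frak{g}^\mathbb{C}_\sigma\oplus\Lambda^+\frak{g}^\mathbb{C}_\sigma$ as a topological direct sum of Banach spaces. Given $X_\lambda=\sum_{k\in\mathbb{Z}}X_k\lambda^k\in\Lambda\frak{g}^\mathbb{C}_\sigma$, one sets $X_\lambda^-:=\sum_{k<0}X_k\lambda^k$ and $X_\lambda^+:=\sum_{k\geq 0}X_k\lambda^k$. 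Because $\lambda\mapsto-\lambda$ sends $\lambda^k$ to $(-1)^k\lambda^k$, each of these two projections preserves the twisting condition $d\sigma(X_\lambda)=X_{-\lambda}$ from \eqref{eq-3.1.1}, and each is continuous for the norm $\sum\|X_k\|$. Thus $X_\lambda^+$ lies in the Lie algebra of $\Lambda^+G^\mathbb{C}_\sigma$ and $X_\lambda^-$ in that of $\Lambda^-_*G^\mathbb{C}_\sigma$, and the two projections are bounded and complementary.

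Next I would prove the uniqueness of the factorization, equivalently the injectivity of the multiplication map. Suppose $A^-_\lambda\cdot B^+_\lambda=\tilde A^-_\lambda\cdot\tilde B^+_\lambda$ with $A^-_\lambda,\tilde A^-_\lambda\in\Lambda^-_*G^\mathbb{C}_\sigma$ and $B^+_\lambda,\tilde B^+_\lambda\in\Lambda^+G^\mathbb{C}_\sigma$. Then $(\tilde A^-_\lambda)^{-1}\cdot A^-_\lambda=\tilde B^+_\lambda\cdot(B^+_\lambda)^{-1}$, where the left-hand side extends holomorphically to $\mathbb{D}_-$ with value $\operatorname{id}$ at $\infty$, while the right-hand side extends holomorphically to $\mathbb{D}_+$. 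Hence the common value extends to a holomorphic map from the Riemann sphere $\mathbb{C}\cup\{\infty\}$ into the affine group $G^\mathbb{C}\subset SL(m,\mathbb{C})$; since the source is compact, each matrix entry is a bounded entire function, so by Liouville's theorem the map is constant, and the normalization at $\infty$ forces it to equal $\operatorname{id}$. Therefore $A^-_\lambda=\tilde A^-_\lambda$ and $B^+_\lambda=\tilde B^+_\lambda$.

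To see that the multiplication is a holomorphic diffeomorphism onto an open set, I would compute its differential at the pair $(\operatorname{id},\operatorname{id})$. Identifying the relevant tangent spaces by left translation, this differential is the addition map $\Lambda^-_*\frak{g}^\mathbb{C}_\sigma\times\Lambda^+\frak{g}^\mathbb{C}_\sigma\to\Lambda\frak{g}^\mathbb{C}_\sigma$, $(X^-,X^+)\mapsto X^-+X^+$, which is precisely the topological isomorphism recorded in the first step. The holomorphic inverse function theorem for complex Banach Lie groups then furnishes a local holomorphic diffeomorphism near $(\operatorname{id},\operatorname{id})$, and composing with left and right translations (which are holomorphic diffeomorphisms of the groups involved) propagates this to every point of the image. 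Consequently $\mathcal{B}_-^\mathbb{C}=\Lambda^-_*G^\mathbb{C}_\sigma\cdot\Lambda^+G^\mathbb{C}_\sigma$ is open and the map is a holomorphic diffeomorphism onto it; the swapped argument yields the open set $\mathcal{B}_+^\mathbb{C}$, and on $\mathcal{B}^\mathbb{C}=\mathcal{B}_-^\mathbb{C}\cap\mathcal{B}_+^\mathbb{C}$ both unique factorizations coexist, which is the last assertion. The main obstacle is this final analytic step: one must verify that the loop-algebra splitting has bounded complementary projections, for which the absolute convergence built into the norm $\sum\|X_k\|$ is essential, and that the holomorphic inverse function theorem genuinely applies to the multiplication on these infinite-dimensional complex Banach Lie groups. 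These are exactly the technical facts established in the loop-group literature (cf.\ \cite{Ba-Do}, \cite{Go-Wa}, \cite{Pr-Se}), on which I would rely.
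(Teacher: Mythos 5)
The paper does not prove Theorem \ref{thm-3.1.2} at all; it quotes it as a known result from \cite{Ba-Do}, \cite{Go-Wa}, \cite{Pr-Se}. Your sketch is precisely the standard argument from those sources --- boundedness of the positive/negative Fourier-mode projections in the Wiener norm, uniqueness via gluing the two holomorphic extensions across $S^1$ into a map on the Riemann sphere and applying Liouville, and the Banach holomorphic inverse function theorem at $(\operatorname{id},\operatorname{id})$ propagated to arbitrary points by left and right translations --- and it is correct as an outline.
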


\subsubsection{Almost split real forms of $\Lambda G^\mathbb{C}_\sigma$}
\label{subsec-3.1.2} 
   Now, let $\nu$ be an antiholomorphic involution of $G^\mathbb{C}$ such that 
$[\sigma,\nu]=0$ (i.e., $\sigma\circ\nu=\nu\circ\sigma$). 
   Then one can define an antiholomorphic involution $\nu_S$ of 
$\Lambda G^\mathbb{C}_\sigma$ by setting 
\begin{equation}\label{eq-3.1.2}
\begin{array}{ll}
  \nu_S(A_\lambda):=\nu(A_{\overline{\lambda}}) 
& \mbox{for $A_\lambda\in\Lambda G^\mathbb{C}_\sigma$}. 
\end{array}   
\end{equation} 
   This involution $\nu_S$ is said to be of {\it the first kind}, and its 
fixed point set 
$\Lambda G_\sigma:=\operatorname{Fix}(\Lambda G^\mathbb{C}_\sigma,\nu_S)$ 
is called an {\it almost split real form} of $\Lambda G^\mathbb{C}_\sigma$. 
   Note that $\nu_S$ satisfies 
$\nu_S(\Lambda^\pm G^\mathbb{C}_\sigma)=\Lambda^\pm G^\mathbb{C}_\sigma$ and 
$\nu_S(\Lambda^\pm_* G^\mathbb{C}_\sigma)=\Lambda^\pm_* G^\mathbb{C}_\sigma$. 
   That allows us to define four subgroups $\Lambda^\pm G_\sigma$ and 
$\Lambda^\pm_*G_\sigma$ as follows:  
\[
\begin{array}{ll}
\Lambda^\pm G_\sigma
 :=\operatorname{Fix}(\Lambda^\pm G^\mathbb{C}_\sigma,\nu_S),  
&
\Lambda^\pm_* G_\sigma
 :=\operatorname{Fix}(\Lambda^\pm_* G^\mathbb{C}_\sigma,\nu_S). 
\end{array}
\]   
   With this notation, one can state the following theorems (see \cite{Br}, 
\cite{Br-Do}): 
\begin{theorem}[Iwasawa decomposition of 
$\Lambda G_\sigma\times\Lambda G_\sigma$]\label{thm-3.1.3} 
   The multiplication maps 
\[
\begin{array}{l}
 \triangle(\Lambda G_\sigma\times\Lambda G_\sigma)
 \times(\Lambda^-_* G_\sigma\times\Lambda^+ G_\sigma)  
 \to 
 \Lambda G_\sigma\times\Lambda G_\sigma,\\
 \triangle(\Lambda G_\sigma\times\Lambda G_\sigma)
 \times(\Lambda ^+_* G_\sigma\times\Lambda^- G_\sigma)  
 \to 
 \Lambda G_\sigma\times\Lambda G_\sigma
\end{array} 
\] 
are holomorphic diffeomorphisms onto open subsets of 
$\Lambda G_\sigma\times\Lambda G_\sigma$, respectively.    
\end{theorem}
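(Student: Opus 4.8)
The plan is to deduce Theorem~\ref{thm-3.1.3} from the complex Iwasawa decomposition of Theorem~\ref{thm-3.1.1} by passing to the fixed point set of the antiholomorphic involution $\nu_S$. Write $\Phi$ for either of the two multiplication maps in Theorem~\ref{thm-3.1.3}; say $\Phi$ is the first one,
\[
\Phi\bigl((g,g),(a_-,b_+)\bigr)=(g\cdot a_-,\,g\cdot b_+),
\]
which is the restriction of the corresponding map of Theorem~\ref{thm-3.1.1}, defined on $\triangle(\Lambda G^\mathbb{C}_\sigma\times\Lambda G^\mathbb{C}_\sigma)\times(\Lambda^-_*G^\mathbb{C}_\sigma\times\Lambda^+G^\mathbb{C}_\sigma)$ and, by Theorem~\ref{thm-3.1.1}, a holomorphic diffeomorphism onto an open subset $U$ of $\Lambda G^\mathbb{C}_\sigma\times\Lambda G^\mathbb{C}_\sigma$. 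The second map is handled identically after interchanging the roles of $\pm$.

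First I would transport $\nu_S$ to the relevant products. On $\Lambda G^\mathbb{C}_\sigma\times\Lambda G^\mathbb{C}_\sigma$ consider the antiholomorphic involution $\widehat{\nu}:=\nu_S\times\nu_S$, whose fixed point set is exactly $\Lambda G_\sigma\times\Lambda G_\sigma$ and which preserves the diagonal $\triangle(\Lambda G^\mathbb{C}_\sigma\times\Lambda G^\mathbb{C}_\sigma)$. Because $\nu_S(\Lambda^\pm G^\mathbb{C}_\sigma)=\Lambda^\pm G^\mathbb{C}_\sigma$ and $\nu_S(\Lambda^\pm_* G^\mathbb{C}_\sigma)=\Lambda^\pm_* G^\mathbb{C}_\sigma$ (recorded in Subsection~\ref{subsec-3.1.2}), the involution induced by $\nu_S$ also preserves the factor $\Lambda^-_*G^\mathbb{C}_\sigma\times\Lambda^+G^\mathbb{C}_\sigma$, and on the domain of $\Phi$ its fixed point set is precisely $\triangle(\Lambda G_\sigma\times\Lambda G_\sigma)\times(\Lambda^-_*G_\sigma\times\Lambda^+G_\sigma)$, using that the definitions $\Lambda^\pm G_\sigma=\operatorname{Fix}(\Lambda^\pm G^\mathbb{C}_\sigma,\nu_S)$ and $\Lambda^\pm_* G_\sigma=\operatorname{Fix}(\Lambda^\pm_* G^\mathbb{C}_\sigma,\nu_S)$ exhibit these as the corresponding fixed point sets. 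Since $\nu_S$ is a group homomorphism, $\Phi$ is equivariant:
\[
\Phi\bigl((\nu_S(g),\nu_S(g)),(\nu_S(a_-),\nu_S(b_+))\bigr)
=\bigl(\nu_S(g\cdot a_-),\nu_S(g\cdot b_+)\bigr)
=\widehat{\nu}\bigl(\Phi((g,g),(a_-,b_+))\bigr).
\]

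The equivariance is the whole point. Combined with the fact that $\Phi$ is a bijection onto $U$, it forces $\Phi$ to carry the fixed point set of the source involution bijectively onto $U\cap(\Lambda G_\sigma\times\Lambda G_\sigma)$: if $y\in U$ is $\widehat{\nu}$-fixed and $y=\Phi(x)$ with $x$ unique, then $\Phi(\widehat{\nu}(x))=\widehat{\nu}(y)=y=\Phi(x)$ gives $\widehat{\nu}(x)=x$ by injectivity. This identifies the restriction of $\Phi$ with the multiplication map of Theorem~\ref{thm-3.1.3}, and its image $U\cap(\Lambda G_\sigma\times\Lambda G_\sigma)$ is open in $\Lambda G_\sigma\times\Lambda G_\sigma$ because $U$ is open. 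Finally, $\Phi$ and $\Phi^{-1}$ are holomorphic, hence real analytic, and both send real forms to real forms, so their restrictions are mutually inverse real-analytic maps between the real fixed point sets, giving the desired diffeomorphism onto an open subset.

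I expect no serious obstacle here: the genuine analytic content is imported wholesale from Theorem~\ref{thm-3.1.1}, and what remains is the bookkeeping of fixed point sets together with the equivariance identity above, both of which rest only on $\nu_S$ being a group involution that stabilizes $\Lambda^\pm G^\mathbb{C}_\sigma$ and $\Lambda^\pm_* G^\mathbb{C}_\sigma$. The one point that merits care is the passage from the holomorphic category to the real form, namely checking that the fixed point set of $\widehat{\nu}$ in $U$ is a bona fide submanifold on which $\Phi$ restricts to a diffeomorphism; this is the standard behavior of a holomorphic diffeomorphism compatible with an antiholomorphic involution, and it is the step I would write out most carefully.
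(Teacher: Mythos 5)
Your argument is correct. The paper itself gives no proof of Theorem \ref{thm-3.1.3} --- it is quoted from the references \cite{Br} and \cite{Br-Do} --- and your reduction of the real-form statement to Theorem \ref{thm-3.1.1} via equivariance under $\nu_S\times\nu_S$ and passage to fixed point sets is essentially the argument used in those sources: the analytic content is the complex decomposition, and the facts $\nu_S(\Lambda^\pm G^\mathbb{C}_\sigma)=\Lambda^\pm G^\mathbb{C}_\sigma$, $\nu_S(\Lambda^\pm_* G^\mathbb{C}_\sigma)=\Lambda^\pm_* G^\mathbb{C}_\sigma$ recorded in Subsection \ref{subsec-3.1.2} are exactly what makes the bookkeeping close up. The one point you rightly flag --- that $\operatorname{Fix}(\nu_S)$ is a real-analytic Banach submanifold (a real form, with Lie algebra the fixed real subalgebra of $\Lambda\frak{g}^\mathbb{C}_\sigma$) so that the restricted maps are mutually inverse real-analytic diffeomorphisms --- is the only step needing care, and note that the conclusion is then ``real analytic'' rather than ``holomorphic'', as in the paper's own Theorem \ref{thm-3.1.5}.
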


\begin{theorem}[Birkhoff decomposition of $\Lambda G_\sigma$]\label{thm-3.1.4}  
   The multiplication maps 
\[
\begin{array}{ll}
 \Lambda^-_*G_\sigma\times\Lambda^+G_\sigma\to\Lambda G_\sigma, 
& 
 \Lambda^+_*G_\sigma\times\Lambda^-G_\sigma\to\Lambda G_\sigma
\end{array}
\] 
are holomorphic diffeomorphisms onto the open subsets 
$\mathcal{B}_\mp:=\Lambda^\mp_*G_\sigma\cdot\Lambda^\pm G_\sigma$ 
of $\Lambda G_\sigma$, respectively. 
   In particular, each element 
$A_\lambda\in\mathcal{B}:=\mathcal{B}_-\cap\mathcal{B}_+$ can be uniquely 
factorized$:$  
\[
\begin{array}{lrr}
  A_\lambda=A^-_\lambda\cdot B^+_\lambda=A^+_\lambda\cdot B^-_\lambda, 
& A^\pm_\lambda\in\Lambda^\pm_* G_\sigma, 
& B^\pm_\lambda\in\Lambda^\pm G_\sigma.   
\end{array}
\]    
\end{theorem}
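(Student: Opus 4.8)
The plan is to deduce Theorem \ref{thm-3.1.4} from the complex Birkhoff decomposition (Theorem \ref{thm-3.1.2}) by a fixed-point descent argument, exploiting that $\Lambda G_\sigma$, $\Lambda^\pm G_\sigma$ and $\Lambda^\pm_* G_\sigma$ are by definition the $\nu_S$-fixed point sets of their complexifications. Throughout I would argue only for the pair $(\Lambda^-_* G_\sigma,\Lambda^+ G_\sigma)$ and the image $\mathcal{B}_-$; the statement for $(\Lambda^+_* G_\sigma,\Lambda^- G_\sigma)$ and $\mathcal{B}_+$ follows verbatim after interchanging the roles of $+$ and $-$.

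First I would establish uniqueness of the factorization. Since $\Lambda^-_* G_\sigma\subset\Lambda^-_* G^\mathbb{C}_\sigma$ and $\Lambda^+ G_\sigma\subset\Lambda^+ G^\mathbb{C}_\sigma$, any $A_\lambda\in\mathcal{B}_-$ already lies in $\mathcal{B}^\mathbb{C}_-$, and any real factorization $A_\lambda=A^-_\lambda\cdot B^+_\lambda$ is in particular a complex one; hence its uniqueness is inherited from Theorem \ref{thm-3.1.2}. Consequently the multiplication map $\Lambda^-_* G_\sigma\times\Lambda^+ G_\sigma\to\Lambda G_\sigma$ is injective, and its inverse is the restriction of the inverse of the complex multiplication map furnished by Theorem \ref{thm-3.1.2}; it is therefore a diffeomorphism onto its image, once that image is shown to be open.

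The crux is to identify $\mathcal{B}_-$ with $\mathcal{B}^\mathbb{C}_-\cap\Lambda G_\sigma$, which is the step I expect to be the main obstacle. The inclusion $\mathcal{B}_-\subset\mathcal{B}^\mathbb{C}_-\cap\Lambda G_\sigma$ is immediate. For the reverse, take $A_\lambda\in\mathcal{B}^\mathbb{C}_-\cap\Lambda G_\sigma$ and write its unique complex factorization $A_\lambda=A^-_\lambda\cdot B^+_\lambda$ with $A^-_\lambda\in\Lambda^-_* G^\mathbb{C}_\sigma$ and $B^+_\lambda\in\Lambda^+ G^\mathbb{C}_\sigma$. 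Applying $\nu_S$ and using $\nu_S(A_\lambda)=A_\lambda$ together with the invariances $\nu_S(\Lambda^-_* G^\mathbb{C}_\sigma)=\Lambda^-_* G^\mathbb{C}_\sigma$ and $\nu_S(\Lambda^+ G^\mathbb{C}_\sigma)=\Lambda^+ G^\mathbb{C}_\sigma$ recorded after \eqref{eq-3.1.2}, I obtain a second complex factorization $A_\lambda=\nu_S(A^-_\lambda)\cdot\nu_S(B^+_\lambda)$ of the same type. Uniqueness in Theorem \ref{thm-3.1.2} then forces $\nu_S(A^-_\lambda)=A^-_\lambda$ and $\nu_S(B^+_\lambda)=B^+_\lambda$, so both factors are $\nu_S$-fixed, i.e.\ $A^-_\lambda\in\Lambda^-_* G_\sigma$ and $B^+_\lambda\in\Lambda^+ G_\sigma$, whence $A_\lambda\in\mathcal{B}_-$.

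Finally, since $\mathcal{B}^\mathbb{C}_-$ is open in $\Lambda G^\mathbb{C}_\sigma$ by Theorem \ref{thm-3.1.2} and $\Lambda G_\sigma$ carries the subspace topology, the identification $\mathcal{B}_-=\mathcal{B}^\mathbb{C}_-\cap\Lambda G_\sigma$ shows that $\mathcal{B}_-$ is open in $\Lambda G_\sigma$. Combined with the injectivity and the smoothness of the inverse from the first step, this proves the multiplication map is a diffeomorphism onto the open set $\mathcal{B}_-$. Running the same argument for $\mathcal{B}_+$ and then intersecting, every $A_\lambda\in\mathcal{B}=\mathcal{B}_-\cap\mathcal{B}_+$ admits both factorizations $A_\lambda=A^-_\lambda\cdot B^+_\lambda=A^+_\lambda\cdot B^-_\lambda$ with the asserted uniqueness, which completes the proof.
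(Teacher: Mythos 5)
Your proposal is correct, and it is essentially the argument the paper implicitly relies on: the paper states Theorem \ref{thm-3.1.4} without proof, citing \cite{Br} and \cite{Br-Do}, where precisely this fixed-point descent from the complex Birkhoff decomposition (using that $\nu_S$ is a group automorphism preserving $\Lambda^{\pm}G^{\mathbb{C}}_\sigma$ and $\Lambda^{\pm}_{*}G^{\mathbb{C}}_\sigma$, plus uniqueness of the complex factorization) is carried out. The only cosmetic point is that the restricted maps are real-analytic rather than holomorphic diffeomorphisms of the real form, as your argument in fact delivers.
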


\subsubsection{}\label{subsec-3.1.3}
   For a general element $A_\lambda\in\Lambda G_\sigma^\mathbb{C}$, its 
variable $\lambda$ only varies in $S^1$. 
   However, for the framing $F_\lambda$ of a para-pluriharmonic map, the 
variable $\lambda$ of $F_\lambda$ can vary in the whole $\mathbb{C}^*$ 
(cf.\ Subsection \ref{subsec-2.2.3}). 
   Toda \cite{To} has addressed this relevant point, since in her work 
$\lambda$ is for all geometric purposes a positive real number. 
   She proposed to consider the following subgroup 
$\widetilde{\Lambda}G_\sigma$ of $\Lambda G_\sigma$:  
\begin{equation}\label{eq-3.1.3}
  \widetilde{\Lambda}G_\sigma
 :=\{A_\lambda\in\Lambda G_\sigma \,|\, 
     \mbox{$A_\lambda$ has an analytic extension 
      $\widetilde{A}_\mu:\mathbb{C}^*\to G^\mathbb{C}$} \}.
\end{equation}
   One equips $\widetilde{\Lambda}G_\sigma$ with the induced topology from 
$\Lambda G_\sigma$, where $\Lambda G_\sigma$ is considered as a loop group 
with $\lambda\in S^1$; and in a similar way, one defines four subgroups 
$\widetilde{\Lambda}^\pm G_\sigma$ and $\widetilde{\Lambda}^\pm_*G_\sigma$ of 
$\Lambda^\pm G_\sigma$ and $\Lambda^\pm_*G_\sigma$, respectively. 
   Then, the following two decomposition theorems hold (cf.\ \cite{Br}, 
\cite{Do-In-To}, \cite{To}):     
\begin{theorem}[Iwasawa decomposition of 
$\widetilde{\Lambda}G_\sigma\times\widetilde{\Lambda}G_\sigma$]\label{thm-3.1.5} 
   The multiplication maps 
\[
\begin{array}{l}
 \triangle(\widetilde{\Lambda}G_\sigma\times\widetilde{\Lambda}G_\sigma)
 \times(\widetilde{\Lambda}^-_* G_\sigma\times\widetilde{\Lambda}^+ G_\sigma)  
 \to 
 \widetilde{\Lambda}G_\sigma\times\widetilde{\Lambda}G_\sigma,\\
 \triangle(\widetilde{\Lambda}G_\sigma\times\widetilde{\Lambda}G_\sigma)
 \times(\widetilde{\Lambda}^+_* G_\sigma\times\widetilde{\Lambda}^- G_\sigma)  
 \to 
 \widetilde{\Lambda}G_\sigma\times\widetilde{\Lambda}G_\sigma
\end{array} 
\] 
are real analytic diffeomorphisms onto open subsets of 
$\widetilde{\Lambda}G_\sigma\times\widetilde{\Lambda}G_\sigma$, respectively.    
\end{theorem}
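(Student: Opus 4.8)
The plan is to deduce Theorem \ref{thm-3.1.5} from its ambient counterpart, Theorem \ref{thm-3.1.3}, by restricting the two Iwasawa multiplication maps to the tilde subgroups. Since each $\widetilde{\Lambda}$-group carries the topology induced from the corresponding $\Lambda$-group, and since products and inverses of loops admitting an analytic extension to $\mathbb{C}^*$ again admit such extensions, both multiplication maps are well-defined into $\widetilde{\Lambda}G_\sigma\times\widetilde{\Lambda}G_\sigma$; injectivity is inherited verbatim from Theorem \ref{thm-3.1.3}, because the $\widetilde{\Lambda}$-groups sit inside the $\Lambda$-groups and the ambient factorization is unique. Thus the only substantial point is surjectivity onto the tilde part of the ambient image: if a pair of extendable loops lies in the image of the ambient map, then all three factors in its Iwasawa splitting are themselves extendable.

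I would isolate this as the key lemma. Fix $(g_1,g_2)\in\widetilde{\Lambda}G_\sigma\times\widetilde{\Lambda}G_\sigma$ lying in the image of the first map of Theorem \ref{thm-3.1.3}, and write $g_1=g\cdot b_-$ and $g_2=g\cdot b_+$ with $g\in\Lambda G_\sigma$, $b_-\in\Lambda^-_*G_\sigma$ and $b_+\in\Lambda^+G_\sigma$. By definition $g_1,g_2$ extend holomorphically to $\mathbb{C}^*$, while $b_-^{-1}$ extends holomorphically to $\mathbb{D}_-$ and $b_+^{-1}$ to $\mathbb{D}_+$. Hence $g=g_1\cdot b_-^{-1}$ extends holomorphically to $\mathbb{D}_-\cap\mathbb{C}^*=\{|z|>1\}$, and $g=g_2\cdot b_+^{-1}$ extends holomorphically to $\mathbb{D}_+\cap\mathbb{C}^*=\{0<|z|<1\}$; the two extensions agree with the original $g$ on $S^1$. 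Patching them across the analytic curve $S^1$ --- the resulting $G^\mathbb{C}$-valued function is continuous on $\mathbb{C}^*$ and holomorphic off $S^1$, hence holomorphic on all of $\mathbb{C}^*$ by a removable-singularity (Morera) argument applied entrywise --- yields $g\in\widetilde{\Lambda}G_\sigma$. Consequently $b_-=g^{-1}\cdot g_1\in\widetilde{\Lambda}^-_*G_\sigma$ and $b_+=g^{-1}\cdot g_2\in\widetilde{\Lambda}^+G_\sigma$, which shows that the Iwasawa factorization respects the tilde structure.

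It then remains to record that the restricted maps are real analytic diffeomorphisms onto open subsets. Openness is automatic: the image is the intersection of the open image furnished by Theorem \ref{thm-3.1.3} with $\widetilde{\Lambda}G_\sigma\times\widetilde{\Lambda}G_\sigma$, hence open in the induced topology. Real analyticity of the restricted map and of its inverse is inherited, since the tilde groups are real analytic subgroups (following \cite{Br}, \cite{Do-In-To}, \cite{To}) and the ambient map of Theorem \ref{thm-3.1.3} together with its inverse is holomorphic, hence real analytic, and restricts accordingly. The second multiplication map is handled by the identical argument with the roles of $\mathbb{D}_+$ and $\mathbb{D}_-$ (equivalently of $b_+$ and $b_-$) interchanged. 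The main obstacle is precisely the key lemma of the preceding paragraph: one must guarantee that splitting a pair of $\mathbb{C}^*$-extendable loops does not destroy extendability, and the delicate input is the across-$S^1$ patching that upgrades two one-sided holomorphic extensions into a genuine analytic extension on the full punctured plane.
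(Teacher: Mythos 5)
The paper does not actually prove Theorem \ref{thm-3.1.5}; it quotes it from \cite{Br}, \cite{Do-In-To} and \cite{To}, so there is no internal proof to compare against. Your reduction to the ambient Iwasawa decomposition (Theorem \ref{thm-3.1.3}) together with the key lemma---that the Iwasawa factors of a pair of $\mathbb{C}^*$-extendable loops are themselves $\mathbb{C}^*$-extendable---is the standard route taken in those references, and your proof of the key lemma is sound: $g=g_1\cdot b_-^{-1}$ is holomorphic on $\{|z|>1\}$, $g=g_2\cdot b_+^{-1}$ is holomorphic on $\{0<|z|<1\}$, both extend continuously up to $S^1$ (the norm condition $\sum||A_k||<\infty$ gives uniform convergence of the one-sided Laurent series on the closed disc and the closed exterior, respectively), they agree there with $g$, and a Morera argument removes $S^1$; inversion preserves holomorphy because $G^\mathbb{C}\subset SL(m,\mathbb{C})$, so the adjugate formula applies. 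Well-definedness, injectivity and openness of the image are indeed inherited exactly as you say. The one thin spot is the closing sentence: since $\widetilde{\Lambda}G_\sigma$ carries only the topology induced from $\Lambda G_\sigma$ and is not a closed (nor obviously locally closed) subgroup, the phrase ``real analytic diffeomorphism'' presupposes a real-analytic manifold structure on the tilde groups that the paper never spells out; what your argument directly delivers is a homeomorphism onto an open subset, and the analyticity assertion is genuinely part of the framework set up in the cited references rather than something that follows formally by restriction from Theorem \ref{thm-3.1.3}. That caveat aside, you have correctly identified and correctly executed the essential step, namely the across-$S^1$ patching.
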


\begin{theorem}[Birkhoff decomposition of $\widetilde{\Lambda}G_\sigma$]
\label{thm-3.1.6}  
   The multiplication maps 
\[
\begin{array}{ll}
 \widetilde{\Lambda}^-_*G_\sigma\times\widetilde{\Lambda}^+G_\sigma 
 \to\widetilde{\Lambda}G_\sigma, 
& 
 \widetilde{\Lambda}^+_*G_\sigma\times\widetilde{\Lambda}^-G_\sigma 
 \to\widetilde{\Lambda}G_\sigma
\end{array}
\] 
are real analytic diffeomorphisms onto the open subsets 
$\widetilde{\mathcal{B}}_\mp:=
 \widetilde{\Lambda}^\mp_*G_\sigma\cdot\widetilde{\Lambda}^\pm G_\sigma$ 
of $\widetilde{\Lambda}G_\sigma$, respectively. 
   In particular, each element 
$A_\lambda\in\widetilde{\mathcal{B}}:=\widetilde{\mathcal{B}}_-
 \cap\widetilde{\mathcal{B}}_+$ 
can be uniquely factorized$:$  
\[
\begin{array}{lrr}
  A_\lambda=A^-_\lambda\cdot B^+_\lambda=A^+_\lambda\cdot B^-_\lambda, 
& A^\pm_\lambda\in\widetilde{\Lambda}^\pm_* G_\sigma, 
& B^\pm_\lambda\in\widetilde{\Lambda}^\pm G_\sigma.   
\end{array}
\]    
\end{theorem}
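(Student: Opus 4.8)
The plan is to deduce Theorem \ref{thm-3.1.6} from the already-established Birkhoff decomposition of $\Lambda G_\sigma$ (Theorem \ref{thm-3.1.4}), exploiting that $\widetilde{\Lambda}G_\sigma\subset\Lambda G_\sigma$ carries the induced topology. First I would observe that the two multiplication maps in the statement are restrictions of the corresponding injective multiplication maps of Theorem \ref{thm-3.1.4}; hence injectivity and the uniqueness of the factorization $A_\lambda=A^-_\lambda\cdot B^+_\lambda=A^+_\lambda\cdot B^-_\lambda$ for $A_\lambda\in\widetilde{\mathcal{B}}$ are inherited at once. The genuine content is to identify the images with $\widetilde{\mathcal{B}}_\mp$, which amounts to showing that the Birkhoff factors of an element of $\widetilde{\Lambda}G_\sigma$ again lie in the tilde subgroups.

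The crucial step is this inheritance property: if $A_\lambda\in\widetilde{\Lambda}G_\sigma\cap\mathcal{B}_-$ with Birkhoff factors $A^-_\lambda\in\Lambda^-_*G_\sigma$ and $B^+_\lambda\in\Lambda^+G_\sigma$ supplied by Theorem \ref{thm-3.1.4}, then in fact $A^-_\lambda\in\widetilde{\Lambda}^-_*G_\sigma$ and $B^+_\lambda\in\widetilde{\Lambda}^+G_\sigma$. I would use the relation $A^-_\lambda=A_\lambda\cdot(B^+_\lambda)^{-1}$ on $S^1$: since $A_\lambda$ extends analytically to $\mathbb{C}^*$ and $(B^+_\lambda)^{-1}$ extends holomorphically to $\mathbb{D}_+$, the right-hand side extends holomorphically over the punctured disk $\mathbb{D}_+\setminus\{0\}=\mathbb{C}^*\cap\mathbb{D}_+$. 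Together with the extension of $A^-_\lambda$ to $\mathbb{D}_-$ already present, and by uniqueness of analytic continuation on the overlaps, $A^-_\lambda$ extends analytically to $\mathbb{D}_-\cup(\mathbb{D}_+\setminus\{0\})\cup S^1=\mathbb{C}^*\cup\{\infty\}$, hence in particular to $\mathbb{C}^*$, so $A^-_\lambda\in\widetilde{\Lambda}^-_*G_\sigma$. Feeding this back into $B^+_\lambda=(A^-_\lambda)^{-1}\cdot A_\lambda$ shows $B^+_\lambda$ extends analytically to $\mathbb{C}^*$, and combined with its holomorphic extension to $\mathbb{D}_+$ it extends to $\mathbb{D}_+\cup\mathbb{C}^*=\mathbb{C}$, so $B^+_\lambda\in\widetilde{\Lambda}^+G_\sigma$. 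The same argument, applied to $A_\lambda=A^+_\lambda\cdot B^-_\lambda$ via $(B^-_\lambda)^{-1}$ on $\mathbb{D}_-$, handles the second decomposition.

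With the inheritance property established I would conclude as follows. The image of $\widetilde{\Lambda}^-_*G_\sigma\times\widetilde{\Lambda}^+G_\sigma$ under multiplication is exactly $\mathcal{B}_-\cap\widetilde{\Lambda}G_\sigma=\widetilde{\mathcal{B}}_-$, which is open in $\widetilde{\Lambda}G_\sigma$ because $\mathcal{B}_-$ is open in $\Lambda G_\sigma$ by Theorem \ref{thm-3.1.4} and $\widetilde{\Lambda}G_\sigma$ carries the induced topology; the argument for $\widetilde{\mathcal{B}}_+$ is identical. The multiplication maps are real analytic, being restrictions of the diffeomorphisms of Theorem \ref{thm-3.1.4}, and their inverses, the factorizations $A_\lambda\mapsto(A^\mp_\lambda,B^\pm_\lambda)$, are real analytic since the factors depend real analytically on $A_\lambda$ by Theorem \ref{thm-3.1.4} and, by the inheritance property, take values in the tilde subgroups.

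The main obstacle I anticipate is the inheritance step, and within it the bookkeeping of overlapping domains: one must check that the several local extensions (to $\mathbb{C}^*$ for $A_\lambda$, to $\mathbb{D}_\pm$ for the factors) agree on their common punctured-disk or annular overlaps so that they patch to a single-valued analytic map on the full $\mathbb{C}^*$ (respectively $\mathbb{C}$), and that the patched maps remain $G^\mathbb{C}$-valued and $\sigma$-twisted. A secondary technical point is to pin down the real analytic structure on $\widetilde{\Lambda}G_\sigma$ underlying the phrase ``real analytic diffeomorphism,'' so that restricting the maps of Theorem \ref{thm-3.1.4} is justified as a statement about manifolds rather than merely about sets.
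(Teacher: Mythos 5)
The paper does not actually prove Theorem \ref{thm-3.1.6}: it is quoted from the literature (``cf.\ \cite{Br}, \cite{Do-In-To}, \cite{To}''), so there is no in-paper argument to compare against. Your reduction to Theorem \ref{thm-3.1.4} via the inheritance property of the Birkhoff factors is, in substance, the standard argument used in those references, and the overall structure (injectivity and uniqueness inherited from the untwisted statement, image identified as $\mathcal{B}_\mp\cap\widetilde{\Lambda}G_\sigma$, openness from the induced topology) is sound. Two points deserve sharpening. First, in the inheritance step the two holomorphic pieces do \emph{not} overlap on an open set: $A_\lambda\cdot(B^+_\lambda)^{-1}$ is holomorphic on $\{0<|z|<1\}$ while $A^-_\lambda$ is holomorphic on $\{|z|>1\}$, and these meet only along $S^1$; so ``uniqueness of analytic continuation on the overlaps'' is not the right tool. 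What one actually uses is that for Wiener-algebra loops both pieces extend continuously to $S^1$, where they agree by the Birkhoff identity, and then a Morera-type (Painlev\'e) argument gives holomorphy across the circle; this is standard but is the real content of the gluing. Second, the assertion that the maps are real analytic \emph{diffeomorphisms} needs the fact that $\widetilde{\Lambda}G_\sigma$ carries a Banach (or Fr\'echet) manifold structure for which it sits inside $\Lambda G_\sigma$ compatibly --- the paper only equips it with the induced topology, and merely restricting the analytic factorization of Theorem \ref{thm-3.1.4} to a subset does not by itself produce an analytic map between manifolds. You flag both issues yourself, which is to your credit, but in a complete write-up they would have to be resolved rather than anticipated; as it stands the proposal is a correct outline of the standard proof with those two steps left open.
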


\begin{remark}\label{rem-3.1.7}
   Throughout this paper, we consider that for 
$A_\lambda\in\widetilde{\Lambda}G_\sigma$, its variable $\lambda$ varies  
not only in $S^1$ but also in $\mathbb{R}^+$ (or more generally in 
$\mathbb{C}^*$).     
\end{remark}

   We end this subsection with showing the following lemma:
\begin{lemma}\label{lem-3.1.8} 
   Each element $C_\lambda\in\widetilde{\Lambda}G_\sigma$ satisfies 
$C_\theta\in G:=\operatorname{Fix}(G^\mathbb{C},\nu)$ for all 
$\theta\in\mathbb{R}^+$.     
\end{lemma}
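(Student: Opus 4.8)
The plan is to upgrade the reality condition $\nu_S(C_\lambda)=C_\lambda$, which a priori holds only for $\lambda\in S^1$, to an identity of holomorphic maps on all of $\mathbb{C}^*$, and then to read it off at real positive values of the loop parameter. First I would record what membership in $\widetilde{\Lambda}G_\sigma$ provides. Since $C_\lambda\in\Lambda G_\sigma=\operatorname{Fix}(\Lambda G^\mathbb{C}_\sigma,\nu_S)$, the definition \eqref{eq-3.1.2} of $\nu_S$ yields
\[
\nu(C_{\overline{\lambda}})=C_\lambda\qquad\text{for all }\lambda\in S^1 .
\]
By the definition \eqref{eq-3.1.3} of $\widetilde{\Lambda}G_\sigma$, the loop $C_\lambda$ moreover admits an analytic extension $\widetilde{C}_\mu:\mathbb{C}^*\to G^\mathbb{C}$ that is holomorphic in $\mu$.

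The key step will be to introduce the auxiliary map $\Phi(\mu):=\nu(\widetilde{C}_{\overline{\mu}})$ and to observe that it is again holomorphic on $\mathbb{C}^*$. Indeed, $\mu\mapsto\overline{\mu}$ and the involution $\nu$ are both antiholomorphic, while $\widetilde{C}$ is holomorphic; hence $\Phi$ is a composition of two antiholomorphic maps with one holomorphic map and is therefore holomorphic in $\mu$. On the unit circle the reality condition above gives $\Phi(\lambda)=\nu(\widetilde{C}_{\overline{\lambda}})=\nu(C_{\overline{\lambda}})=C_\lambda=\widetilde{C}_\lambda$, so the two holomorphic maps $\Phi$ and $\widetilde{C}$ agree on $S^1$.

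Since $G^\mathbb{C}\subset SL(m,\mathbb{C})$, I would then apply the identity theorem entry by entry: $\mathbb{C}^*$ is connected and $S^1$ has accumulation points in it, so agreement on $S^1$ forces $\Phi(\mu)=\widetilde{C}_\mu$ for every $\mu\in\mathbb{C}^*$, that is,
\[
\nu(\widetilde{C}_{\overline{\mu}})=\widetilde{C}_\mu\qquad\text{for all }\mu\in\mathbb{C}^* .
\]
Specializing to $\mu=\theta\in\mathbb{R}^+$, where $\overline{\theta}=\theta$, this reads $\nu(C_\theta)=C_\theta$, which is exactly $C_\theta\in\operatorname{Fix}(G^\mathbb{C},\nu)=G$, as claimed.

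The computation itself is short; the only point requiring care—and the one I would regard as the crux—is the holomorphicity bookkeeping that makes $\Phi$ holomorphic, so that the identity theorem is applicable, together with the observation that the circle $S^1$ is a uniqueness set for holomorphic functions on the connected one-dimensional complex manifold $\mathbb{C}^*$. Everything else is a direct specialization to the real locus.
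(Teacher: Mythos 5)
Your proof is correct and follows essentially the same route as the paper: both deduce $\nu(C_{\overline{\lambda}})=C_\lambda$ on $S^1$ from $\nu_S(C_\lambda)=C_\lambda$, extend this identity to all of $\mathbb{C}^*$ by analytic continuation, and specialize to $\mu=\theta\in\mathbb{R}^+$. The paper compresses the continuation step into a single ``Hence,'' whereas you spell out the holomorphicity of $\mu\mapsto\nu(\widetilde{C}_{\overline{\mu}})$ and the identity theorem on the connected domain $\mathbb{C}^*$ with $S^1$ as a uniqueness set, which is exactly the justification the paper leaves implicit.
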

\begin{proof} 
   Since 
$C_\lambda\in\widetilde{\Lambda}G_\sigma\subset\Lambda G_\sigma
 =\operatorname{Fix}(\Lambda G^\mathbb{C}_\sigma,\nu_S)$, 
it satisfies $\nu(C_{\overline{\lambda}})=\nu_S(C_\lambda)=C_\lambda$ for all 
$\lambda\in S^1$. 
   Hence, one has $\nu(C_{\overline{\mu}})=C_\mu$ for all $\mu\in\mathbb{C}^*$; 
and therefore $\nu(C_\theta)=C_\theta$ for all $\theta\in\mathbb{R}^+$. 
\end{proof}

\subsection{Para-pluriharmonic maps and the loop group method}
\label{subsec-3.2}
   In this subsection, we will study the relation between para-pluriharmonic 
maps and the loop group method. 

\subsubsection{}\label{subsec-3.2.1} 
   Let $G^\mathbb{C}$ be a simply connected, simple, complex linear algebraic 
subgroup of $SL(m,\mathbb{C})$, let $\sigma$ be a holomorphic involution of 
$G^\mathbb{C}$, and let $\nu$ be an antiholomorphic involution of 
$G^\mathbb{C}$ such that $[\sigma,\nu]=0$.  
   Define subgroups $H^\mathbb{C}$, $G$ and $H$ by the same conditions as in 
Subsection \ref{subsec-2.2.3}, respectively---that is, 
\[
\begin{array}{lll} 
   H^\mathbb{C}:=\operatorname{Fix}(G^\mathbb{C},\sigma), 
&  G:=\operatorname{Fix}(G^\mathbb{C},\nu), 
&  H:=\operatorname{Fix}(G,\sigma)=\operatorname{Fix}(H^\mathbb{C},\nu). 
\end{array}  
\]
   We will conclude that the extended framing $F_\theta$ of a 
para-pluriharmonic map belongs to the loop group $\widetilde{\Lambda}G_\sigma$ 
(see \eqref{eq-3.1.3} for $\widetilde{\Lambda}G_\sigma$). 
   Let $(M,I)$ be a simply connected para-complex manifold, and let $F_\theta$ 
be the extended framing of a para-pluriharmonic map 
$f=\pi\circ F:(M,I)\to(G/H,\nabla^1)$ with $F(p_o)=\operatorname{id}$ and 
$[\alpha_\frak{m}^\pm\wedge\alpha_\frak{m}^\pm]=0$, where $p_o$ is a base point 
in $(M,I)$. 
   Then it follows from (2.2.15) and (2.2.16) that $F_\lambda$ belongs to 
$\Lambda G^\mathbb{C}_\sigma$. 
   Moreover, the variable $\lambda$ of $F_\lambda$ can vary in all of  
$\mathbb{C}^*$ (cf.\ Subsection \ref{subsec-2.2.3}).  
   Accordingly one can assert that the framing $F_\lambda$ belongs to 
$\widetilde{\Lambda}G_\sigma$, if it satisfies  
\begin{equation}\label{eq-3.2.1}
 \nu_S(F_\lambda)=F_\lambda
\end{equation}
(see \eqref{eq-3.1.2} for $\nu_S$). 
   Let us show \eqref{eq-3.2.1}. 
   From (2.2.17) we know $\nu(F_\theta)=F_\theta$ for any 
$\theta\in\mathbb{R}^+$. 
   This yields that $\nu_S(F_\lambda)=\nu(F_{\overline{\lambda}})=F_\lambda$ 
for any $\lambda\in S^1$ because $\nu(F_{\overline{\mu}})=F_\mu$ for any 
$\mu\in\mathbb{C}^*$ follows from $\nu(F_\theta)=F_\theta$ for any 
$\theta\in\mathbb{R}^+$. 
   Hence, we have shown \eqref{eq-3.2.1}. 
   Consequently the framing $F_\lambda$ belongs to 
$\widetilde{\Lambda}G_\sigma$.

\subsubsection{Para-pluriharmonic potentials}\label{subsec-3.2.2} 
   We have just shown that $F_\lambda$ belongs to $\widetilde{\Lambda}G_\sigma$, 
where $F_\lambda$ is the extended framing of a para-pluriharmonic map 
$f=\pi\circ F:(M,I)\to(G/H,\nabla^1)$ with $F(p_o)=\operatorname{id}$ and 
$[\alpha_\frak{m}^\pm\wedge\alpha_\frak{m}^\pm]=0$. 
   To $F_\lambda\in\widetilde{\Lambda}G_\sigma$, one can apply the Birkhoff 
decomposition theorem (cf.\ Theorem \ref{thm-3.1.6}). 
   We will obtain a pair of $\frak{m}$-valued $1$-forms $\eta_\theta$ and 
$\tau_\theta$ on $(M,I)$ parameterized $\theta\in\mathbb{R}^+$, from the 
framing $F_\theta$.\par  

   Since $F_\lambda(p_o)\equiv\operatorname{id}\in\widetilde{\mathcal{B}}$, 
one can perform a Birkhoff decomposition of the framing 
$F_\lambda\in\widetilde{\Lambda}G_\sigma$:  
\[
\begin{array}{lrr}
  F_\lambda=F^-_\lambda\cdot L^+_\lambda=F^+_\lambda\cdot L^-_\lambda, 
& F^\pm_\lambda\in\widetilde{\Lambda}^\pm_* G_\sigma, 
& L^\pm_\lambda\in\widetilde{\Lambda}^\pm G_\sigma,    
\end{array}
\]
on an open neighborhood $U$ of $M$ at $p_o$ (cf.\ Theorem \ref{thm-3.1.6}). 
   Define $\eta_\theta$ and $\tau_\theta$ by 
\[
\begin{array}{ll}
  \eta_\theta:=(F^-_\theta)^{-1}\cdot dF^-_\theta, 
& \tau_\theta:=(F^+_\theta)^{-1}\cdot dF^+_\theta,
\end{array}
\]
respectively. 
   Then for any $\theta\in\mathbb{R}^+$, both $\eta_\theta$ and $\tau_\theta$ 
become $\frak{m}$-valued $1$-forms on the para-complex manifold $(U,I)$; and 
furthermore, $\eta_\theta$ is para-holomorphic and $\tau_\theta$ is 
para-antiholomorphic. 
   Indeed, it is immediate from $F_\theta^{-1}\cdot dF_\theta=\alpha^\theta$ 
that 
\[
\begin{split}
 \alpha_\frak{h}+\theta^{-1}\cdot\alpha_\frak{m}^+
 +\theta\cdot\alpha_\frak{m}^- 
 =\alpha^\theta
&=(L^+_\theta)^{-1}\cdot((F^-_\theta)^{-1}\cdot dF^-_\theta)\cdot L^+_\theta  
 +(L^+_\theta)^{-1}\cdot dL^+_\theta\\
&=(L^-_\theta)^{-1}\cdot((F^+_\theta)^{-1}\cdot dF^+_\theta)\cdot L^-_\theta  
 +(L^-_\theta)^{-1}\cdot dL^-_\theta,
\end{split} 
\]
and that $\eta_\theta=\theta^{-1}\cdot\operatorname{Ad}(L^+_0)\alpha_\frak{m}^+$ 
and $\tau_\theta=\theta\cdot\operatorname{Ad}(L^-_0)\alpha_\frak{m}^-$, 
where $L^\pm_\lambda=\sum_{\pm k\geq 0}L_k^\pm\lambda^k$. 
   Here, we remark that $L^\pm_0\in H$ by Lemma \ref{lem-3.1.8}.\par   

   From the extended framing $F_\theta$, we have obtained the pair 
$(\eta_\theta,\tau_\theta)$ of an $\frak{m}$-valued para-holomorphic $1$-form 
and an $\frak{m}$-valued para-antiholomorphic $1$-form on $(U,I)$ parameterized 
by $\theta\in\mathbb{R}^+$. 
   In the next subsection, we will see that the pair 
$(\eta_\theta,\tau_\theta)$ is a {\it para-pluriharmonic potential} (cf.\ 
Definition \ref{def-3.2.1}).

\subsubsection{}\label{subsec-3.2.3}
   We are going to introduce the notion of a para-pluriharmonic potential. 
   Consider two linear subspaces 
$\widetilde{\Lambda}_{-1,\infty}\frak{g}_\sigma$ and 
$\widetilde{\Lambda}_{-\infty,1}\frak{g}_\sigma$ of 
$\widetilde{\Lambda}\frak{g}_\sigma$:   
\[
\begin{array}{l}
 \widetilde{\Lambda}_{-1,\infty}\frak{g}_\sigma
 :=\{ X_\lambda\in\widetilde{\Lambda}\frak{g}_\sigma
   \,|\, X_\lambda=\sum_{i=-1}^\infty X_i\lambda^i \},\\
 \widetilde{\Lambda}_{-\infty,1}\frak{g}_\sigma
 :=\{ Y_\lambda\in\widetilde{\Lambda}\frak{g}_\sigma
   \,|\, Y_\lambda=\sum_{j=-\infty}^1 Y_j\lambda^j \}, 
\end{array}
\]
where $\widetilde{\Lambda}\frak{g}_\sigma$ denotes the Lie algebra of 
$\widetilde{\Lambda}G_\sigma$ (see \eqref{eq-3.1.3} for 
$\widetilde{\Lambda}G_\sigma$). 
   Let $\widetilde{\mathcal{P}}_+=\widetilde{\mathcal{P}}_+(\frak{g})$ and 
$\widetilde{\mathcal{P}}_-=\widetilde{\mathcal{P}}_-(\frak{g})$ denote the 
sets of all $\widetilde{\Lambda}_{-1,\infty}\frak{g}_\sigma$-valued 
para-holomorphic and $\widetilde{\Lambda}_{-\infty,1}\frak{g}_\sigma$-valued 
para-antiholomorphic $1$-forms on a simply connected para-complex manifold 
$(M,I)$, respectively. 
\begin{definition}\label{def-3.2.1} 
   An element 
$(\eta_\lambda,\tau_\lambda)
 \in\widetilde{\mathcal{P}}_+\times\widetilde{\mathcal{P}}_-$ 
is called a {\it para-pluriharmonic potential} (or a {\it potential}, 
for short) on $(M,I)$.    
\end{definition}

\begin{remark}\label{rem-3.2.2} 
   (1) For each potential 
$(\eta_\lambda,\tau_\lambda)
 \in\widetilde{\mathcal{P}}_+\times\widetilde{\mathcal{P}}_-$,  
one may assume that the variable $\lambda$ of $\eta_\lambda$ (resp.\ 
$\tau_\lambda$) varies in $\mathbb{R}^+$ by virtue of 
$\eta_\lambda\in\widetilde{\Lambda}\frak{g}_\sigma$ (resp.\ 
$\tau_\lambda\in\widetilde{\Lambda}\frak{g}_\sigma$).\par

   (2) Note that we has just obtained a para-pluriharmonic potential 
$(\eta_\theta,\tau_\theta)$ from the extended framing $F_\theta$ of a 
para-pluriharmonic map $f:(M,I)\to(G/H,\nabla^1)$ with 
$F(p_o)=\operatorname{id}$ and 
$[\alpha_\frak{m}^\pm\wedge\alpha_\frak{m}^\pm]=0$.\par

   (3) It is unfortunate that the condition 
$[\alpha_\frak{m}^\pm\wedge\alpha_\frak{m}^\pm]=0$ for the applicability of 
the loop group method is necessary, but not always satisfied as shown by 
Krahe \cite{Kr}. 
   The condition is always true for surfaces and if the pseudo metric of the 
target space is positive definite. 
   Other natural conditions for the existence of 
$[\alpha_\frak{m}^\pm\wedge\alpha_\frak{m}^\pm]=0$ are not known.    
\end{remark}

   We has just obtained a para-pluriharmonic potential 
$(\eta_\theta,\tau_\theta)$ from the extended framing $F_\theta$ of a 
para-pluriharmonic map $f:(M,I)\to(G/H,\nabla^1)$ with 
$F(p_o)=\operatorname{id}$ and 
$[\alpha_\frak{m}^\pm\wedge\alpha_\frak{m}^\pm]=0$.  
   The converse statement is also true---that is, one can obtain a 
para-pluriharmonic map and its extended framing from any para-pluriharmonic 
potential and this framing satisfies 
$[\alpha_\frak{m}^\pm\wedge\alpha_\frak{m}^\pm]=0$: 
\begin{proposition}\label{prop-3.2.3} 
   Let 
$(\eta_\theta,\tau_\theta)
 \in\widetilde{\mathcal{P}}_+(\frak{g})
     \times\widetilde{\mathcal{P}}_-(\frak{g})$ 
be a para-pluriharmonic potential on the para-complex manifold $(M,I)$. 
   Then, the following steps provide an $\mathbb{R}^+$-family 
$\{f_\theta\}_{\theta\in\mathbb{R}^+}$ of para-pluriharmonic maps$:$ 
\begin{enumerate}
\item[{\rm (S1)}] 
   Solve the two initial value problems$:$ 
$(A^-_\theta)^{-1}\cdot dA^-_\theta=\eta_\theta$ and 
$(A^+_\theta)^{-1}\cdot dA^+_\theta=\tau_\theta$ with 
$A^\pm_\theta(p_o)\equiv\operatorname{id}$, where $p_o$ is a base point in $(M,I)$. 
\item[{\rm (S2)}]   
  Factorize 
$(A^-_\theta,A^+_\theta)
 \in\widetilde{\Lambda}G_\sigma\times\widetilde{\Lambda}G_\sigma$ 
in the Iwasawa decomposition $($cf.\ Theorem {\rm \ref{thm-3.1.5}}$):$   
$(A^-_\theta,A^+_\theta)=(C_\theta,C_\theta)\cdot(B^+_\theta,B^-_\theta)$, 
where $C_\theta\in\widetilde{\Lambda}G_\sigma$, 
$B^+_\theta\in\widetilde{\Lambda}^+_*G_\sigma$ and 
$B^-_\theta\in\widetilde{\Lambda}^- G_\sigma$. 
\item[{\rm (S3)}] 
   Then, $f_\theta:=\pi\circ C_\theta:(W,I)\to(G/H,\nabla^1)$ becomes a 
para-pluriharmonic map for every $\theta\in\mathbb{R}^+$. 
   Here, $W$ is any open neighborhood of $M$ at $p_o$ such that both {\rm (S1)} 
and {\rm (S2)} are solved on $W$.  
\end{enumerate}
   In particular, $C_\theta(p_o)\equiv\operatorname{id}$ and $C_\theta$ is 
the extended framing of the para-pluriharmonic map 
$f_1=\pi\circ C_1:(W,I)\to(G/H,\nabla^1)$. 
\end{proposition}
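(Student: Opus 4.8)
The plan is to verify that the construction (S1)--(S3) produces a well-defined $\mathbb{R}^+$-family of para-pluriharmonic maps, and then to identify $C_\theta$ as an extended framing. The logical core is to show that the $1$-form $\alpha^\theta := C_\theta^{-1}\cdot dC_\theta$ arising in (S2) has exactly the shape $\alpha_{\frak h} + \theta^{-1}\cdot\alpha_{\frak m}^+ + \theta\cdot\alpha_{\frak m}^-$ required by Proposition~\ref{prop-2.2.4}(b), so that the converse implication (b)$\to$(a) of that proposition yields para-pluriharmonicity together with $[\alpha_{\frak m}^\pm\wedge\alpha_{\frak m}^\pm]=0$ for free.

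First I would treat the integrability behind (S1). Since $\eta_\theta\in\widetilde{\mathcal P}_+$ is $\widetilde{\Lambda}_{-1,\infty}\frak g_\sigma$-valued and \emph{para-holomorphic}, it depends only on the $\partial_+$-directions, so $d\eta_\theta + (1/2)\cdot[\eta_\theta\wedge\eta_\theta]=0$ holds automatically on $T^+M\times T^+M$ by type reasons (a para-holomorphic $1$-form wedged with itself vanishes there, and $d$ reduces to the $\partial_+$-differential). The same argument with $\partial_-$ applies to the para-antiholomorphic $\tau_\theta$. Hence both Maurer--Cartan equations are satisfied and the initial value problems are solvable on a simply connected neighbourhood $W$, producing $A^\pm_\theta$ with $A^\pm_\theta(p_o)=\operatorname{id}$; moreover $A^\pm_\theta\in\widetilde{\Lambda}G_\sigma$ because $\eta_\theta,\tau_\theta$ take values in $\widetilde{\Lambda}\frak g_\sigma$.

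Next I would run (S2): shrinking $W$ if necessary so that $(A^-_\theta,A^+_\theta)$ lands in the open set where Theorem~\ref{thm-3.1.5} applies (this is possible since at $p_o$ the pair is $(\operatorname{id},\operatorname{id})$), the Iwasawa splitting gives $C_\theta$ with $C_\theta(p_o)=\operatorname{id}$. The heart of the proof is then the computation of $\alpha^\theta=C_\theta^{-1}dC_\theta$. From $A^-_\theta=C_\theta\cdot B^+_\theta$ and $A^+_\theta=C_\theta\cdot B^-_\theta$ I would derive, exactly as in the reverse direction of Subsection~\ref{subsec-3.2.2},
\[
\alpha^\theta
 = \operatorname{Ad}(B^+_\theta)\,\eta_\theta - dB^+_\theta\cdot (B^+_\theta)^{-1}
 = \operatorname{Ad}(B^-_\theta)\,\tau_\theta - dB^-_\theta\cdot (B^-_\theta)^{-1},
\]
and compare Fourier coefficients in $\lambda$. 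The first expression, since $\eta_\theta$ has lowest degree $-1$ and $B^+_\theta\in\widetilde{\Lambda}^+_*G_\sigma$ starts at degree $0$, has no terms of degree $<-1$; the second, symmetrically, has no terms of degree $>1$. Together these force $\alpha^\theta$ to be a Laurent polynomial supported in degrees $-1,0,1$, i.e.\ $\alpha^\theta=\lambda^{-1}\xi_{-1}+\xi_0+\lambda\,\xi_1$. The twisting $d\sigma(\alpha^\theta)=\alpha^{-\theta}$ (inherited from $C_\theta\in\widetilde{\Lambda}G_\sigma$) pins down $\xi_0\in\frak h$ and $\xi_{\pm1}\in\frak m$, so $\alpha^\theta$ has precisely the asserted form with $\alpha_{\frak h}=\xi_0$, $\alpha_{\frak m}^+=\theta\xi_{-1}$, $\alpha_{\frak m}^-=\theta^{-1}\xi_1$ after reinserting $\theta$.

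\textbf{The main obstacle} I expect is this coefficient-matching/type-counting step: one must be careful that the two Birkhoff-type factors $B^\pm_\theta$ really truncate $\alpha^\theta$ from both sides simultaneously, and that the para-holomorphicity of $\eta_\theta$ versus para-antiholomorphicity of $\tau_\theta$ correctly places $\alpha_{\frak m}^+$ on $T^+M$ and $\alpha_{\frak m}^-$ on $T^-M$ (rather than mixing the types). Once $\alpha^\theta$ is shown to be of loop-parameter form with the correct $\frak h/\frak m$ and $T^\pm M$ splitting, the conclusion is immediate: $C_\theta^{-1}dC_\theta=\alpha^\theta$ satisfies the Maurer--Cartan equation for every $\theta\in\mathbb{C}^*$, so Proposition~\ref{prop-2.2.4} gives that $f_\theta=\pi\circ C_\theta$ is para-pluriharmonic with $[\alpha_{\frak m}^\pm\wedge\alpha_{\frak m}^\pm]=0$. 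Finally, $C_\theta(p_o)=\operatorname{id}$ and the normalization $\alpha^1=\alpha$ identify $C_\theta$ with the extended framing of $f_1=\pi\circ C_1$ in the sense of Definition~\ref{def-2.2.6}, completing the proof.
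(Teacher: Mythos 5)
Your proposal follows essentially the same route as the paper's proof: solve (S1), perform the Iwasawa splitting of (S2) near $p_o$, compute $\beta^\theta=C_\theta^{-1}\cdot dC_\theta$ from the two factorizations $C_\theta=A^-_\theta\cdot(B^+_\theta)^{-1}=A^+_\theta\cdot(B^-_\theta)^{-1}$, truncate the Fourier expansion from below using $\eta_\theta$ and $B^+_\theta$ and from above using $\tau_\theta$ and $B^-_\theta$, use para-holomorphicity versus para-antiholomorphicity to place the $\lambda^{-1}$-coefficient in $T^+M$ and the $\lambda$-coefficient in $T^-M$, read off the $\frak h/\frak m$ splitting from the twisting condition, and then invoke the (b)$\to$(a) direction of Proposition~\ref{prop-2.2.4}. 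This is exactly the paper's argument, including the normalization $C_\theta(p_o)\equiv\operatorname{id}$ via $\widetilde{\Lambda}^+_*G_\sigma\cap\widetilde{\Lambda}^-G_\sigma=\{\operatorname{id}\}$.

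One side remark in your write-up is incorrect, however: the claim that $d\eta_\theta+\frac{1}{2}[\eta_\theta\wedge\eta_\theta]=0$ holds ``automatically by type reasons'' because $\eta_\theta$ is para-holomorphic. For $n\ge 2$ this fails: writing $\eta=\sum_a\eta_a\,dx^a$, one has $[\eta\wedge\eta](\partial_+^a,\partial_+^b)=2[\eta_a,\eta_b]$ and $d\eta(\partial_+^a,\partial_+^b)=\partial_+^a\eta_b-\partial_+^b\eta_a$, neither of which need vanish; a $(1,0)$-type form wedged with itself does \emph{not} vanish on $T^+M\times T^+M$ when the matrix coefficients fail to commute. (Only for $n=1$, i.e.\ Lorentz surfaces, is this automatic.) The paper itself silently assumes the solvability of (S1) rather than proving it, so this does not put you at odds with the paper's logic, but the justification you offer is false as stated; the honest statement is that integrability of the potential is an implicit hypothesis (or must be imposed) in dimensions $n\ge 2$. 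The remainder of your argument is sound and coincides with the paper's.
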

\begin{proof}
   (S1), (S2): The solution $(A_\theta^-,A_\theta^+)$ to (S1) satisfies 
$A_\theta^\mp\in\widetilde{\Lambda}G_\sigma$ and 
$A_\theta^\mp(p_o)\equiv\operatorname{id}$. 
   Therefore, it belongs to the open subset of 
$\widetilde{\Lambda}G_\sigma\times\widetilde{\Lambda}G_\sigma$ locally. 
   Hence, one can factorize $(A_\theta^-,A_\theta^+)$ by means of (S2).\par

   (S3): Let $W$ be any open neighborhood of $M$ at $p_o$ such that both (S1) 
and (S2) are solved on $W$. 
   First, let us show $C_\theta(p_o)\equiv\operatorname{id}$. 
   Since $A_\theta^\mp(p_o)\equiv\operatorname{id}$ we have 
$\widetilde{\Lambda}^+_* G_\sigma\ni B^+_\theta(p_o)=C_\theta(p_o)^{-1}
 =B^-_\theta(p_o)\in\widetilde{\Lambda}^- G_\sigma$. 
   Hence, 
$C_\theta(p_o)\in(\widetilde{\Lambda}^+_* G_\sigma
 \cap\widetilde{\Lambda}^- G_\sigma)=\{\operatorname{id}\}$. 
   Now, let $\beta^\mu:=C_\mu^{-1}\cdot dC_\mu$ for $\mu\in\mathbb{C}^*$. 
   Lemma \ref{lem-3.1.8} implies that $\beta^\theta$ is a $\frak{g}$-valued 
$1$-form on $W$ for any $\theta\in\mathbb{R}^+$.
   Therefore, one can express it as 
$\beta^\theta=(\beta^\theta)_\frak{h}+(\beta^\theta)_\frak{m}
=(\beta^\theta)_\frak{h}+(\beta^\theta)_\frak{m}^++(\beta^\theta)_\frak{m}^-$ 
by taking $\frak{g}=\frak{h}\oplus\frak{m}$ into consideration (see (2.2.5) 
and (2.2.6) for $(\beta^\theta)_\frak{h}$ and $(\beta^\theta)_\frak{m}^\pm$).  
   Then we obtain the conclusion, if one has 
\begin{equation}\label{eq-3.2.2}
  \beta^\theta=(\beta^1)_\frak{h}+\theta^{-1}\cdot(\beta^1)_\frak{m}^+
    +\theta\cdot(\beta^1)_\frak{m}^-
\end{equation}
because $\beta^\theta=C_\theta^{-1}\cdot dC_\theta$ satisfies 
$d\beta^\theta+(1/2)\cdot[\beta^\theta\wedge\beta^\theta]=0$ 
for any $\theta\in\mathbb{R}^+$, and thus the proof of Proposition 
\ref{prop-2.2.4} and \eqref{eq-3.2.2} allow us to conclude that 
$f_\theta=\pi\circ C_\theta:(W,I)\to(G/H,\nabla^1)$ is a para-pluriharmonic 
map for every $\theta\in\mathbb{R}^+$. 
   Hence, it suffices to prove \eqref{eq-3.2.2}.   
   Direct computation, together with 
$C_\theta=A_\theta^-\cdot(B^+_\theta)^{-1}=A_\theta^+\cdot(B^-_\theta)^{-1}$, 
gives us 
\[
\begin{split}
  (\beta^\theta,\beta^\theta)
&=\bigl(C_\theta^{-1}\cdot dC_\theta,\,\,
        C_\theta^{-1}\cdot dC_\theta\bigr)\\
&=\bigl(B^+_\theta\cdot\eta_\theta\cdot(B^+_\theta)^{-1}
    +B^+_\theta\cdot d(B^+_\theta)^{-1},\,\, 
   B^-_\theta\cdot\tau_\theta\cdot(B^-_\theta)^{-1}
    +B^-_\theta\cdot d(B^-_\theta)^{-1}\bigr). 
\end{split}
\]
   Therefore, the Fourier series 
$\beta^\lambda=\sum_{k\in\mathbb{Z}}\beta_k\lambda^k$ has actually the simple 
form:
\[\label{a}\tag{a}
   \beta^\lambda
=\lambda^{-1}\cdot\beta_{-1}+\beta_0+\lambda\cdot\beta_{+1}
\]   
because the $n$-th and $m$-th Fourier coefficients of 
$B^+_\lambda\cdot\eta_\lambda\cdot(B^+_\lambda)^{-1}
    +B^+_\lambda\cdot d(B^+_\lambda)^{-1}$ 
and 
$B^-_\lambda\cdot\tau_\lambda\cdot(B^-_\lambda)^{-1}
    +B^-_\lambda\cdot d(B^-_\lambda)^{-1}$ 
are zero for all $n\leq -2$ and $2\leq m$, respectively. 
   Let us denote by $(\beta_j)^+$ and $(\beta_j)^-$ the para-holomorphic 
component and the para-antiholomorphic component of $\beta_j$, respectively 
(i.e.,   $(\beta_j)^\pm:=(1/2)\cdot(\beta_j\pm{}^tI(\beta_j))$) for $j=\pm 1$, and 
rewrite the above \eqref{a} as 
\[\label{a'}\tag{a$'$}
   \beta^\lambda
=\lambda^{-1}\cdot((\beta_{-1})^++(\beta_{-1})^-)
  +\beta_0+\lambda\cdot((\beta_{+1})^++(\beta_{+1})^-).
\]    
   Then, \eqref{a'} simplifies to 
\[\label{a''}\tag{a$''$}
   \beta^\lambda
=\lambda^{-1}\cdot(\beta_{-1})^++\beta_0+\lambda\cdot(\beta_{+1})^-
\]
because the $-1$st and $+1$st Fourier coefficients of 
$B^+_\lambda\cdot\eta_\lambda\cdot(B^+_\lambda)^{-1}
    +B^+_\lambda\cdot d(B^+_\lambda)^{-1}$ 
and 
$B^-_\lambda\cdot\tau_\lambda\cdot(B^-_\lambda)^{-1}
    +B^-_\lambda\cdot d(B^-_\lambda)^{-1}$ 
are para-holomorphic and para-antiholomorphic, respectively. 
   From \eqref{a''} and $\beta^\lambda\in\widetilde{\Lambda}\frak{g}_\sigma$ 
we see that $(\beta^1)_\frak{h}=\beta_0$ and 
$(\beta^1)_\frak{m}=(\beta_{-1})^++(\beta_{+1})^-$. 
   This implies $(\beta^1)_\frak{m}^+=(\beta_{-1})^+$, 
$(\beta^1)_\frak{m}^-=(\beta_{+1})^-$ and 
$\beta^\lambda=\lambda^{-1}\cdot(\beta^1)_\frak{m}^++(\beta^1)_\frak{h}
 +\lambda\cdot(\beta^1)_\frak{m}^-$; 
and \eqref{eq-3.2.2} follows.   
\end{proof}

\subsection{Pluriharmonic maps and the loop group method}\label{subsec-3.3} 
   We have explained the relation between para-pluriharmonic maps and the loop 
group method in Subsection \ref{subsec-3.2}. 
   In this subsection, we will explain the relation between pluriharmonic maps 
and the loop group method. 
   The arguments below will be similar to those in Subsection \ref{subsec-3.2}.

\subsubsection{}\label{subsec-3.3.1} 
   In Subsection \ref{subsec-3.2.1} we have learned that the extended 
framing $F_\theta'$ of a para-pluriharmonic map belongs to the almost split 
real form $\Lambda G_\sigma$---that is, it satisfies 
$\nu_S(F_\lambda')=F_\lambda'$ for the involution $\nu_S$ of {\it the first 
kind} (cf.\ \eqref{eq-3.1.2} for $\nu_S$). 
   In this subsection, we will first confirm that the extended framing 
$F_\lambda$ of a pluriharmonic map satisfies $\nu_C(F_\lambda)=F_\lambda$ for 
the involution $\nu_C$ of {\it the second kind} defined below.\par

   Let $G^\mathbb{C}$ be a simply connected, simple, complex linear algebraic 
subgroup of $SL(m,\mathbb{C})$, let $\sigma$ be a holomorphic involution of 
$G^\mathbb{C}$, and let $\nu$ be an antiholomorphic involution of 
$G^\mathbb{C}$ such that $[\sigma,\nu]=0$.  
   Denote by $H^\mathbb{C}$, $G$ and $H$, the subgroups defined in Subsection 
\ref{subsec-2.2.3}, respectively (cf.\ \eqref{eq-2.2.14}). 
   Now, let us define an antiholomorphic involution $\nu_C$ of 
$\Lambda G^\mathbb{C}_\sigma$ by  
\begin{equation}\label{eq-3.3.1}
\begin{array}{ll}
  \nu_C(A_\lambda):=\nu(A_{1/\overline{\lambda}}) 
& \mbox{for $A_\lambda\in\Lambda G^\mathbb{C}_\sigma$}. 
\end{array}   
\end{equation} 
   This involution $\nu_C$ is said to be of {\it the second kind}, and 
satisfies the following: 
\begin{equation}\label{eq-3.3.2}
\begin{array}{ll}
  \nu_C(\Lambda^\pm G^\mathbb{C}_\sigma)=\Lambda^\mp G^\mathbb{C}_\sigma, 
& \nu_C(\Lambda^\pm_* G^\mathbb{C}_\sigma)=\Lambda^\mp_* G^\mathbb{C}_\sigma.  
\end{array}   
\end{equation} 
   Let $p_o$ be a base point in a simply connected complex manifold $(M,J)$, 
and let $F_\lambda$ be the extended framing of a pluriharmonic map 
$f=\pi\circ F:(M,J)\to(G/H,\nabla^1)$ with $F(p_o)=\operatorname{id}$ and 
$[\alpha_\frak{m}'\wedge\alpha_\frak{m}']=0$. 
   From (2.3.9) it follows that $F_\lambda\in\Lambda G^\mathbb{C}_\sigma$. 
   In particular, (2.3.10) implies that $F_\lambda$ satisfies 
\begin{equation}\label{eq-3.3.3}
  \nu_C(F_\lambda)=F_\lambda.     
\end{equation} 

\subsubsection{Pluriharmonic potentials}\label{subsec-3.3.2}
   Since $F_\lambda(p_o)\equiv\operatorname{id}$ we perform a Birkhoff 
decomposition of the framing $F_\lambda$. 
   Therefore we obtain a pair of $\frak{m}^\mathbb{C}$-valued $1$-forms 
$\eta_\lambda$ and $\tau_\lambda$ on $(M,J)$ parameterized by $\lambda\in S^1$. 
   Here 
$\frak{m}^\mathbb{C}:=\operatorname{Fix}(\frak{g}^\mathbb{C},-d\sigma)$.  
   We will see later that the pair $(\eta_\lambda,\tau_\lambda)$ is a 
pluriharmonic potential (cf.\ Definition \ref{def-3.3.1}).    

   Since $F_\lambda(p_o)\equiv\operatorname{id}\in\mathcal{B}^\mathbb{C}$, 
we factorize the framing $F_\lambda\in\Lambda G^\mathbb{C}_\sigma$ in the 
Birkhoff decomposition:  
\[
\begin{array}{lrr}
  F_\lambda=F^-_\lambda\cdot L^+_\lambda=F^+_\lambda\cdot L^-_\lambda, 
& F^\pm_\lambda\in\Lambda^\pm_* G^\mathbb{C}_\sigma, 
& L^\pm_\lambda\in\Lambda^\pm G^\mathbb{C}_\sigma,    
\end{array}
\]
on an open neighborhood $U$ of $M$ at $p_o$ (cf.\ Theorem \ref{thm-3.1.2}). 
   Define $\eta_\lambda$ and $\tau_\lambda$ by 
\[
\begin{array}{ll}
  \eta_\lambda:=(F^-_\lambda)^{-1}\cdot dF^-_\lambda, 
& \tau_\lambda:=(F^+_\lambda)^{-1}\cdot dF^+_\lambda,
\end{array}
\]
respectively. 
   Then for any $\lambda\in S^1$, both $\eta_\lambda$ and $\tau_\lambda$ 
become $\frak{m}^\mathbb{C}$-valued $1$-forms on the complex manifold $(U,J)$. 
   In addition, $\eta_\lambda$ is holomorphic and $\tau_\lambda$ is 
antiholomorphic. 
   Indeed, $F_\lambda^{-1}\cdot dF_\lambda=\alpha^\lambda$ yields 
\[
\begin{split}
 \alpha_\frak{h}+\lambda^{-1}\cdot\alpha_\frak{m}'
 +\lambda\cdot\alpha_\frak{m}'' 
 =\alpha^\lambda
&=(L^+_\lambda)^{-1}\cdot((F^-_\lambda)^{-1}\cdot dF^-_\lambda)\cdot L^+_\lambda  
 +(L^+_\theta)^{-1}\cdot dL^+_\theta\\
&=(L^-_\lambda)^{-1}\cdot((F^+_\lambda)^{-1}\cdot dF^+_\lambda)\cdot L^-_\lambda  
 +(L^-_\lambda)^{-1}\cdot dL^-_\lambda,
\end{split} 
\]
and  
$\eta_\lambda=\lambda^{-1}\cdot\operatorname{Ad}(L^+_0)\alpha_\frak{m}'$ 
and $\tau_\lambda=\lambda\cdot\operatorname{Ad}(L^-_0)\alpha_\frak{m}''$, 
where $L^\pm_\lambda=\sum_{\pm k\geq 0}L_k^\pm\lambda^k$. 
   Now, it follows from \eqref{eq-3.3.2} and \eqref{eq-3.3.3} that 
$\nu_C(F^-_\lambda)=F^+_\lambda$. 
   This implies that $\eta_\lambda$ is related with $\tau_\lambda$ by  
the formula $d\nu_C(\eta_\lambda)=\tau_\lambda$.  
   Consequently we obtain from the extended framing $F_\lambda$ of 
a pluriharmonic map the pair $(\eta_\lambda,\tau_\lambda)$ of 
an $\frak{m}^\mathbb{C}$-valued holomorphic $1$-form and an 
$\frak{m}^\mathbb{C}$-valued antiholomorphic $1$-form on $(U,J)$ satisfying 
$d\nu_C(\eta_\lambda)=\tau_\lambda$.

\subsubsection{}\label{subsec-3.3.3} 
   Let us introduce the following subspaces 
$\Lambda_{-1,\infty}\frak{g}^\mathbb{C}_\sigma$ and 
$\Lambda_{-\infty,1}\frak{g}^\mathbb{C}_\sigma$ of 
$\Lambda\frak{g}^\mathbb{C}_\sigma$, in order to recall the notion of a 
pluriharmonic potential:   
\[
\begin{array}{l}
 \Lambda_{-1,\infty}\frak{g}^\mathbb{C}_\sigma
 :=\{ X_\lambda\in\Lambda\frak{g}^\mathbb{C}_\sigma
   \,|\, X_\lambda=\sum_{i=-1}^\infty X_i\lambda^i \},\\
 \Lambda_{-\infty,1}\frak{g}^\mathbb{C}_\sigma
 :=\{ Y_\lambda\in\Lambda\frak{g}^\mathbb{C}_\sigma
   \,|\, Y_\lambda=\sum_{j=-\infty}^1 Y_j\lambda^j \} 
\end{array}
\]        
(cf.\ \eqref{eq-3.1.1} for $\Lambda\frak{g}^\mathbb{C}_\sigma$). 
   Let $\mathcal{P}'=\mathcal{P}'(\frak{g}^\mathbb{C})$ and 
$\mathcal{P}''=\mathcal{P}''(\frak{g}^\mathbb{C})$ denote the set of all 
$\Lambda_{-1,\infty}\frak{g}^\mathbb{C}_\sigma$-valued holomorphic 
and $\Lambda_{-\infty,1}\frak{g}^\mathbb{C}_\sigma$-valued antiholomorphic 
$1$-forms on a simply connected complex manifold $(M,J)$, respectively. 

\begin{definition}\label{def-3.3.1}
   An element $(\eta_\lambda,\tau_\lambda)\in\mathcal{P}'\times\mathcal{P}''$ 
is called a {\it pluriharmonic potential} (or a {\it potential}, for short) on 
$(M,J)$, if it satisfies $d\nu_C(\eta_\lambda)=\tau_\lambda$ (cf.\ 
\eqref{eq-3.3.1} for $\nu_C$).    
\end{definition}

   In Subsection \ref{subsec-3.3.2} one has obtained a pluriharmonic potential 
$(\eta_\lambda,\tau_\lambda)$ from the extended framing $F_\lambda$ of a 
pluriharmonic map $f=\pi\circ F:(M,J)\to(G/H,\nabla^1)$ with 
$F(p_o)=\operatorname{id}$ and $[\alpha_\frak{m}'\wedge\alpha_\frak{m}']=0$.    
   Next we recall from \cite{Do-Es} that one can obtain a pluriharmonic map 
and its extended framing from a pluriharmonic potential: 
\begin{proposition}\label{prop-3.3.2}
   Let 
$(\eta_\lambda,\tau_\lambda)=(\eta_\lambda,d\nu_C(\eta_\lambda))
 \in\mathcal{P}'(\frak{g}^\mathbb{C})\times\mathcal{P}''(\frak{g}^\mathbb{C})$  
be any pluriharmonic potential on the complex manifold $(M,J)$. 
   Then, the following steps provide an $S^1$-family 
$\{f_\lambda\}_{\lambda\in S^1}$ of pluriharmonic maps$:$  
\begin{enumerate}
\item[{\rm (S1)}] 
   Solve the two initial value problems$:$ 
$A_\lambda^{-1}\cdot dA_\lambda=\eta_\lambda$, 
$B_\lambda^{-1}\cdot dB_\lambda=\tau_\lambda$ with 
$A_\lambda(p_o)\equiv\operatorname{id}\equiv B_\lambda(p_o)$, where $p_o$ is 
a base point in $(M,J)$. 
\item[{\rm (S2)}] 
   Factorize 
$(A_\lambda,B_\lambda)\in\Lambda G^\mathbb{C}_\sigma\times
 \Lambda G^\mathbb{C}_\sigma$ 
in the Iwasawa decomposition $($cf.\ Theorem $\ref{thm-3.1.1}):$ 
$(A_\lambda,B_\lambda)
  =(C_\lambda,C_\lambda)\cdot(B^+_\lambda,B^-_\lambda)$, 
where $C_\lambda\in\Lambda G^\mathbb{C}_\sigma$, 
$B^+_\lambda\in\Lambda^+_*G^\mathbb{C}_\sigma$ and  
$B^-_\lambda\in\Lambda^-G^\mathbb{C}_\sigma$. 
\item[{\rm (S3)}] 
   Take an open neighborhood $V$ of $M$ at $p_o$ and a smooth map 
$h^\mathbb{C}=h^\mathbb{C}(p): V\to H^\mathbb{C}$ such that 
 \begin{enumerate}
  \item[{\rm (1)}]  
   $C_\lambda'(p)\in G$ for all $(p,\lambda)\in V\times S^1$,
  \item[{\rm (2)}]  
   $C_\lambda'(p_o)\equiv\operatorname{id}$, where 
   $C_\lambda':=C_\lambda\cdot h^\mathbb{C}$.
  \end{enumerate}  
\item[{\rm (S4)}] 
   Then, $f_\lambda:=\pi\circ C_\lambda':(V,J)\to(G/H,\nabla^1)$ 
becomes an $S^1$-family of pluriharmonic maps.       
\end{enumerate} 
\end{proposition}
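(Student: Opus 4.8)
The plan is to follow the proof of Proposition~\ref{prop-3.2.3} and the argument of \cite{Do-Es}, with the involution $\nu_S$ of the first kind and the real parameter $\theta\in\mathbb{R}^+$ replaced by the involution $\nu_C$ of the second kind and the circle parameter $\lambda\in S^1$. The genuinely new ingredient is the gauge $h^\mathbb{C}$ in (S3): it is forced on us because the Iwasawa factor $C_\lambda$ now lands in $\Lambda G^\mathbb{C}_\sigma$ rather than in a real form, so an extra reality correction is needed to push it into $G$.

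For (S1) and (S2): since $(M,J)$ is simply connected and the potential forms are integrable, the two initial value problems have unique solutions $A_\lambda,B_\lambda\in\Lambda G^\mathbb{C}_\sigma$ with $A_\lambda(p_o)\equiv\operatorname{id}\equiv B_\lambda(p_o)$. Evaluating at $p_o$ shows $(A_\lambda,B_\lambda)(p_o)$ lies in the domain of the Iwasawa decomposition of Theorem~\ref{thm-3.1.1}, so near $p_o$ the factorization (S2) is available; moreover $B^+_\lambda(p_o)=C_\lambda(p_o)^{-1}=B^-_\lambda(p_o)$ forces $C_\lambda(p_o)^{-1}\in\Lambda^+_*G^\mathbb{C}_\sigma\cap\Lambda^-G^\mathbb{C}_\sigma=\{\operatorname{id}\}$, whence $C_\lambda(p_o)\equiv\operatorname{id}$.

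The core of the proof is (S3). First I would use the defining relation $\tau_\lambda=d\nu_C(\eta_\lambda)$: applying $\nu_C$ to $A_\lambda^{-1}\cdot dA_\lambda=\eta_\lambda$ shows that $\nu_C(A_\lambda)$ solves the $\tau_\lambda$-problem with identity value at $p_o$, so uniqueness in (S1) gives $B_\lambda=\nu_C(A_\lambda)$. Substituting the Iwasawa identities $A_\lambda=C_\lambda B^+_\lambda$ and $B_\lambda=C_\lambda B^-_\lambda$ yields $g_\lambda:=\nu_C(C_\lambda)^{-1}C_\lambda=\nu_C(B^+_\lambda)\cdot(B^-_\lambda)^{-1}$, which lies in $\Lambda^-G^\mathbb{C}_\sigma$ by \eqref{eq-3.3.2}; but $\nu_C(g_\lambda)=g_\lambda^{-1}$ together with \eqref{eq-3.3.2} puts $g_\lambda^{-1}$ in $\Lambda^+G^\mathbb{C}_\sigma$, so $g_\lambda\in\Lambda^+G^\mathbb{C}_\sigma\cap\Lambda^-G^\mathbb{C}_\sigma$. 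Hence $g_\lambda$ is a $\lambda$-independent map $g:V\to H^\mathbb{C}$ with $\nu(g)=g^{-1}$ and $g(p_o)=\operatorname{id}$, i.e.\ $g$ takes values in the Cartan-embedded image of $H^\mathbb{C}/H$. Therefore one can locally solve $g=\nu(h^\mathbb{C})\cdot(h^\mathbb{C})^{-1}$ for a smooth $h^\mathbb{C}:V\to H^\mathbb{C}$ with $h^\mathbb{C}(p_o)=\operatorname{id}$; a direct check then gives $\nu_C(C_\lambda')=C_\lambda'$ and $C_\lambda'(p_o)\equiv\operatorname{id}$ for $C_\lambda':=C_\lambda\cdot h^\mathbb{C}$, which are exactly the two requirements of (S3). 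I expect the solvability of $g=\nu(h^\mathbb{C})\cdot(h^\mathbb{C})^{-1}$ to be the main obstacle, as this reality correction has no counterpart in the para-pluriharmonic case.

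Finally, for (S4) I would compute $\beta^\lambda:=(C_\lambda')^{-1}\cdot dC_\lambda'$. Writing $C_\lambda'=A_\lambda\cdot(B^+_\lambda)^{-1}\cdot h^\mathbb{C}=B_\lambda\cdot(B^-_\lambda)^{-1}\cdot h^\mathbb{C}$ and using that the $\lambda$-independent $h^\mathbb{C}\in H^\mathbb{C}$ preserves Fourier degrees, the first expression with $\eta_\lambda\in\Lambda_{-1,\infty}\frak{g}^\mathbb{C}_\sigma$ and the second with $\tau_\lambda\in\Lambda_{-\infty,1}\frak{g}^\mathbb{C}_\sigma$ show that $\beta^\lambda=\lambda^{-1}\cdot\beta_{-1}+\beta_0+\lambda\cdot\beta_1$. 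The $\sigma$-twisting forces $\beta_0\in\frak{h}^\mathbb{C}$ and $\beta_{\pm1}\in\frak{m}^\mathbb{C}$, the holomorphy of $\eta_\lambda$ and the antiholomorphy of $\tau_\lambda$ force $\beta_{-1}$ to be of type $(1,0)$ and $\beta_1$ of type $(0,1)$, and the $G$-reality of $C_\lambda'$ on $S^1$ identifies, with $\alpha:=\beta^1$, the three coefficients as $\beta_0=\alpha_\frak{h}$, $\beta_{-1}=\alpha_\frak{m}'$ and $\beta_1=\alpha_\frak{m}''$; thus $\beta^\lambda=\alpha_\frak{h}+\lambda^{-1}\cdot\alpha_\frak{m}'+\lambda\cdot\alpha_\frak{m}''$. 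Since $d\beta^\lambda+(1/2)\cdot[\beta^\lambda\wedge\beta^\lambda]$ is a Laurent polynomial in $\lambda$ vanishing on $S^1$, it vanishes on all of $\mathbb{C}^*$, so for each fixed $\lambda_0\in S^1$ the $\mu$-family built from $C_{\lambda_0}'$ equals $\beta^{\mu\lambda_0}$ and is Maurer--Cartan flat for all $\mu$. Proposition~\ref{prop-2.3.3}, implication (b)$\to$(a), then shows that $f_\lambda=\pi\circ C_\lambda':(V,J)\to(G/H,\nabla^1)$ is pluriharmonic for every $\lambda\in S^1$.
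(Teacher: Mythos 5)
Your argument is correct and follows essentially the same route as the paper's own proof: deducing $B_\lambda=\nu_C(A_\lambda)$ from the morphing relation, observing that $\nu_C(C_\lambda)^{-1}\cdot C_\lambda=\nu_C(B^+_\lambda)\cdot(B^-_\lambda)^{-1}$ lies in $\Lambda^+G^\mathbb{C}_\sigma\cap\Lambda^-G^\mathbb{C}_\sigma=H^\mathbb{C}$, correcting by a Cartan-type gauge $h^\mathbb{C}$, and then running the Fourier-degree and type analysis of $\beta^\lambda$ into Proposition \ref{prop-2.3.3}. The one step you assert rather than carry out --- local solvability of $g=\nu(h^\mathbb{C})\cdot(h^\mathbb{C})^{-1}$ with $h^\mathbb{C}(p_o)=\operatorname{id}$ --- is exactly what the paper supplies explicitly, by writing the $H^\mathbb{C}$-valued quantity as $\exp X(p)$ on a small enough $V$, checking $d\nu(X)=-X$, and setting $h^\mathbb{C}:=\exp((1/2)\cdot X)$.
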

\begin{proof}
   (S1), (S2): For the solutions $A_\lambda$ and $B_\lambda$ to (S1), we 
deduce that they satisfy 
\begin{equation}\label{eq-3.3.4}
 \nu_C(A_\lambda)=B_\lambda
\end{equation}
in terms of $d\nu_C(\eta_\lambda)=\tau_\lambda$. 
   Since $A_\lambda(p_o)\equiv\operatorname{id}\equiv B_\lambda(p_o)$ and 
$(A_\lambda(p_o),B_\lambda(p_o))$ belongs to a suitable open subset of 
$\Lambda G^\mathbb{C}_\sigma\times\Lambda G^\mathbb{C}_\sigma$, one can 
factorize $(A_\lambda,B_\lambda)$ by means of (S2).\par

   (S3): Let us assume that both (S1) and (S2) hold on an open neighborhood 
$W$ of $M$ at $p_o$. 
   We will confirm that there exist an open neighborhood $V$ ($\subset W$) of 
$M$ at $p_o$ and a smooth map 
$h^\mathbb{C}=h^\mathbb{C}(p):V\to H^\mathbb{C}$ such that 
\begin{enumerate}
\item 
 $C_\lambda(p)\cdot h^\mathbb{C}(p)\in G=\operatorname{Fix}(G^\mathbb{C},\nu)$ 
for all $(p,\lambda)\in V\times S^1$; 
\item 
 $C_\lambda(p_o)\cdot h^\mathbb{C}(p_o)\equiv\operatorname{id}$
\end{enumerate}
---that is, we want to assert that (S3) holds.  
   First, let us verify  
\[
  C_\lambda(p_o)\equiv\operatorname{id}.
\] 
   By $A_\lambda(p_o)\equiv\operatorname{id}\equiv B_\lambda(p_o)$ we conclude 
$\Lambda^+_* G^\mathbb{C}_\sigma\ni
   B^+_\lambda(p_o)=C_\lambda(p_o)^{-1}
    =B^-_\lambda(p_o)\in\Lambda^- G^\mathbb{C}_\sigma$;   
so that 
$C_\lambda(p_o)
 \in(\Lambda^+_*G^\mathbb{C}_\sigma\cap\Lambda^- G^\mathbb{C}_\sigma)
 =\{\operatorname{id}\}$, 
and $C_\lambda(p_o)\equiv\operatorname{id}$. 
   Next, we will deduce that 
\begin{equation}\label{eq-3.3.5}
\begin{array}{ll}
  (C_\lambda(q))^{-1}\cdot\nu(C_\lambda(q))\in H^\mathbb{C} 
& \mbox{for any point $(q,\lambda)\in W\times S^1$}.   
\end{array}
\end{equation}
   Since \eqref{eq-3.3.4}, \eqref{eq-3.3.2} and 
$C_\lambda=A_\lambda\cdot(B^+_\lambda)^{-1}=B_\lambda\cdot(B^-_\lambda)^{-1}$, 
we obtain     
\[
 (C_\lambda)^{-1}\cdot\nu_C(C_\lambda)
=\bigl(B_\lambda\cdot(B^-_\lambda)^{-1}\bigr)^{-1}\cdot 
  \nu_C\bigl(A_\lambda\cdot(B^+_\lambda)^{-1}\bigr)
=B^-_\lambda\cdot\nu_C((B^+_\lambda)^{-1})\in\Lambda^- G^\mathbb{C}_\sigma.  
\]  
   The above also leads to 
$(C_\lambda)^{-1}\cdot\nu_C(C_\lambda)
 =\nu_C\bigl(\{ (C_\lambda)^{-1}\cdot\nu_C(C_\lambda) \}^{-1} \bigr)
\in\nu_C(\Lambda^- G^\mathbb{C}_\sigma)=\Lambda^+ G^\mathbb{C}_\sigma$. 
   Therefore we have  
$(C_\lambda)^{-1}\cdot\nu_C(C_\lambda)
 \in(\Lambda^- G^\mathbb{C}_\sigma\cap\Lambda^+ G^\mathbb{C}_\sigma)=H^\mathbb{C}$, 
and so \eqref{eq-3.3.5} follows. 
   It remains to show that there exist an open neighborhood $V$ of $M$ at $p_o$ 
and a smooth map $h^\mathbb{C}=h^\mathbb{C}(p): V\to H^\mathbb{C}$ satisfying 
the equations (1) and (2) above. 
   Let $U_H$ and $O_\frak{h}$ denote open neighborhoods of $H^\mathbb{C}$ at 
$\operatorname{id}$ and of $\frak{h}^\mathbb{C}$ at $0$ such that 
$\exp:O_\frak{h}\to U_H$ is a diffeomorphism and $\nu(U_H)\subset U_H$. 
   Since \eqref{eq-3.3.5} and 
$(C_\lambda(p_o))^{-1}\cdot\nu(C_\lambda(p_o))=\operatorname{id}\in U_H$, there 
exists an open neighborhood $V$ ($\subset W$) of $p_o$ in $M$ such that  
$(C_\lambda(p))^{-1}\cdot\nu(C_\lambda(p))\in U_H$ for all $p\in V$. 
   Hence,   
\begin{equation}\label{eq-3.3.6}
\begin{array}{ll}
 (C_\lambda(p))^{-1}\cdot\nu(C_\lambda(p))=\exp X(p) 
& \mbox{on $V$}, 
\end{array}
\end{equation}
where $X=X(p):V\to O_\frak{h}$ is a smooth map with $X(p_o)=0$. 
   This yields  
\[
  \exp d\nu(X(p))
=\nu\bigl((C_\lambda(p))^{-1}\cdot\nu(C_\lambda(p))\bigr)\\
=\nu(C_\lambda(p))^{-1}\cdot(C_\lambda(p)) 
=\exp(-X(p)) 
\] 
and $d\nu(X(p))=-X(p)$. 
   Accordingly we conclude that (1)
$\nu(C_\lambda(p)\cdot h^\mathbb{C}(p))=C_\lambda(p)\cdot h^\mathbb{C}(p)$ 
for all $(p,\lambda)\in V\times S^1$ and (2) 
$C_\lambda(p_o)\cdot h^\mathbb{C}(p_o)\equiv\operatorname{id}$, by setting 
$h^\mathbb{C}(p):=\exp((1/2)\cdot X(p))$ (cf.\ \eqref{eq-3.3.6}).\par
   
   (S4): The arguments below will be similar to those of the proof of (S3) in 
Proposition \ref{prop-3.2.3}. 
   Define a $\frak{g}$-valued $1$-form $\beta^\lambda$ on $(V,J)$ by 
$\beta^\lambda:=(C_\lambda')^{-1}\cdot dC_\lambda'$, and express it as 
$\beta^\lambda=(\beta^\lambda)_\frak{h}+(\beta^\lambda)_\frak{m}
=(\beta^\lambda)_\frak{h}+(\beta^\lambda)_\frak{m}'+(\beta^\lambda)_\frak{m}''$,  
where $\frak{g}=\frak{h}\oplus\frak{m}$ (see (2.3.6) for 
$(\beta^\lambda)_\frak{m}'$ and $(\beta^\lambda)_\frak{m}''$). 
   Then, it suffices to verify \eqref{eq-3.3.7}: 
\begin{equation}\label{eq-3.3.7}
  \beta^\lambda=(\beta^1)_\frak{h}+\lambda^{-1}\cdot(\beta^1)_\frak{m}'
    +\lambda\cdot(\beta^1)_\frak{m}''. 
\end{equation}
   Indeed, $\beta^\lambda=(C_\lambda')^{-1}\cdot dC_\lambda'$ satisfies 
$d\beta^\lambda+(1/2)\cdot[\beta^\lambda\wedge\beta^\lambda]=0$ for any 
$\lambda\in S^1$, and so Proposition \ref{prop-2.3.3} and \eqref{eq-3.3.7} 
allow us to conclude that 
$f_\lambda=\pi\circ C_\lambda':(V,J)\to(G/H,\nabla^1)$ is a pluriharmonic map 
for every $\lambda\in S^1$.    
   Direct computation, together with 
$C_\lambda=A_\lambda\cdot(B^+_\lambda)^{-1}=B_\lambda\cdot(B^-_\lambda)^{-1}$ 
and $C_\lambda'=C_\lambda\cdot h^\mathbb{C}$, gives us 
\[
\begin{split}
  (\beta^\lambda,\beta^\lambda)
&=\bigl((C_\lambda')^{-1}\cdot dC_\lambda',\,\,
        (C_\lambda')^{-1}\cdot dC_\lambda'\bigr)\\
&=\bigl(D^+_\lambda\cdot\eta_\lambda\cdot(D^+_\lambda)^{-1}
    +D^+_\lambda\cdot d(D^+_\lambda)^{-1},\,\, 
   D^-_\lambda\cdot\tau_\lambda\cdot(D^-_\lambda)^{-1}
    +D^-_\lambda\cdot d(D^-_\lambda)^{-1}\bigr), 
\end{split}
\]
where $(D^\pm_\lambda)^{-1}:=(B^\pm_\lambda)^{-1}\cdot h^\mathbb{C}$. 
  It follows from $h^\mathbb{C}\in H^\mathbb{C}$ that 
$D^\pm_\lambda\in\Lambda^\pm G^\mathbb{C}_\sigma$. 
   Therefore, the Fourier series 
$\beta^\lambda=\sum_{k\in\mathbb{Z}}\beta_k\lambda^k$ is actually a Laurent 
polynomial of the form 
\[\label{a}\tag{a}
   \beta^\lambda
=\lambda^{-1}\cdot\beta_{-1}+\beta_0+\lambda\cdot\beta_{+1}
=\lambda^{-1}\cdot((\beta_{-1})'+(\beta_{-1})'')
  +\beta_0+\lambda\cdot((\beta_{+1})'+(\beta_{+1})'')
\]   
because the $n$-th and $m$-th Fourier coefficients of 
$D^+_\lambda\cdot\eta_\lambda\cdot(D^+_\lambda)^{-1}
    +D^+_\lambda\cdot d(D^+_\lambda)^{-1}$ 
and 
$D^-_\lambda\cdot\tau_\lambda\cdot(D^-_\lambda)^{-1}
    +D^-_\lambda\cdot d(D^-_\lambda)^{-1}$ 
are zero for all $n\leq -2$ and $2\leq m$, respectively. 
   Moreover, \eqref{a} simplifies to 
\[\label{a'}\tag{a$'$}
   \beta^\lambda
=\lambda^{-1}\cdot(\beta_{-1})'+\beta_0+\lambda\cdot(\beta_{+1})''
\]
because the $-1$st and $+1$st Fourier coefficients of 
$D^+_\lambda\cdot\eta_\lambda\cdot(D^+_\lambda)^{-1}
    +D^+_\lambda\cdot d(D^+_\lambda)^{-1}$ 
and 
$D^-_\lambda\cdot\tau_\lambda\cdot(D^-_\lambda)^{-1}
    +D^-_\lambda\cdot d(D^-_\lambda)^{-1}$ 
are holomorphic and antiholomorphic, respectively. 
   In view of \eqref{a'} and 
$\beta^\lambda\in\Lambda\frak{g}^\mathbb{C}_\sigma$ it turns out that 
$(\beta^1)_\frak{h}=\beta_0$ and 
$(\beta^1)_\frak{m}=(\beta_{-1})'+(\beta_{+1})''$. 
    Therefore \eqref{eq-3.3.7} follows from $(\beta^1)_\frak{m}'=(\beta_{-1})'$ 
and $(\beta^1)_\frak{m}''=(\beta_{+1})''$.   
\end{proof}

\section{{\bf Relation between pluriharmonic maps and para-pluriharmonic maps}}
\label{sec-4} 
   In this section, by utilizing the loop group method, we interrelate 
pluriharmonic maps with para-pluriharmonic maps. 
   We consider two real subspaces $\mathbb{A}^{2n}$ and 
$\mathbb{B}^{2n}$ of $\mathbb{C}^{2n}$ (cf.\ Subsection \ref{subsec-4.1}), 
and two symmetric closed subspaces $G_1/H_1$ and $G_2/H_2$ of 
$G^\mathbb{C}/H^\mathbb{C}$ (cf.\ Subsection \ref{subsec-4.2}), and we  
investigate the relation between certain pluriharmonic maps 
$f_1:\mathbb{A}^{2n}\to G_1/H_1$ and certain para-pluriharmonic maps 
$f_2:\mathbb{B}^{2n}\to G_2/H_2$  (cf.\ Subsection \ref{subsec-4.3}).   

\begin{center}
\unitlength=1mm
\begin{picture}(75,21)
\put(1,17){$f_1:\mathbb{A}^{2n}\longrightarrow G_1/H_1$, pluriharmonic}
\put(7,13){$\cap$}
\put(25,13){$\cap$}
\put(7,9){$\mathbb{C}^{2n}$}
\put(21,9){$G^\mathbb{C}/H^\mathbb{C}$}
\put(7,5){$\cup$}
\put(25,5){$\cup$}
\put(1,1){$f_2:\mathbb{B}^{2n}\longrightarrow G_2/H_2$, para-pluriharmonic}
\end{picture}   
\end{center}

\subsection{The real subspaces $\mathbb{A}^{2n}$ and $\mathbb{B}^{2n}$ of 
$\mathbb{C}^{2n}$}\label{subsec-4.1}  
   Let $\mathbb{A}^{2n}$ and $\mathbb{B}^{2n}$ be the real subspaces of 
$\mathbb{C}^{2n}$ given by 
\[
\begin{split}
  \mathbb{A}^{2n}:
&=\{(z^1,\cdots,z^n,w^1,\cdots,w^n)\in\mathbb{C}^n\times\mathbb{C}^n 
  \,|\, \mbox{$\bar{z}^a=w^a$ for all $1\leq a\leq n$} \}\\
&=\{(z,w)\in\mathbb{C}^n\times\mathbb{C}^n \,|\, w=\bar{z}\},\\  
  \mathbb{B}^{2n}:
&=\{(z^1,\cdots,z^n,w^1,\cdots,w^n)\in\mathbb{C}^n\times\mathbb{C}^n 
  \,|\, \mbox{$z^a=\bar{z}^a$ and $w^a=\bar{w}^a$ for all $1\leq a\leq n$}\}\\
&=\mathbb{R}^n\times\mathbb{R}^n.  
\end{split} 
\] 
   Let $(x^1,\cdots,x^n,y^1,\cdots,y^n)$ denote the global coordinate system 
on $\mathbb{B}^{2n}$ defined by $x^a:={\rm Re}(z^a)$ and 
$y^a:={\rm Re}(w^a)$ for $1\leq a\leq n$.    
   Define smooth $(1,1)$-tensor fields $J$ on $\mathbb{A}^{2n}$ and $I$ on 
$\mathbb{B}^{2n}$ by  
\[
 J\Big(\frac{\partial}{\partial z^a}\Big)
  :=i\frac{\partial}{\partial z^a},\,
  J\Big(\frac{\partial}{\partial\bar{z}^a}\Big)
  :=-i\frac{\partial}{\partial\bar{z}^a}
\mbox{ and } 
 I\Big(\frac{\partial}{\partial x^a}\Big)
   :=\frac{\partial}{\partial x^a},\, 
  I\Big(\frac{\partial}{\partial y^a}\Big)
   :=-\frac{\partial}{\partial y^a}. 
\] 
   Then $(\mathbb{A}^{2n},J)$ and $(\mathbb{B}^{2n},I)$ are simply connected 
complex and para-complex manifolds, respectively. 
   Henceforth, for the natural coordinate systems 
$(z^1,\cdots,z^n,\bar{z}^1,\cdots,\bar{z}^n)$ on $\mathbb{A}^{2n}$, 
$(x^1,\cdots,x^n,y^1,\cdots,y^n)$ on $\mathbb{B}^{2n}$ and 
$(z^1,\cdots,z^n,w^1,\cdots,w^n)$ on $\mathbb{C}^{2n}$, we will use the 
notation $({\bf z},{\bf \bar{z}})$, $({\bf x},{\bf y})$ and $({\bf z},{\bf w})$, 
respectively.

\subsection{The symmetric subspaces $G_1/H_1$ and $G_2/H_2$ of 
$G^\mathbb{C}/H^\mathbb{C}$}\label{subsec-4.2} 
   In this subsection, we introduce two symmetric subspaces $G_1/H_1$ and 
$G_2/H_2$ of $G^\mathbb{C}/H^\mathbb{C}$. 
   Let $G^\mathbb{C}$ be a simply connected, simple, complex linear algebraic 
subgroup of $SL(m,\mathbb{C})$, let $\sigma$ be a holomorphic involution of 
$G^\mathbb{C}$, and let $\nu_1$ and $\nu_2$ be antiholomorphic involutions of 
$G^\mathbb{C}$ satisfying $[\sigma,\nu_1]=[\sigma,\nu_2]=[\nu_1,\nu_2]=0$. 
   Then we define $H^\mathbb{C}$, $G_i$, $H_i$, $\pi_i$ ($i=1,2$) and 
$\frak{g}_2$ as follows: 
\begin{enumerate}[(4.2.1)] 
\item 
  $H^\mathbb{C}:=\operatorname{Fix}(G^\mathbb{C},\sigma)$, 
\item 
  $G_i:=\operatorname{Fix}(G^\mathbb{C},\nu_i)$, 
\item 
  $H_i:=\operatorname{Fix}(G_i,\sigma)=\operatorname{Fix}(H^\mathbb{C},\nu_i)$, 
\item 
  $\pi_i$: the projection from $G_i$ onto $G_i/H_i$, 
\item 
  $\frak{g}_2:=\operatorname{Lie}G_2$. 
\end{enumerate} 
   Clearly, $(G^\mathbb{C}/H^\mathbb{C},\sigma)$ is an affine symmetric space, 
and both $G_1/H_1$ and $G_2/H_2$ are symmetric closed subspaces of 
$(G^\mathbb{C}/H^\mathbb{C},\sigma)$ (ref.\ \cite[p.\ 227]{Ko-No} for the 
definition of symmetric closed subspace). 
  In particular, $(G_i/H_i,\sigma|_{G_i})$, $i=1,2$, are affine symmetric 
spaces.

\subsection{The main result}\label{subsec-4.3} 
   With the notation in Subsections \ref{subsec-4.1} and \ref{subsec-4.2} we 
assert the following (see \eqref{eq-3.3.1} for $(\nu_1)_C$):  
\begin{theorem}\label{thm-4.3.1} 
   Let 
$(\eta_\theta,\tau_\theta)=(\eta_\theta({\bf x}),\tau_\theta({\bf y}))
 \in\widetilde{\mathcal{P}}_+(\frak{g}_2)\times\widetilde{\mathcal{P}}_-(\frak{g}_2)$ 
be a real analytic, para-pluriharmonic potential on $(\mathbb{B}^{2n},I)$, 
and let 
$(f_2)_\theta=\pi_2\circ C_\theta({\bf x},{\bf y}):(W,I)\to(G_2/H_2,\nabla^1)$ 
denote the $\mathbb{R}^+$-family of para-pluriharmonic maps constructed 
from $(\eta_\theta,\tau_\theta)$ in the neighborhood $W$ of 
$\mathbb{B}^{2n}$ at $({\bf 0},{\bf 0})$ in Proposition $\ref{prop-3.2.3}$.  
   Suppose that $(\eta_\theta,\tau_\theta)$ satisfies the morphing condition 
\[\label{M}\tag{M}
  d(\nu_1)_C(\eta_\lambda({\bf z}))=\tau_\lambda({\bf \bar{z}}).  
\]
   Then, there exist an open neighborhood $V$ of $\mathbb{A}^{2n}$ at 
$({\bf 0},{\bf 0})$ and a smooth map 
$h^\mathbb{C}({\bf z},{\bf \bar{z}}):V\to H^\mathbb{C}$ such that 
\begin{enumerate} 
\item[{\rm (1)}] 
  $C_\lambda'({\bf z},{\bf \bar{z}})\in G_1$ for all 
$({\bf z},{\bf \bar{z}};\lambda)\in V\times S^1;$ 
\item[{\rm (2)}] 
 $(f_1)_\lambda:=\pi_1\circ C_\lambda'({\bf z},{\bf \bar{z}}):
(V,J)\to(G_1/H_1,\nabla^1)$    
is an $S^1$-family of pluriharmonic maps with 
$C_\lambda'({\bf 0},{\bf 0})\equiv\operatorname{id}$, where 
$C_\lambda'({\bf z},{\bf \bar{z}})
 :=C_\lambda({\bf z},{\bf \bar{z}})\cdot h^\mathbb{C}({\bf z},{\bf \bar{z}})$. 
\end{enumerate}  
\end{theorem}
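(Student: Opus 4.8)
The plan is to recognize the morphing condition \eqref{M} as \emph{verbatim} the defining relation of a pluriharmonic potential (Definition~\ref{def-3.3.1}) for the data obtained by the substitution ${\bf x}\mapsto{\bf z}$, ${\bf y}\mapsto{\bf \bar{z}}$, $\theta\mapsto\lambda$, and then to reduce the whole assertion to Proposition~\ref{prop-3.3.2}. First I would use that the potential $(\eta_\theta({\bf x}),\tau_\theta({\bf y}))$ is real analytic to continue every object of Proposition~\ref{prop-3.2.3} to complex arguments: the solutions $A^-_\theta({\bf x})$, $A^+_\theta({\bf y})$ of (S1) extend to maps $A^-_\mu({\bf z})$, $A^+_\mu({\bf w})$ holomorphic in $({\bf z},{\bf w})$ and in $\mu\in\mathbb{C}^*$ on a common neighborhood of $({\bf 0},{\bf 0})$ in $\mathbb{C}^{2n}\times\mathbb{C}^*$. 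Restricting to the slice $\mathbb{A}^{2n}=\{{\bf w}={\bf \bar{z}}\}$ and to $\lambda\in S^1$ produces exactly the data $A_\lambda({\bf z}):=A^-_\lambda({\bf z})$ and $B_\lambda({\bf \bar{z}}):=A^+_\lambda({\bf \bar{z}})$ of step (S1) in Proposition~\ref{prop-3.3.2} for the morphed potential $(\eta_\lambda({\bf z}),\tau_\lambda({\bf \bar{z}}))$, since $\eta_\theta$ involves only $d{\bf x}\mapsto d{\bf z}$ and $\tau_\theta$ only $d{\bf y}\mapsto d{\bf \bar{z}}$, turning a para--holomorphic form in ${\bf x}$ into a holomorphic form in ${\bf z}$ and a para--antiholomorphic form in ${\bf y}$ into an antiholomorphic form in ${\bf \bar{z}}$.

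Next I would identify the two framings. Rewriting the two--sided Iwasawa factorization $(A,B)=(C,C)(B^+,B^-)$ of (S2), in either Proposition, as the single Birkhoff decomposition $A^{-1}B=(B^+)^{-1}B^-\in\Lambda^+_*G^\mathbb{C}_\sigma\cdot\Lambda^-G^\mathbb{C}_\sigma$ followed by $C=A\cdot(B^+)^{-1}$, I would invoke the holomorphicity and uniqueness of the Birkhoff decomposition (Theorems~\ref{thm-3.1.2} and~\ref{thm-3.1.6}) to conclude that $C$ depends holomorphically on $(A,B)$ and therefore continues together with them. By uniqueness the complex Birkhoff factors of a $\widetilde{\Lambda}G_\sigma$--element lie in $\widetilde{\Lambda}^\pm G_\sigma$, so a single holomorphic object $C_\mu({\bf z},{\bf w})$ restricts on $\mathbb{B}^{2n}$, $\theta\in\mathbb{R}^+$, to the para--pluriharmonic framing $C_\theta({\bf x},{\bf y})$ of Proposition~\ref{prop-3.2.3}, and on $\mathbb{A}^{2n}$, $\lambda\in S^1$, to the pluriharmonic framing $C_\lambda({\bf z},{\bf \bar{z}})$. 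The morphing dictionary $d{\bf x}\mapsto d{\bf z}$, $d{\bf y}\mapsto d{\bf \bar{z}}$ sends $\alpha_\frak{m}^+,\alpha_\frak{m}^-$ to $\alpha_\frak{m}',\alpha_\frak{m}''$, so the normal form~\eqref{eq-3.2.2} for $\beta^\theta=C_\theta^{-1}\cdot dC_\theta$ becomes precisely the normal form~\eqref{eq-3.3.7} for $\beta^\lambda=C_\lambda^{-1}\cdot dC_\lambda$.

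Finally I would read off the reality. The morphing condition \eqref{M} is exactly the relation $d(\nu_1)_C(\eta_\lambda)=\tau_\lambda$ of Definition~\ref{def-3.3.1}, so $(\eta_\lambda({\bf z}),\tau_\lambda({\bf \bar{z}}))$ is a genuine pluriharmonic potential for the second--kind real form $\operatorname{Fix}(\Lambda G^\mathbb{C}_\sigma,(\nu_1)_C)$, and by the previous paragraph its steps (S1)--(S2) are solved by the continued $A_\lambda$, $B_\lambda$ and the continued framing $C_\lambda({\bf z},{\bf \bar{z}})$. Hence step (S3) of Proposition~\ref{prop-3.3.2} applies without change: from \eqref{M} one obtains $(\nu_1)_C(A_\lambda)=B_\lambda$ (the analogue of~\eqref{eq-3.3.4}), whence $C_\lambda^{-1}\cdot(\nu_1)_C(C_\lambda)\in H^\mathbb{C}$ (the analogue of~\eqref{eq-3.3.5}), and the gauge $h^\mathbb{C}({\bf z},{\bf \bar{z}})$ manufactured there makes $C_\lambda':=C_\lambda\cdot h^\mathbb{C}$ take values in $G_1=\operatorname{Fix}(G^\mathbb{C},\nu_1)$ for all $\lambda\in S^1$, with $C_\lambda'({\bf 0},{\bf 0})\equiv\operatorname{id}$. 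Combining this with the normal form~\eqref{eq-3.3.7} and Proposition~\ref{prop-2.3.3}, exactly as in step (S4), shows that $(f_1)_\lambda=\pi_1\circ C_\lambda'$ is an $S^1$--family of pluriharmonic maps; this is precisely (1) and (2).

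The hard part will be the framing identification in the second paragraph: one must verify that the real--form Iwasawa decompositions of Theorems~\ref{thm-3.1.5} and~\ref{thm-3.1.1}, taken in the two \emph{different} real forms $\widetilde{\Lambda}G_\sigma$ (first kind, governing $\mathbb{B}^{2n}$) and $\operatorname{Fix}(\Lambda G^\mathbb{C}_\sigma,(\nu_1)_C)$ (second kind, governing $\mathbb{A}^{2n}$), are simultaneous restrictions of the \emph{one} holomorphic complex Birkhoff decomposition, and that the neighborhoods $W\subset\mathbb{B}^{2n}$ and $V\subset\mathbb{A}^{2n}$ can be taken as traces of a single neighborhood of $({\bf 0},{\bf 0})$ in $\mathbb{C}^{2n}$ on which both the analytic continuation and the open Birkhoff cell condition $A_\mu^{-1}B_\mu\in\Lambda^+_*G^\mathbb{C}_\sigma\cdot\Lambda^-G^\mathbb{C}_\sigma$ persist.
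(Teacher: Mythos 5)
Your proposal is correct and follows essentially the same route as the paper: analytically continue the solutions of (S1) and the factors of the Iwasawa/Birkhoff decomposition from $\mathbb{B}^{2n}$ to a complex neighborhood $\widetilde{W}$ (using that $\mathbb{B}^{2n}$ is totally real), restrict to $\mathbb{A}^{2n}$, observe that the morphing condition \eqref{M} turns $(\eta_\lambda({\bf z}),\tau_\lambda({\bf \bar z}))$ into a pluriharmonic potential in the sense of Definition \ref{def-3.3.1}, and then invoke the proof of Proposition \ref{prop-3.3.2} to produce $V$ and $h^\mathbb{C}$. Your extra remark that the framing $C$ continues because it is obtained from $(A,B)$ by the holomorphic Birkhoff factorization $C=A\cdot(B^+)^{-1}$ is a slightly more explicit justification of a step the paper states without elaboration, but it is the same argument.
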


\begin{remark}\label{rem-4.3.2} 
   (i) Since both $\eta_\theta({\bf x})$ and $\tau_\theta({\bf y})$ are analytic on 
$\mathbb{B}^{2n}$ and $\mathbb{B}^{2n}$ is a totally real submanifold of 
$\mathbb{C}^{2n}$, one can uniquely extend them as holomorphic $1$-forms 
$\eta_\theta({\bf z})$ and $\tau_\theta({\bf w})$ to an open subset 
$\widetilde{W}$ of $\mathbb{C}^{2n}$ such that 
$\mathbb{B}^{2n}\subset\widetilde{W}$. 
    For this reason, the notation $\eta_\lambda({\bf z})$ and 
$\tau_\lambda({\bf \bar{z}})$ in Theorem \ref{thm-4.3.1} makes sense.\par
 
   (ii) Similarly, one can verify that the notation 
$C_\lambda({\bf z},{\bf \bar{z}})$, used in Theorem \ref{thm-4.3.1}, makes 
sense. 
\end{remark}

\begin{proof}[Proof of Theorem {\rm \ref{thm-4.3.1}}] 
   Let 
$(A_\lambda({\bf x}),B_\lambda({\bf y}))
 =(C_\lambda({\bf x},{\bf y}),C_\lambda({\bf x},{\bf y}))
  \cdot(B^+_\lambda({\bf x},{\bf y}),B^-_\lambda({\bf x},{\bf y}))$ 
denote the Iwasawa decomposition in (S2) of Proposition \ref{prop-3.2.3}. 
   Note that $A_\lambda({\bf x})$ and $B_\lambda({\bf y})$ satisfy 
\[ 
\begin{array}{lll}
  (A_\lambda^{-1}\cdot dA_\lambda)({\bf x})=\eta_\lambda({\bf x}), 
& (B_\lambda^{-1}\cdot dB_\lambda)({\bf y})=\tau_\lambda({\bf y}), 
& A_\lambda({\bf 0})\equiv\operatorname{id}\equiv B_\lambda({\bf 0}).
\end{array}
\] 
   Since $(\eta_\lambda({\bf x}),\tau_\lambda({\bf y}))$ is analytic, we deduce 
that $A_\lambda({\bf x})$, $B_\lambda({\bf y})$, $C_\lambda({\bf x},{\bf y})$ 
and $B^\pm_\lambda({\bf x},{\bf y})$ are analytic with respect to the variables 
${\bf x}$ and ${\bf y}$. 
   Therefore these matrices have unique analytic extensions 
$A_\lambda({\bf z})$, $B_\lambda({\bf w})$, $C_\lambda({\bf z},{\bf w})$ and 
$B^\pm_\lambda({\bf z},{\bf w})$ to an open neighborhood $\widetilde{W}$ of 
$\mathbb{C}^{2n}$ at $({\bf 0},{\bf 0})$, respectively, because 
$\mathbb{B}^{2n}$ is a totally real submanifold of $\mathbb{C}^{2n}$. 
   Then on the neighborhood $\widetilde{W}\cap\mathbb{A}^{2n}$ of 
$\mathbb{A}^{2n}$ at $({\bf 0},{\bf 0})$, we confirm that 
$A_\lambda({\bf z})$ and $B_\lambda({\bf \bar{z}})$ satisfy 
$(A_\lambda^{-1}\cdot dA_\lambda)({\bf z})=\eta_\lambda({\bf z})$, 
$(B_\lambda^{-1}\cdot dB_\lambda)({\bf \bar{z}})=\tau_\lambda({\bf \bar{z}})$ 
and 
$A_\lambda({\bf 0})\equiv\operatorname{id}\equiv B_\lambda({\bf 0})$; 
and furthermore, 
$(A_\lambda({\bf z}),B_\lambda({\bf \bar{z}}))
 =(C_\lambda({\bf z},{\bf \bar{z}}),C_\lambda({\bf z},{\bf \bar{z}}))
  \cdot(B^+_\lambda({\bf z},{\bf \bar{z}}),
      B^-_\lambda({\bf z},{\bf \bar{z}}))$ 
becomes the Iwasawa decomposition in (S2) of Proposition \ref{prop-3.3.2}, 
where we remark that $(\eta_\lambda({\bf z}),\tau_\lambda({\bf \bar{z}}))$ 
satisfy 
$(\eta_\lambda({\bf z}),\tau_\lambda({\bf \bar{z}}))
 \in\mathcal{P}'(\frak{g}^\mathbb{C})\times\mathcal{P}''(\frak{g}^\mathbb{C})$ 
and $d(\nu_1)_C(\eta_\lambda({\bf z}))=\tau_\lambda({\bf \bar{z}})$.   
   Consequently, the proof of Proposition \ref{prop-3.3.2} assures that there 
exist an open neighborhood $V$ $\subset\widetilde{W}\cap\mathbb{A}^{2n}$ of 
$\mathbb{A}^{2n}$ at $({\bf 0},{\bf 0})$ and a smooth map 
$h^\mathbb{C}({\bf z},{\bf \bar{z}}):V\to H^\mathbb{C}$ satisfying the 
conditions {\rm (1)} and {\rm (2)}.   
\end{proof}

\section{Appendix}\label{sec-5}
   We will interrelate concretely some pluriharmonic maps with para-pluriharmonic 
maps by means of Theorem \ref{thm-4.3.1}. 
   In Subsection \ref{subsec-5.2} we will focus on harmonic maps and Lorentz 
harmonic maps. 
   This will yield a relation between CMC-surfaces in $\mathbb{R}^3$ and 
CMC-surface in $\mathbb{R}^3_1$. 

\subsection{A relation between certain pluriharmonic maps and certain 
para-pluriharmonic maps}\label{subsec-5.1} 

\subsubsection{$f_1:\mathbb{A}^4\to Gr_{2,4}(\mathbb{C})$ $\Longleftrightarrow$ 
$f_2:\mathbb{B}^4\to Gr_{2,4}(\mathbb{C}')$}\label{subsec-5.1.1}    
   Following the main result of this paper, we construct in this subsection a 
pluriharmonic map 
$f_1(z^1,z^2,\bar{z}^1,\bar{z}^2):\mathbb{A}^4\to Gr_{2,4}(\mathbb{C})$ and 
a para-pluriharmonic map 
$f_2(x^1,x^2,y^1,y^2):\mathbb{B}^4\to Gr_{2,4}(\mathbb{C}')$ from one potential 
\eqref{eq-5.1.1} below, where $Gr_{2,4}(\mathbb{C})$ (resp.\ 
$Gr_{2,4}(\mathbb{C}')$) denotes a complex (resp.\ para-complex) Grassmann 
manifold.    
   In this subsection, we will use the following notation: 
\begin{enumerate}
\item[(5.1.1)]  
   $G^\mathbb{C}=SL(4,\mathbb{C})$, 
\item[(5.1.2)]  
   $\sigma(A):=I_{2,2}\cdot A\cdot I_{2,2}$ for $A\in G^\mathbb{C}$,  
where $I_{2,2}:=\operatorname{diag}(-1,-1,1,1)$, 
\item[(5.1.3)]  
   $\nu_1(A):={}^t(\overline{A})^{-1}$ for $A\in G^\mathbb{C}$, 
\item[(5.1.4)]  
   $\nu_2(A):=\overline{A}$ for $A\in G^\mathbb{C}$,  
\item[(5.1.5)]  
  $G^\mathbb{C}/H^\mathbb{C}
   =SL(4,\mathbb{C})/S(GL(2,\mathbb{C})\times GL(2,\mathbb{C}))$, 
\item[(5.1.6)]  
  $G_1/H_1=SU(4)/S(U(2)\times U(2))\simeq Gr_{2,4}(\mathbb{C})$, 
\item[(5.1.7)]  
  $G_2/H_2=SL(4,\mathbb{R})/S(GL(2,\mathbb{R})\times GL(2,\mathbb{R}))
  \simeq Gr_{2,4}(\mathbb{C}')$,    
\item[(5.1.8)]  
  $\pi_i$: the projection from $G_i$ onto $G_i/H_i$ ($i=1,2$), 
\item[(5.1.9)]  
  $\frak{g}_2:=\operatorname{Lie}G_2=\frak{sl}(4,\mathbb{R})$.     
\end{enumerate}\par 
\setcounter{equation}{9} 

   First, we define a 
$\widetilde{\Lambda}_{-1,\infty}(\frak{g}_2)_\sigma$-valued, real analytic 
para-holomorphic $1$-form $\eta_\theta(x^1,x^2)$ on $(\mathbb{B}^4,I)$ by 
\begin{equation}\label{eq-5.1.1}
\eta_\theta(x^1,x^2):=
 \theta^{-1}\begin{pmatrix}
             0 & 0 & 0 & 1 \\
             0 & 0 & 0 & 0 \\
             0 & 0 & 0 & 0 \\
            -1 & 0 & 0 & 0 
            \end{pmatrix}dx^1
+\theta^{-1}\begin{pmatrix}
             0 & 0 & 0 & 0 \\
             0 & 0 & 1 & 0 \\
             0 & 1 & 0 & 0 \\
             0 & 0 & 0 & 0 
            \end{pmatrix}dx^2.   
\end{equation}
   Taking the morphing condition \eqref{M} of Theorem \ref{thm-4.3.1} into 
consideration, we define a 
$\widetilde{\Lambda}_{-\infty,1}(\frak{g}_2)_\sigma$-valued, real analytic 
para-antiholomorphic $1$-form $\tau_\theta(y^1,y^2)$ on $(\mathbb{B}^4,I)$ by 
setting   
\[
\tau_\theta(y^1,y^2):=
 \theta\begin{pmatrix}
             0 & 0 & 0 & 1 \\
             0 & 0 & 0 & 0 \\
             0 & 0 & 0 & 0 \\
            -1 & 0 & 0 & 0 
        \end{pmatrix}dy^1
+\theta\begin{pmatrix}
             0 &  0 &  0 & 0 \\
             0 &  0 & -1 & 0 \\
             0 & -1 &  0 & 0 \\
             0 &  0 &  0 & 0 
       \end{pmatrix}dy^2. 
\]
   Hence we obtain the real analytic, para-pluriharmonic potential 
$(\eta_\theta(x^1,x^2),\tau_\theta(y^1,y^2))$.\par

   From Proposition \ref{prop-3.2.3} we obtain a para-pluriharmonic map 
$f_2:(\mathbb{B}^4,I)\to G_2/H_2\simeq Gr_{2,4}(\mathbb{C}')$.\par
 
   (S1): Solve the two initial value problems: 
\[
\begin{array}{lll}
  (A^-_\theta)^{-1}\cdot dA^-_\theta=\eta_\theta, 
& (A^+_\theta)^{-1}\cdot dA^+_\theta=\tau_\theta, 
& A^\pm_\theta(0,0)\equiv\operatorname{id}.
\end{array}
\]  
   The solutions are  
\[
\begin{array}{l}
  A^-_\theta(x^1,x^2)
=\begin{pmatrix}
  \cos(x^1/\theta) & 0 & 0 & \sin(x^1/\theta) \\
  0 & \cosh(x^2/\theta) & \sinh(x^2/\theta) & 0 \\
  0 & \sinh(x^2/\theta) & \cosh(x^2/\theta) & 0 \\
  -\sin(x^1/\theta) & 0 & 0 & \cos(x^1/\theta) \\  
 \end{pmatrix},\\
  A^+_\theta(y^1,y^2)
=\begin{pmatrix}
  \cos(\theta y^1) & 0 & 0 & \sin(\theta y^1) \\
  0 & \cosh(-\theta y^2) & \sinh(-\theta y^2) & 0 \\
  0 & \sinh(-\theta y^2) & \cosh(-\theta y^2) & 0 \\
  -\sin(\theta y^1) & 0 & 0 & \cos(\theta y^1) \\  
 \end{pmatrix}.
\end{array} 
\]
   (S2): Factorize 
$(A^-_\theta,A^+_\theta)
 \in\widetilde{\Lambda}^-_*(G_2)_\sigma
     \times\widetilde{\Lambda}^+_*(G_2)_\sigma$    
in the Iwasawa decomposition Theorem \ref{thm-3.1.5}:   
\[
 (A^-_\theta,A^+_\theta)=(C_\theta,C_\theta)\cdot(B^+_\theta,B^-_\theta),
\] 
where $C_\theta\in\widetilde{\Lambda}(G_2)_\sigma$, 
$B^+_\theta\in\widetilde{\Lambda}^+_*(G_2)_\sigma$ and 
$B^-_\theta\in\widetilde{\Lambda}^-(G_2)_\sigma$.  
   Here, $B^\pm_\theta$ and $C_\theta$ are given by 
$B^\pm_\theta=(A^\pm_\theta)^{-1}$ and 
\begin{multline*}
 C_\theta(x^1,x^2,y^1,y^2)\\
=\begin{pmatrix}
  \cos(x^1/\theta+\theta y^1) & 0 & 0 & \sin(x^1/\theta+\theta y^1) \\
  0 & \cosh(x^2/\theta-\theta y^2) & \sinh(x^2/\theta-\theta y^2) & 0 \\
  0 & \sinh(x^2/\theta-\theta y^2) & \cosh(x^2/\theta-\theta y^2) & 0 \\
  -\sin(x^1/\theta+\theta y^1) & 0 & 0 & \cos(x^1/\theta+\theta y^1) \\  
  \end{pmatrix}.
\end{multline*} 
   (S3): The last step of Proposition \ref{prop-3.2.3} assures  
\[
\mbox{
 $(f_2)_\theta:=\pi_2\circ C_\theta(x^1,x^2,y^1,y^2): 
  (\mathbb{B}^4,I)\to Gr_{2,4}(\mathbb{C}')$ 
  is para-pluriharmonic}
\]
for every $\theta\in\mathbb{R}^+$.\par

   We will construct a pluriharmonic map 
$f_1:(\mathbb{A}^4,J)\to G_1/H_1\simeq Gr_{2,4}(\mathbb{C})$ from 
$C_\theta(x^1,x^2,y^1,y^2)$ given above.  
   Substituting $\lambda$, $z^i$ and $\bar{z}^i$ for $\theta$, $x^i$ and $y^i$, 
respectively ($i=1,2$) we obtain 
\begin{multline*}
 C_\lambda(z^1,z^2,\bar{z}^1,\bar{z}^2)\\
=\begin{pmatrix}
  \cos(z^1/\lambda+\lambda\bar{z}^1) & 0 & 0 & \sin(z^1/\lambda+\lambda\bar{z}^1) \\
  0 & \cosh(z^2/\lambda-\lambda\bar{z}^2) & \sinh(z^2/\lambda-\lambda\bar{z}^2) & 0 \\
  0 & \sinh(z^2/\lambda-\lambda\bar{z}^2) & \cosh(z^2/\lambda-\lambda\bar{z}^2) & 0 \\
  -\sin(z^1/\lambda+\lambda\bar{z}^1) & 0 & 0 & \cos(z^1/\lambda+\lambda\bar{z}^1) \\  
  \end{pmatrix}
\end{multline*}
for $C_\theta(x^1,x^2,y^1,y^2)$. 
   Then $C_\lambda(z^1,z^2,\bar{z}^1,\bar{z}^2)\in G_1=SU(4)$ for all 
$(z^1,z^2,\bar{z}^1,\bar{z}^2;\lambda)\in\mathbb{A}^4\times S^1$ because 
$z^1/\lambda+\lambda\bar{z}^1$ is a real number and 
$z^2/\lambda-\lambda\bar{z}^2$ is a purely imaginary number. 
   Hence, we conclude that 
\[
\mbox{
 $(f_1)_\lambda:=\pi_1\circ C_\lambda(z^1,z^2,\bar{z}^1,\bar{z}^2): 
  (\mathbb{A}^4,J)\to Gr_{2,4}(\mathbb{C})$ 
  is a pluriharmonic map}
\]
for every $\lambda\in S^1$. 
   Consequently, we have constructed a pluriharmonic map 
$f_1:\mathbb{A}^4\to Gr_{2,4}(\mathbb{C})$ and a para-pluriharmonic map 
$f_2:\mathbb{B}^4\to Gr_{2,4}(\mathbb{C}')$ from the potential \eqref{eq-5.1.1}.

\begin{center}
\fbox{
\begin{minipage}{130mm}
$(f_1)_\lambda=\pi_1\circ C_\lambda(z^1,z^2,\bar{z}^1,\bar{z}^2): 
  (\mathbb{A}^4,J)\to Gr_{2,4}(\mathbb{C})$ 
is pluriharmonic\par
\centerline{$\Updownarrow$} 
$(f_2)_\theta=\pi_2\circ C_\theta(x^1,x^2,y^1,y^2): 
  (\mathbb{B}^4,I)\to Gr_{2,4}(\mathbb{C}')$ 
  is para-pluriharmonic 
\end{minipage}
}
\end{center}

\subsubsection{$f_1:\mathbb{A}^4\to S^4$ $\Longleftrightarrow$ 
$f_2:\mathbb{B}^4\to H^4$}\label{subsec-5.1.2} 
   In this subsection, we will construct a pluriharmonic map 
$f_1(z^1,z^2,\bar{z}^1,\bar{z}^2):\mathbb{A}^4\to S^4$ and a para-pluriharmonic 
map $f_2(x^1,x^2,y^1,y^2):\mathbb{B}^4\to H^4$ by arguments similar to those in 
Subsection \ref{subsec-5.1.1}. 
   Here $S^4$ and $H^4$ denote a sphere and a upper half space of dimension 
$4$, respectively.    
   Henceforth, we will use the following notation: 
\begin{enumerate} 
\item[(5.1.11)] 
   $G^\mathbb{C}=Sp(2,\mathbb{C})$ (see \cite[p.\ 445]{He} for 
$Sp(2,\mathbb{C})$), 
\item[(5.1.12)] 
   $\sigma(A):=K_{1,1}\cdot A\cdot K_{1,1}$ for $A\in G^\mathbb{C}$,  
where $K_{1,1}:=\operatorname{diag}(-1,1,-1,1)$, 
\item[(5.1.13)] 
   $\nu_1(A):={}^t(\overline{A})^{-1}$ for $A\in G^\mathbb{C}$, 
\item[(5.1.14)] 
   $\nu_2(A):=K_{1,1}\cdot{}^t(\overline{A})^{-1}\cdot K_{1,1}$ for 
$A\in G^\mathbb{C}$,  
\item[(5.1.15)] 
  $G^\mathbb{C}/H^\mathbb{C}
   =Sp(2,\mathbb{C})/(Sp(1,\mathbb{C})\times Sp(1,\mathbb{C}))$, 
\item[(5.1.16)] 
  $G_1/H_1=Sp(2)/(Sp(1)\times Sp(1))\simeq S^4$, 
\item[(5.1.17)] 
  $G_2/H_2=Sp(1,1)/(Sp(1)\times Sp(1))\simeq H^4$,    
\item[(5.1.18)] 
  $\pi_i$: the projection from $G_i$ onto $G_i/H_i$ ($i=1,2$), 
\item[(5.1.19)] 
  $\frak{g}_2:=\operatorname{Lie}G_2=\frak{sp}(1,1)$.     
\end{enumerate}
\setcounter{equation}{19}

   Define a $\widetilde{\Lambda}_{-1,\infty}(\frak{g}_2)_\sigma$-valued 
para-holomorphic $1$-form $\eta_\theta(x^1,x^2)$ on $(\mathbb{B}^4,I)$ by 
\[
  \eta_\theta(x^1,x^2):=\theta^{-1}
  \begin{pmatrix} 
  0 & 1 & 0 & 0 \\
  1 & 0 & 0 & 0 \\
  0 & 0 & 0 & -1 \\
  0 & 0 & -1 & 0 \\
  \end{pmatrix}dx^1 +\theta
  \begin{pmatrix} 
  0 & -1 & 0 & 0 \\
  -1 & 0 & 0 & 0 \\
  0 & 0 & 0 & 1 \\
  0 & 0 & 1 & 0 \\
  \end{pmatrix}dx^2.  
\]
   In view of the morphing condition \eqref{M}, it is natural that one defines 
a $\widetilde{\Lambda}_{-\infty,1}(\frak{g}_2)_\sigma$-valued 
para-antiholomorphic $1$-form $\tau_\theta(y^1,y^2)$ as follows:    
\[
  \tau_\theta(y^1,y^2):=\theta
  \begin{pmatrix} 
  0 & -1 & 0 & 0 \\
  -1 & 0 & 0 & 0 \\
  0 & 0 & 0 & 1 \\
  0 & 0 & 1 & 0 \\
  \end{pmatrix}dy^1 +\theta^{-1}
  \begin{pmatrix} 
  0 & 1 & 0 & 0 \\
  1 & 0 & 0 & 0 \\
  0 & 0 & 0 & -1 \\
  0 & 0 & -1 & 0 \\
  \end{pmatrix}dy^2.  
\]
   Let us solve the two initial value problems: 
$(A^-_\theta)^{-1}\cdot dA^-_\theta=\eta_\theta$ and 
$(A^+_\theta)^{-1}\cdot dA^+_\theta=\tau_\theta$ with 
$A^\pm_\theta(0,0)\equiv\operatorname{id}$, and factorize 
$(A^-_\theta,A^+_\theta)\in\widetilde{\Lambda}(G_2)_\sigma
  \times\widetilde{\Lambda}(G_2)_\sigma$    
in the Iwasawa decomposition (cf.\ Theorem \ref{thm-3.1.5}):   
$(A^-_\theta,A^+_\theta)=(C_\theta,C_\theta)\cdot(B^+_\theta,B^-_\theta)$,
where $C_\theta\in\widetilde{\Lambda}(G_2)_\sigma$, 
$B^+_\theta\in\widetilde{\Lambda}^+_*(G_2)_\sigma$ and 
$B^-_\theta\in\widetilde{\Lambda}^-(G_2)_\sigma$. 
   Then, it follows that 
\allowdisplaybreaks{
\begin{align*}
&
A^-_\theta(x^1,x^2)
 =\begin{pmatrix} 
 \cosh(\frac{x^1-\theta^2x^2}{\theta}) & \sinh(\frac{x^1-\theta^2x^2}{\theta}) & 0 & 0 \\
 \sinh(\frac{x^1-\theta^2x^2}{\theta}) & \cosh(\frac{x^1-\theta^2x^2}{\theta}) & 0 & 0 \\
 0 & 0 & \cosh(\frac{x^1-\theta^2x^2}{\theta}) & -\sinh(\frac{x^1-\theta^2x^2}{\theta}) \\
 0 & 0 & -\sinh(\frac{x^1-\theta^2x^2}{\theta}) & \cosh(\frac{x^1-\theta^2x^2}{\theta}) \\
 \end{pmatrix},\\
&
A^+_\theta(y^1,y^2)
 =\begin{pmatrix} 
 \cosh(\frac{\theta^2y^1-y^2}{\theta}) & -\sinh(\frac{\theta^2y^1-y^2}{\theta}) & 0 & 0 \\
 -\sinh(\frac{\theta^2y^1-y^2}{\theta}) & \cosh(\frac{\theta^2y^1-y^2}{\theta}) & 0 & 0 \\
 0 & 0 & \cosh(\frac{\theta^2y^1-y^2}{\theta}) & \sinh(\frac{\theta^2y^1-y^2}{\theta}) \\
 0 & 0 & \sinh(\frac{\theta^2y^1-y^2}{\theta}) & \cosh(\frac{\theta^2y^1-y^2}{\theta}) \\
 \end{pmatrix},\\
&
B^+_\theta(x^2,y^1)\\
&
 =\begin{pmatrix} 
 \cosh(\theta(x^2-y^1)) & -\sinh(\theta(x^2-y^1)) & 0 & 0 \\
 -\sinh(\theta(x^2-y^1)) & \cosh(\theta(x^2-y^1)) & 0 & 0 \\
 0 & 0 & \cosh(\theta(x^2-y^1)) & \sinh(\theta(x^2-y^1)) \\
 0 & 0 & \sinh(\theta(x^2-y^1)) & \cosh(\theta(x^2-y^1)) \\
 \end{pmatrix},\\
&
B^-_\theta(x^1,y^2)
 =\begin{pmatrix} 
 \cosh(\frac{x^1-y^2}{\theta}) & -\sinh(\frac{x^1-y^2}{\theta}) & 0 & 0 \\
 -\sinh(\frac{x^1-y^2}{\theta}) & \cosh(\frac{x^1-y^2}{\theta}) & 0 & 0 \\
 0 & 0 & \cosh(\frac{x^1-y^2}{\theta}) & \sinh(\frac{x^1-y^2}{\theta}) \\
 0 & 0 & \sinh(\frac{x^1-y^2}{\theta}) & \cosh(\frac{x^1-y^2}{\theta}) \\
 \end{pmatrix},\\ 
&
C_\theta(x^1,x^2,y^1,y^2)=
 \begin{pmatrix} 
 \cosh(\frac{x^1-\theta^2y^1}{\theta}) & \sinh(\frac{x^1-\theta^2y^1}{\theta}) & 0 & 0 \\
 \sinh(\frac{x^1-\theta^2y^1}{\theta}) & \cosh(\frac{x^1-\theta^2y^1}{\theta}) & 0 & 0 \\
 0 & 0 & \cosh(\frac{x^1-\theta^2y^1}{\theta}) & -\sinh(\frac{x^1-\theta^2y^1}{\theta}) \\
 0 & 0 & -\sinh(\frac{x^1-\theta^2y^1}{\theta}) & \cosh(\frac{x^1-\theta^2y^1}{\theta}) \\
 \end{pmatrix}.
\end{align*}
}   Substitute $\lambda$, $z^i$ and $\bar{z}^i$ for $\theta$, $x^i$ and $y^i$ 
($i=1,2$), respectively: 
\[
  C_\lambda(z^1,z^2,\bar{z}^1,\bar{z}^2)
=\begin{pmatrix}
 \cosh(\frac{z^1-\lambda^2\bar{z}^1}{\lambda}) & \sinh(\frac{z^1-\lambda^2\bar{z}^1}{\lambda}) & 0 & 0 \\
 \sinh(\frac{z^1-\lambda^2\bar{z}^1}{\lambda}) & \cosh(\frac{z^1-\lambda^2\bar{z}^1}{\lambda}) & 0 & 0 \\
 0 & 0 & \cosh(\frac{z^1-\lambda^2\bar{z}^1}{\lambda}) & -\sinh(\frac{z^1-\lambda^2\bar{z}^1}{\lambda}) \\
 0 & 0 & -\sinh(\frac{z^1-\lambda^2\bar{z}^1}{\lambda}) & \cosh(\frac{z^1-\lambda^2\bar{z}^1}{\lambda}) \\
 \end{pmatrix}
\] 
for $C_\theta(x^1,x^2,y^1,y^2)$. 
   Since $(z^1-\lambda^2\bar{z}^1)/\lambda$ is a purely imaginary number, one 
sees that $C_\lambda(z^1,z^2,\bar{z}^1,\bar{z}^2)\in G_1=Sp(2)$ for all 
$(z^1,z^2,\bar{z}^1,\bar{z}^2;\lambda)\in\mathbb{A}^4\times S^1$. 
   Accordingly, we obtain a pluriharmonic map $f_1$ and a para-pluriharmonic 
map $f_2$, 
\[
\begin{array}{ll} 
 (f_1)_\lambda=\pi_1\circ C_\lambda(z^1,z^2,\bar{z}^1,\bar{z}^2):
 (\mathbb{A}^4,J)\longrightarrow G_1/H_1\simeq S^4,  
& \lambda\in S^1,\\
 (f_2)_\theta=\pi_2\circ C_\theta(x^1,x^2,y^1,y^2):
 (\mathbb{B}^4,I)\longrightarrow G_2/H_2\simeq H^4, 
& \theta\in\mathbb{R}^+
\end{array}
\]
(ref.\ Subsection \ref{subsec-5.1.1}). 

\begin{center}
\fbox{
\begin{minipage}{130mm}
$(f_1)_\lambda=\pi_1\circ C_\lambda(z^1,z^2,\bar{z}^1,\bar{z}^2):
 (\mathbb{A}^4,J)\to S^4$ 
is pluriharmonic\par
\centerline{$\Updownarrow$} 
$(f_2)_\theta=\pi_2\circ C_\theta(x^1,x^2,y^1,y^2):(\mathbb{B}^4,I)\to H^4$ 
  is para-pluriharmonic 
\end{minipage}
}
\end{center}

\subsection{Harmonic maps, Lorentz harmonic maps and CMC-surfaces}
\label{subsec-5.2} 
   In this subsection we will interrelate some harmonic maps 
$f_1(z,\bar{z}):\mathbb{A}^2\to G_1/H_1$ with Lorentz harmonic maps 
$f_2(x,y):\mathbb{B}^2\to G_2/H_2$ by means of Theorem \ref{thm-4.3.1}; and in 
addition, we will interrelate CMC-surfaces with other CMC-surfaces in 
$\mathbb{R}^3$ or $\mathbb{R}^3_1$, by use of $f_1(z,\bar{z})$ and 
$f_2(x,y)$. 
   More precisely, we interrelate a cylinder in $\mathbb{R}^3$ with a 
hyperbolic cylinder in $\mathbb{R}^3_1$ (cf.\ Subsection \ref{subsec-5.2.1}), 
a two sheeted hyperboloid in $\mathbb{R}^3_1$ with a one sheeted hyperboloid 
in $\mathbb{R}^3_1$ (cf.\ Subsection \ref{subsec-5.2.2}),  a sphere in 
$\mathbb{R}^3$ with a one sheeted hyperboloid in $\mathbb{R}^3_1$ (cf.\ 
Subsection \ref{subsec-5.2.3}), a Smyth surface in $\mathbb{R}^3$ with a 
timelike Smyth surface in $\mathbb{R}^3_1$ (cf.\ Subsection 
\ref{subsec-5.2.4}), and a Delaunay surface in $\mathbb{R}^3$ with a 
$K$-surface of revolution in $\mathbb{R}^3$ (cf.\ Subsection 
\ref{subsec-5.2.5}).

\subsubsection{Cylinder in $\mathbb{R}^3$ $\Leftrightarrow$ Hyperbolic cylinder 
in $\mathbb{R}^3_1$}\label{subsec-5.2.1}  
   In this subsection we will use the following notation: 
\begin{enumerate}[(5.2.1)]
\item 
  $G^\mathbb{C}=SL(2,\mathbb{C})$, 
\item 
  $\sigma(A):=I_{1,1}\cdot A\cdot I_{1,1}$ for $A\in G^\mathbb{C}$, where 
$I_{1,1}:=\operatorname{diag}(-1,1)$, 
\item 
  $\nu_1(A):={}^t(\overline{A})^{-1}$ for $A\in G^\mathbb{C}$, 
\item 
  $\nu_2(A):=\overline{A}$ for $A\in G^\mathbb{C}$,
\item 
  $G^\mathbb{C}/H^\mathbb{C}
=SL(2,\mathbb{C})/S(GL(1,\mathbb{C})\times GL(1,\mathbb{C}))$,
\item 
  $G_1/H_1=SU(2)/S(U(1)\times U(1))\simeq S^2$, 
\item 
  $G_2/H_2=SL(2,\mathbb{R})/S(GL(1,\mathbb{R})\times GL(1,\mathbb{R}))
      \simeq S^2_1$, 
\item 
  $\pi_i$: the projection from $G_i$ onto $G_i/H_i$ ($i=1,2$), 
\item 
  $\frak{g}_2:=\operatorname{Lie}G_2=\frak{sl}(2,\mathbb{R})$. 
\end{enumerate}
\setcounter{equation}{9} 

   We will construct a harmonic map $f_1:(\mathbb{A}^2,J)\to S^2$ and a Lorentz 
harmonic map $f_2:(\mathbb{B}^2,I)\to S^2_1$ by means of Theorem \ref{thm-4.3.1}; 
and moreover, a cylinder in $\mathbb{R}^3$ and a hyperbolic cylinder in 
$\mathbb{R}^3_1$ from $f_1$ and $f_2$, respectively.  

   In the first place, we define a 
$\widetilde{\Lambda}_{-1,\infty}(\frak{g}_2)_\sigma$-valued, real analytic 
para-holomorphic $1$-form $\eta_\theta(x)$ on $(\mathbb{B}^2,I)$ by 
\begin{equation}\label{eq-5.2.1}
  \eta_\theta(x):=\theta^{-1}
    \begin{pmatrix} 
    0 & 1 \\
    1 & 0
    \end{pmatrix}dx. 
\end{equation}
   In the second place, we define a 
$\widetilde{\Lambda}_{-\infty,1}(\frak{g}_2)_\sigma$-valued 
para-antiholomorphic $1$-form $\tau_\theta(y)$ on $(\mathbb{B}^2,I)$ by taking 
the morphing condition \eqref{M} in Theorem \ref{thm-4.3.1} into 
consideration, i.e., 
\[
  \tau_\theta(y):=\theta
    \begin{pmatrix} 
     0 & -1 \\
    -1 & 0
    \end{pmatrix}dy.  
\] 
   In the third place, let us solve the two initial value problems: 
$A_\theta^{-1}\cdot dA_\theta=\eta_\theta(x)$, 
$B_\theta^{-1}\cdot dB_\theta=\tau_\theta(y)$ and 
$A_\theta(0)\equiv\operatorname{id}\equiv B_\theta(0)$.  
   In this case, one can obtain  
\[
\begin{array}{ll}
   A_\theta(x)
=\begin{pmatrix}
  \cosh(\theta^{-1}x) & \sinh(\theta^{-1}x) \\
  \sinh(\theta^{-1}x) & \cosh(\theta^{-1}x)  
 \end{pmatrix},  
& 
   B_\theta(y)
=\begin{pmatrix}
  \cosh(-\theta y) & \sinh(-\theta y) \\
  \sinh(-\theta y) & \cosh(-\theta y)  
 \end{pmatrix}
\end{array}     
\] 
and the Iwasawa decomposition:  
$(A_\theta(x),B_\theta(y))=(C_\theta(x,y),C_\theta(x,y))
 \cdot(B^+_\theta(x,y),B^-_\theta(x,y))$, 
where $B^+_\theta(x,y):=B_\theta(y)^{-1}\in\widetilde{\Lambda}^+_*(G_2)_\sigma$ 
and $B^-_\theta(x,y):=A_\theta(x)^{-1}\in\widetilde{\Lambda}^-_*(G_2)_\sigma$. 
   Here $C_\theta(x,y)$ is given as follows: 
\begin{equation}\label{eq-5.2.2}
  C_\theta(x,y)
=\begin{pmatrix}
  \cosh(\theta^{-1}x-\theta y) & \sinh(\theta^{-1}x-\theta y) \\
  \sinh(\theta^{-1}x-\theta y) & \cosh(\theta^{-1}x-\theta y)  
 \end{pmatrix}. 
\end{equation}
   This $C_\theta(x,y)$ provides us with an $\mathbb{R}^+$-family of Lorentz 
harmonic maps 
\[
\begin{array}{ll}
 (f_2)_\theta=\pi_2\circ C_\theta(x,y):(\mathbb{B}^2,I)
  \longrightarrow G_2/H_2\simeq S^2_1, 
& \theta\in\mathbb{R}^+ 
\end{array}  
\] 
(cf.\ Proposition \ref{prop-3.2.3}).   
   In the fourth place, we substitute $\lambda$, $z$ and $\bar{z}$ for $\theta$, 
$x$ and $y$, respectively: 
\begin{equation}\label{eq-5.2.3}
  C_\lambda(z,\bar{z})
=\begin{pmatrix}
  \cosh(\lambda^{-1}z-\lambda\bar{z}) & \sinh(\lambda^{-1}z-\lambda\bar{z}) \\
  \sinh(\lambda^{-1}z-\lambda\bar{z}) & \cosh(\lambda^{-1}z-\lambda\bar{z})  
 \end{pmatrix}
\end{equation}
for $C_\theta(x,y)$. 
   Remark that $C_\lambda(z,\bar{z})\in G_1=SU(2)$ for all 
$(z,\bar{z};\lambda)\in \mathbb{A}^2\times S^1$ because 
$(\lambda^{-1}z-\lambda\bar{z})$ is a purely imaginary number. 
   As a consequence, one can construct a harmonic map $f_1$ and a Lorentz 
harmonic map $f_2$, 
\[
\begin{array}{l@{\,}ll} 
 (f_1)_\lambda=\pi_1\circ C_\lambda(z,\bar{z})&:(\mathbb{A}^2,J)
  \longrightarrow G_1/H_1\simeq S^2,  
& \lambda\in S^1,\\
 (f_2)_\theta=\pi_2\circ C_\theta(x,y)&:(\mathbb{B}^2,I)
  \longrightarrow G_2/H_2\simeq S^2_1, 
& \theta\in\mathbb{R}^+,\\ 
\end{array}
\]
from the potential \eqref{eq-5.2.1} $\eta_\theta(x)$. 

\begin{center}
\fbox{
\begin{minipage}{100mm}
$(f_1)_\lambda=\pi_1\circ C_\lambda(z,\bar{z}):(\mathbb{A}^2,J)\to S^2$ 
is harmonic\par
\centerline{$\Updownarrow$} 
$(f_2)_\theta=\pi_2\circ C_\theta(x,y):(\mathbb{B}^2,I)\to S^2_1$ 
is Lorentz harmonic 
\end{minipage}
}
\end{center}
   Here $C_\lambda(z,\bar{z})$ and $C_\theta(x,y)$ are given by 
\eqref{eq-5.2.3} and \eqref{eq-5.2.2}, respectively.\par 

   Note that we have constructed the extended framing 
$C_\lambda(z,\bar{z}):\mathbb{A}^2\to S^2$ of a harmonic map and the extended 
framing $C_\theta(x,y):\mathbb{B}^2\to S^2_1$ of a Lorentz harmonic map.  
   For this reason, the Sym-Bobenko formula will enable us to obtain a 
CMC-surface $\phi_1(z,\bar{z}):\mathbb{A}^2\to\mathbb{R}^3$ and a timelike 
CMC-surface $\phi_2(x,y):\mathbb{B}^2\to\mathbb{R}^3_1$ from them. 

   For $C_\lambda(z,\bar{z})$, the Sym-Bobenko formula in 
\cite[p.\ 30]{Fu-Ko-Ro} yields 
\[
\begin{split}
  \phi_1(z,\bar{z}):
&=-
  \left\{i\cdot\lambda\cdot\frac{\partial C_\lambda}{\partial\lambda}
     \cdot C_\lambda^{-1} 
      +\frac{1}{2}\cdot\operatorname{Ad}(C_\lambda)\cdot
     \begin{pmatrix}
      i & 0 \\
      0 & -i \\
     \end{pmatrix}\right\}\bigg|_{\lambda=1}\\
&=\frac{-i}{2}
   \begin{pmatrix} 
    \cosh 2(z-\bar{z}) & -2(z+\bar{z})-\sinh 2(z-\bar{z})\\
    -2(z+\bar{z})+\sinh 2(z-\bar{z}) & -\cosh 2(z-\bar{z})
   \end{pmatrix}\\
&\simeq (-2(z+\bar{z}),i\cdot\sinh 2(z-\bar{z}),-\cosh 2(z-\bar{z})). 
\end{split}
\] 
   This CMC-surface $\phi_1(z,\bar{z}):\mathbb{A}^2\to\mathbb{R}^3$ is a 
cylinder.  
   For $C_\theta(x,y)$, the Sym-Bobenko formula in \cite{Do-In-To}\footnote{
We must change $(\partial \Phi/\partial t)$ into $-(\partial \Phi/\partial t)$ 
in the Sym-Bobenko formula \cite[Proposition 5.1]{Do-In-To} because the 
parameter $\lambda$ in this paper corresponds to the parameter $\lambda^{-1}$ 
in \cite{Do-In-To}.} is given as follows:    
\[
\begin{split}
  \phi_2(x,y):
&=-2
  \left\{-\theta\cdot\frac{\partial C_\theta}{\partial\theta}
     \cdot C_\theta^{-1} 
      +\frac{1}{2}\cdot\operatorname{Ad}(C_\theta)\cdot
     \begin{pmatrix}
     -1 & 0 \\
      0 & 1 \\
     \end{pmatrix}\right\}\bigg|_{\theta=1}\\
&=
   \begin{pmatrix} 
    \cosh 2(x-y) & -2(x+y)-\sinh 2(x-y)\\
    -2(x+y)+\sinh 2(x-y) & -\cosh 2(x-y)
   \end{pmatrix}\\
&\simeq(\sinh 2(x-y),-2(x+y),-\cosh 2(x-y)). 
\end{split}
\]  
   This timelike CMC-surface $\phi_2(x,y):\mathbb{B}^2\to\mathbb{R}^3_1$ is a 
hyperbolic cylinder, because 
$-(\sinh 2(x-y))^2+(-2(x+y))^2+(-\cosh 2(x-y))^2=4(x+y)^2+1$ (see Section 1.1 
in \cite{Do-In-To} for the metric on $\mathbb{R}^3_1$).  

\begin{center}
\fbox{
\begin{minipage}{95mm}
CMC-surface in $\mathbb{R}^3$: Cylinder\\
$\phi_1(z,\bar{z})=
   (-2(z+\bar{z}),i\cdot\sinh 2(z-\bar{z}),-\cosh 2(z-\bar{z}))$\par
\centerline{$\Updownarrow$} 
Timelike CMC-surface in $\mathbb{R}^3_1$: Hyperbolic cylinder\\ 
  $\phi_2(x,y)=(\sinh 2(x-y),-2(x+y),-\cosh 2(x-y))$ 
\end{minipage}
}
\end{center}

\subsubsection{Two sheeted hyperboloid in $\mathbb{R}^3_1$ $\Leftrightarrow$ 
One sheeted hyperboloid in $\mathbb{R}^3_1$}\label{subsec-5.2.2}  
   In this subsection we will use the following notation: 
\begin{enumerate}
\item[(5.2.13)] 
  $G^\mathbb{C}$: the same notation (5.2.1) as in Subsection \ref{subsec-5.2.1}, 
\item[(5.2.14)]  
  $\sigma$: the same notation (5.2.2) as in Subsection \ref{subsec-5.2.1}, 
\item[(5.2.15)]  
  $\nu_1(A):=I_{1,1}\cdot{}^t(\overline{A})^{-1}\cdot I_{1,1}$ for 
$A\in G^\mathbb{C}$, 
\item[(5.2.16)]  
  $\nu_2(A):=I_{1,1}\cdot\overline{A}\cdot I_{1,1}$ for $A\in G^\mathbb{C}$,
\item[(5.2.17)]  
  $G^\mathbb{C}/H^\mathbb{C}$: the same notation (5.2.5) as in Subsection 
\ref{subsec-5.2.1}, 
\item[(5.2.18)]  
  $G_1/H_1=SU(1,1)/S(U(1)\times U(1))\simeq H^2$, 
\item[(5.2.19)]  
  $G_2/H_2=SL_*(2,\mathbb{R})/S(GL(1,\mathbb{R})\times GL(1,\mathbb{R}))
\simeq S^2_1$, 
\item[(5.2.20)]  
  $\pi_i$: the projection from $G_i$ onto $G_i/H_i$ ($i=1,2$), 
\item[(5.2.21)]  
  $\frak{g}_2:=\operatorname{Lie}G_2=\frak{sl}_*(2,\mathbb{R})$.            
\end{enumerate}
where the above notation $SL_*(2,\mathbb{R})$ and $\frak{sl}_*(2,\mathbb{R})$ 
are the same as those in \cite{Ko}. 
\setcounter{equation}{21}

     The arguments below are similar to those in Subsection \ref{subsec-5.2.1}. 
   Define a $\widetilde{\Lambda}_{-1,\infty}(\frak{g}_2)_\sigma$-valued 
analytic para-holomorphic $1$-form $\eta_\theta(x)$ on $(\mathbb{B}^2,I)$ by   
\begin{equation}\label{eq-5.2.4}
\begin{array}{ll}
  \eta_\theta(x):=\theta^{-1}
    \begin{pmatrix} 
    0 & i \\
    0 & 0
    \end{pmatrix}dx. 
\end{array} 
\end{equation} 
   We want $\tau_\theta(y)\in\widetilde{\mathcal{P}}^-(\frak{g}_2)$ to satisfy 
the morphing condition \eqref{M} in Theorem \ref{thm-4.3.1}; and therefore we 
define $\tau_\theta(y)$ as follows:  
\[ 
  \tau_\theta(y):=\theta
    \begin{pmatrix} 
     0 & 0 \\
    -i & 0
    \end{pmatrix}dy.
\] 
   Solve the two initial value problems: 
$A_\theta^{-1}\cdot dA_\theta=\eta_\theta(x)$, 
$B_\theta^{-1}\cdot dB_\theta=\tau_\theta(y)$ and 
$A_\theta(0)\equiv\operatorname{id}\equiv B_\theta(0)$. 
   Then one has 
\[
\begin{array}{ll}
   A_\theta(x)
=\begin{pmatrix}
  1 & i\theta^{-1}x \\
  0 & 1  
 \end{pmatrix},  
& 
   B_\theta(y)
=\begin{pmatrix}
  1 & 0 \\
  -i\theta y & 1  
 \end{pmatrix}.
\end{array}     
\] 
   Let us factorize 
$(A_\theta,B_\theta)\in\widetilde{\Lambda}(G_2)_\sigma\times
 \widetilde{\Lambda}(G_2)_\sigma$ 
in the Iwasawa decomposition around $(0,0)$:  
$(A_\theta,B_\theta)=(C_\theta,C_\theta)\cdot(B^+_\theta,B^-_\theta)$, 
$C_\theta\in\widetilde{\Lambda}(G_2)_\sigma$ and 
$B^\pm_\theta\in\widetilde{\Lambda}^\pm(G_2)_\sigma$ (cf.\ Theorem 
\ref{thm-3.1.5}). 
   Here  $B^\pm_\theta$ and $C_\theta$ are given as follows: 
\[
\begin{array}{ll}
   B^+_\theta(x,y)
={\displaystyle \frac{1}{\sqrt{1-xy}}}
 \begin{pmatrix}
  1 & 0 \\
  i\theta y & 1-xy  
 \end{pmatrix}, 
& 
   B^-_\theta(x,y)
={\displaystyle \frac{1}{\sqrt{1-xy}}}
 \begin{pmatrix}
  1-xy & -i\theta^{-1} x \\
  0 & 1  
 \end{pmatrix}, 
\end{array}     
\]   
\begin{equation}\label{eq-5.2.5}
   C_\theta(x,y)
={\displaystyle \frac{1}{\sqrt{1-xy}}}
 \begin{pmatrix}
  1 & i\theta^{-1}x \\
  -i\theta y & 1  
 \end{pmatrix}.  
\end{equation}
  From $C_\theta(x,y)$ one obtains an $\mathbb{R}^+$-family of Lorentz 
harmonic maps 
\[
 (f_2)_\theta=\pi_2\circ C_\theta(x,y):(W,I)
  \longrightarrow G_2/H_2\simeq S^2_1, 
\]  
where $W:=\{(x,y)\in\mathbb{B}^2\,|\, xy\neq 1\}$.  
   Substituting $\lambda$, $z$ and $\bar{z}$ for $\theta$, $x$ and $y$, 
respectively, we have 
\begin{equation}\label{eq-5.2.6}
   C_\lambda(z,\bar{z})
={\displaystyle \frac{1}{\sqrt{1-|z|^2}}}
 \begin{pmatrix}
  1 & i\lambda^{-1} z \\
  -i\lambda\bar{z} & 1  
 \end{pmatrix} 
\end{equation}
for $C_\theta(x,y)$. 
   It is obvious that $C_\lambda(z,\bar{z})\in G_1=SU(1,1)$ for all 
$(z,\bar{z};\lambda)\in V\times S^1$, where $V:=\mathbb{A}^2\setminus S^1$. 
   Consequently, one can get a harmonic map $f_1(z,\bar{z})$ and a Lorentz 
harmonic map $f_2(x,y)$, 
\[
\begin{array}{ll} 
 (f_1)_\lambda=\pi_1\circ C_\lambda(z,\bar{z}):(V,J)
  \longrightarrow G_1/H_1\simeq H^2,  
& \lambda\in S^1,\\
 (f_2)_\theta=\pi_2\circ C_\theta(x,y):(W,I)
  \longrightarrow G_2/H_2\simeq S^2_1, 
& \theta\in\mathbb{R}^+,
\end{array}
\]
from the potential \eqref{eq-5.2.4}. 

\begin{center}
\fbox{
\begin{minipage}{100mm}
$(f_1)_\lambda=\pi_1\circ C_\lambda(z,\bar{z}):(V,J)\to H^2$ is harmonic\par
\centerline{$\Updownarrow$} 
$(f_2)_\theta=\pi_2\circ C_\theta(x,y):(W,I)\to S^2_1$ is Lorentz harmonic 
\end{minipage}
}
\end{center}
   Here $C_\lambda(z,\bar{z})$ and $C_\theta(x,y)$ are given by 
\eqref{eq-5.2.6} and \eqref{eq-5.2.5}, respectively; and 
$V=\mathbb{A}^2\setminus S^1$ and $W=\{(x,y)\in\mathbb{B}^2\,|\, xy\neq 1\}$.\par 

   Now, let us obtain a spacelike CMC-surface 
$\phi_1(z,\bar{z}):V\to\mathbb{R}^3_1$ and 
a timelike CMC-surface $\phi_2(x,y):W\to\mathbb{R}^3_1$ from the above 
$f_1(z,\bar{z})$ and $f_2(x,y)$, respectively.\par
   
   On the one hand, the Sym-Bobenko formula in \cite{Br-Ro-Sc}, together with 
\eqref{eq-5.2.6}, gives us 
\[
\begin{split}
  \phi_1(z,\bar{z}):
&=-
  \left\{i\cdot\lambda\cdot\frac{\partial C_\lambda}{\partial\lambda}
     \cdot C_\lambda^{-1} 
      +\frac{1}{2}\cdot\operatorname{Ad}(C_\lambda)\cdot
     \begin{pmatrix}
      i & 0 \\
      0 & -i \\
     \end{pmatrix}\right\}\bigg|_{\lambda=1}\\
&=
   \begin{pmatrix} 
    -i(1+3|z|^2)/2(1-|z|^2) & -2z/(1-|z|^2)\\
    -2\bar{z}/(1-|z|^2) & i(1+3|z|^2)/2(1-|z|^2)
   \end{pmatrix}. 
\end{split}
\]
   Thus we have a spacelike CMC-surface in $\mathbb{R}^3_1$,   
\[
\begin{array}{ll}
 \phi_1:V\longrightarrow\mathbb{R}^3_1, 
& {\displaystyle  
(z,\bar{z})\mapsto 
   \Bigl(-\frac{z+\bar{z}}{1-|z|^2},
          \frac{i(z-\bar{z})}{1-|z|^2},
         -\frac{1+3|z|^2}{2(1-|z|^2)}\Bigr)}
\end{array}
\]
(cf.\ Subsection 3.2.1 in \cite{Br-Ro-Sc}). 
   This $\phi_1(z,\bar{z})$ is a two sheeted hyperboloid centered at 
$(0,0,1/2)$ because 
\[
  \Bigl(-\frac{z+\bar{z}}{1-|z|^2}\Bigr)^2
 +\Bigl(\frac{i(z-\bar{z})}{1-|z|^2}\Bigr)^2 
 -\Bigl(-\frac{1+3|z|^2}{2(1-|z|^2)}-\frac{1}{2}\Bigr)^2
 =-1
\]
(see Subsection 3.2.1 in \cite{Br-Ro-Sc} for the metric on $\mathbb{R}^3_1$).
   One the other hand, the Sym-Bobenko formula in \cite{Ko}, combined with 
\eqref{eq-5.2.5}, gives us 
\[
\begin{split}
  \phi_2(x,y)
&:=-{\displaystyle \frac{1}{2}}
  \left\{\theta\cdot\frac{\partial C_\theta}{\partial\theta}
    \cdot C_\theta^{-1} 
      +\frac{1}{2}\cdot\operatorname{Ad}(C_\theta)\cdot
     \begin{pmatrix}
      1 & 0 \\
      0 & -1 \\
     \end{pmatrix}\right\}\bigg|_{\theta=1}\\
&=
   \begin{pmatrix} 
    -(1+3xy)/4(1-xy) & ix/(1-xy)\\
    iy/(1-xy) & (1+3xy)/4(1-xy)
   \end{pmatrix} 
\end{split}
\]
(ref.\ Proof of Corollary 3.4 in \cite{Ko}).  
   Then it turns out that 
\[
\begin{array}{ll}
 \phi_2:W\longrightarrow\mathbb{R}^3_1, 
& {\displaystyle 
 (x,y)\mapsto 
   \Bigl(-\frac{x+y}{1-xy},-\frac{x-y}{1-xy},-\frac{1+3xy}{2(1-xy)}\Bigr)}
\end{array}
\]
(cf.\ Subsection 3.1 in \cite{Ko}).  
   This $\phi_2(x,y)$ is a one sheeted hyperboloid centered at $(0,0,1/2)$. 
   Indeed, we deduce
\[
\Bigl(-\frac{x+y}{1-xy}\Bigr)^2
 -\Bigl(-\frac{x-y}{1-xy}\Bigr)^2
  -\Bigl(-\frac{1+3xy}{2(1-xy)}-\frac{1}{2}\Bigr)^2
 =-1
\]
by a direct computation (see Remark 3.2 in \cite{Ko} for the metric on 
$\mathbb{R}^3_1$). 

\begin{center}
\fbox{
\begin{minipage}{105mm}
Spacelike CMC-surface in $\mathbb{R}^3_1$: Two sheeted hyperboloid\\
$\phi_1(z,\bar{z})=\bigl(-\frac{z+\bar{z}}{1-|z|^2},
 \frac{i(z-\bar{z})}{1-|z|^2},-\frac{1+3|z|^2}{2(1-|z|^2)}\bigr)$\par
\centerline{$\Updownarrow$} 
Timelike CMC-surface in $\mathbb{R}^3_1$: One sheeted hyperboloid \\ 
$\phi_2(x,y)
 =\bigl(-\frac{x+y}{1-xy},-\frac{x-y}{1-xy},-\frac{1+3xy}{2(1-xy)}\bigr)$
\end{minipage}
}
\end{center}

\subsubsection{Sphere in $\mathbb{R}^3$ $\Leftrightarrow$ One sheeted 
hyperboloid in $\mathbb{R}^3_1$}\label{subsec-5.2.3}  
   In this subsection, we utilize the same potential as in Subsection  
\ref{subsec-5.2.2}, but we will obtain other CMC-surfaces. 
   For this we will use the following notation:  
\begin{enumerate}
\item[(5.2.25)] 
   $G^\mathbb{C}$: the same notation (5.2.1) as in Subsection \ref{subsec-5.2.1}, 
\item[(5.2.26)] 
   $\sigma$: the same notation (5.2.2) as in Subsection \ref{subsec-5.2.1}, 
\item[(5.2.27)] 
   $\nu_1$: the same notation (5.2.3) as in Subsection \ref{subsec-5.2.1}, 
\item[(5.2.28)] 
   $\nu_2$: the same notation (5.2.16) as in Subsection \ref{subsec-5.2.2},   
\item[(5.2.29)] 
   $G^\mathbb{C}/H^\mathbb{C}$: the same notation (5.2.5) as in Subsection 
\ref{subsec-5.2.1}, 
\item[(5.2.30)]
   $G_1/H_1$: the same notation (5.2.6) as in Subsection \ref{subsec-5.2.1}, 
\item[(5.2.31)] 
   $G_2/H_2$: the same notation (5.2.19) as in Subsection \ref{subsec-5.2.2}, 
\item[(5.2.32)] 
   $\pi_i$: the projection from $G_i$ onto $G_i/H_i$ ($i=1,2$), 
\item[(5.2.33)] 
   $\frak{g}_2$: the same notation (5.2.21) as in Subsection \ref{subsec-5.2.2}.           
\end{enumerate}
\setcounter{equation}{33}

   Let $\eta_\theta(x)$ denote the potential \eqref{eq-5.2.4}. 
   Define $\tau_\theta(y)\in\widetilde{\mathcal{P}}^-(\frak{g}_2)$ by    
\[
  \tau_\theta(y):=\theta
    \begin{pmatrix} 
     0 & 0 \\
     i & 0
    \end{pmatrix}dy.  
\]
   Here we remark that $(\eta_\theta(x),\tau_\theta(y))$ is a real analytic 
para-pluriharmonic potential on $(\mathbb{B}^2,I)$ satisfying the morphing 
condition \eqref{M}.   
   Solve the two initial value problems: 
$A_\theta^{-1}\cdot dA_\theta=\eta_\theta(x)$, 
$B_\theta^{-1}\cdot dB_\theta=\tau_\theta(y)$ and 
$A_\theta(0)\equiv\operatorname{id}\equiv B_\theta(0)$; and factorize 
$(A_\theta,B_\theta)\in\widetilde{\Lambda}(G_2)_\sigma\times
 \widetilde{\Lambda}(G_2)_\sigma$ 
in the Iwasawa decomposition (cf.\ Theorem \ref{thm-3.1.5}):  
$(A_\theta,B_\theta)=(C_\theta,C_\theta)\cdot(B^+_\theta,B^-_\theta)$, 
$C_\theta\in\widetilde{\Lambda}(G_2)_\sigma$ and 
$B^\pm_\theta\in\widetilde{\Lambda}^\pm(G_2)_\sigma$. 
   In this case it follows that 
\[
\begin{array}{ll}
   A_\theta(x)
=\begin{pmatrix}
  1 & i\theta^{-1}x \\
  0 & 1  
 \end{pmatrix},  
& 
   B_\theta(y)
=\begin{pmatrix}
  1 & 0 \\
  i\theta y & 1  
 \end{pmatrix};\\
   B^+_\theta(x,y)
={\displaystyle \frac{1}{\sqrt{1+xy}}}
 \begin{pmatrix}
  1 & 0 \\
  -i\theta y & 1+xy  
 \end{pmatrix}, 
& 
   B^-_\theta(x,y)
={\displaystyle \frac{1}{\sqrt{1+xy}}}
 \begin{pmatrix}
  1+xy & -i\theta^{-1} x \\
  0 & 1  
 \end{pmatrix};\\
   C_\theta(x,y)
={\displaystyle \frac{1}{\sqrt{1+xy}}}
 \begin{pmatrix}
  1 & i\theta^{-1}x \\
  i\theta y & 1  
 \end{pmatrix}. 
& 
\end{array} 
\]
   It is easy to see that $C_\lambda(z,\bar{z})\in G_1=SU(2)$ for all 
$(z,\bar{z};\lambda)\in \mathbb{A}^2\times S^1$. 
   Accordingly, we obtain a harmonic map $f_1$ and a Lorentz harmonic map 
$f_2$, 
\[
\begin{array}{ll} 
 (f_1)_\lambda=\pi_1\circ C_\lambda(z,\bar{z}):(\mathbb{A}^2,J)
  \longrightarrow G_1/H_1\simeq S^2,  
& \lambda\in S^1,\\
 (f_2)_\theta=\pi_2\circ C_\theta(x,y):(W,I)
  \longrightarrow G_2/H_2\simeq S^2_1, 
& \theta\in\mathbb{R}^+,\\
\end{array}
\]
from \eqref{eq-5.2.4}. 
   Here $W:=\{(x,y)\in\mathbb{B}^2\,|\,xy\neq -1\}$. 
   The above maps will provide us with a CMC-surface 
$\phi_1:\mathbb{A}^2\to\mathbb{R}^3$ and a timelike CMC-surface 
$\phi_2:W\to\mathbb{R}^3_1$. 
   The Sym-Bobenko formula in \cite{Fu-Ko-Ro}, combined with 
$C_\lambda(z,\bar{z})$, gives 
\[
\begin{split}
  \phi_1(z,\bar{z}):
&=-
  \left\{i\cdot\lambda\cdot\frac{\partial C_\lambda}{\partial\lambda}
     \cdot C_\lambda^{-1} 
      +\frac{1}{2}\cdot\operatorname{Ad}(C_\lambda)\cdot
     \begin{pmatrix}
      i & 0 \\
      0 & -i \\
     \end{pmatrix}\right\}\bigg|_{\lambda=1}\\
&={\displaystyle \frac{-i}{2}}
   \begin{pmatrix} 
    (1-3|z|^2)/(1+|z|^2) & -4iz/(1+|z|^2)\\
    4i\bar{z}/(1+|z|^2) & -(1-3|z|^2)/(1+|z|^2)
   \end{pmatrix}\\
&\simeq 
 \Bigl(\frac{-2i(z-\bar{z})}{1+|z|^2},\frac{-2(z+\bar{z})}{1+|z|^2},
           \frac{-1+3|z|^2}{1+|z|^2}\Bigr). 
\end{split}
\]
   This CMC-surface $\phi_1(z,\bar{z}):\mathbb{A}^2\to\mathbb{R}^3$ is a 
sphere centered at $(0,0,1)$. 
   By the above $C_\theta(x,y)$ and the Sym-Bobenko formula in \cite{Ko}, we 
obtain 
\[
\begin{split}
  \phi_2(x,y):
&=-{\displaystyle \frac{1}{2}}
  \left\{\theta\cdot\frac{\partial C_\theta}{\partial\theta}
     \cdot C_\theta^{-1} 
      +\frac{1}{2}\cdot\operatorname{Ad}(C_\theta)\cdot
     \begin{pmatrix}
      1 & 0 \\
      0 & -1 \\
     \end{pmatrix}\right\}\bigg|_{\theta=1}\\
&=
   \begin{pmatrix} 
    -(1-3xy)/4(1+xy) & ix/(1+xy)\\
    -iy/(1+xy) & (1-3xy)/4(1+xy)
   \end{pmatrix}\\
&\simeq \Bigl(-\frac{x-y}{1+xy},-\frac{x+y}{1+xy},-\frac{1-3xy}{2(1+xy)}\Bigr).  
\end{split}
\]
   This timelike CMC-surface $\phi_2(x,y):W\to\mathbb{R}^3_1$ is a one sheeted 
hyperboloid centered at $(0,0,1/2)$ because  
\[
\Bigl(-\frac{x-y}{1+xy}\Bigr)^2
 -\Bigl(-\frac{x+y}{1+xy}\Bigr)^2
  -\Bigl(-\frac{1-3xy}{2(1+xy)}-\frac{1}{2}\Bigr)^2
 =-1
\]
(see Remark 3.2 in \cite{Ko} for the metric on $\mathbb{R}^3_1$).

\begin{center}
\fbox{
\begin{minipage}{100mm}
CMC-surface in $\mathbb{R}^3$: Sphere\\
$\phi_1(z,\bar{z})= \bigl(\frac{-2i(z-\bar{z})}{1+|z|^2},
          \frac{-2(z+\bar{z})}{1+|z|^2},\frac{-1+3|z|^2}{1+|z|^2}\bigr)$\par
\centerline{$\Updownarrow$} 
Timelike CMC-surface in $\mathbb{R}^3_1$: One sheeted hyperboloid \\ 
$\phi_2(x,y)=\bigl(-\frac{x-y}{1+xy},-\frac{x+y}{1+xy},
 -\frac{1-3xy}{2(1+xy)}\bigr)$ 
\end{minipage}
}
\end{center}

\subsubsection{Smyth surface in $\mathbb{R}^3$ $\Leftrightarrow$ 
Timelike Smyth surface in $\mathbb{R}^3_1$}\label{subsec-5.2.4} 
   In this subsection we construct a timelike CMC-surface, 
$\phi_2(x,y):W\to\mathbb{R}^3_1$, from the potential of Smyth surface in 
$\mathbb{R}^3$ (cf.\ \eqref{eq-5.2.7}); and we study the relation between the 
Gau{\ss} equation for $\phi_2(x,y):W\to\mathbb{R}^3_1$ and the Painlev\'{e} 
equation of type (III).   
   Henceforth we will use the same notation as in Subsection \ref{subsec-5.2.1}. 
   
   Define a $\widetilde{\Lambda}_{-1,\infty}(\frak{g}_2)_\sigma$-valued, real 
analytic para-holomorphic $1$-form $\eta_\theta(x)$ on $(\mathbb{B}^2,I)$ by 
\begin{equation}\label{eq-5.2.7}
\begin{array}{ll}
  \eta_\theta(x):=\theta^{-1}
    \begin{pmatrix} 
    0 & 1 \\
    x^m & 0
    \end{pmatrix}dx, 
\end{array} 
\end{equation}
where $m\in\mathbb{N}$.
   Taking the morphing condition \eqref{M} into consideration, we define 
$\tau_\theta(y)$ as follows: 
\[
  \tau_\theta(x):=\theta
    \begin{pmatrix} 
    0 & -y^m \\
    -1 & 0
    \end{pmatrix}dy.
\]
   Solve the two initial value problems: 
$A_\theta^{-1}\cdot dA_\theta=\eta_\theta(x)$, 
$B_\theta^{-1}\cdot dB_\theta=\tau_\theta(y)$ and 
$A_\theta(0)\equiv\operatorname{id}\equiv B_\theta(0)$. 
   In terms of Theorem \ref{thm-3.1.5} we factorize 
$(A_\theta,B_\theta)\in\widetilde{\Lambda}^-_*(G_2)_\sigma\times
 \widetilde{\Lambda}^+_*(G_2)_\sigma$ 
as follows: 
$(A_\theta,B_\theta)
  =(C_\theta,C_\theta)\cdot(B^+_\theta,B^-_\theta)$,  
$C_\theta\in\widetilde{\Lambda}(G_2)_\sigma$ and 
$B^+_\theta\in\widetilde{\Lambda}^+_*(G_2)_\sigma$ and 
$B^-_\theta\in\widetilde{\Lambda}^-(G_2)_\sigma$. 
   Then Proposition \ref{prop-3.2.3} enables us to obtain an 
$\mathbb{R}^+$-family of Lorentz harmonic maps 
\[
 (f_2)_\theta=\pi_2\circ C_\theta(x,y):(W,I)
 \longrightarrow G_2/H_2\simeq S^2_1,   
\]
where $W$ is an open neighborhood of $\mathbb{B}^2$ at $(0,0)$. 
   Furthermore, Theorem \ref{thm-4.3.1} tells us that there is an $S^1$-family 
of harmonic maps 
\[
\begin{array}{ll}
 (f_1)_\lambda=\pi_1\circ C'_\lambda(z,\bar{z}):(V,J)
  \longrightarrow G_1/H_1\simeq S^2, 
&  C'_\lambda(0,0)\equiv\operatorname{id}, 
\end{array}  
\]
where 
$C'_\lambda(z,\bar{z}):=C_\lambda(z,\bar{z})\cdot h^\mathbb{C}(z,\bar{z})$ 
(see Theorem \ref{thm-4.3.1} for $V$ and $h^\mathbb{C}(z,\bar{z})$). 
   From the above harmonic map $f_1(z,\bar{z})$, the Sym-Bobenko formula 
enables us to obtain a CMC-surface $\phi_1(z,\bar{z}):V\to\mathbb{R}^3$ (ref.\ 
Subsection \ref{subsec-5.2.1}), which is called the {\it Smyth surface} (cf.\ 
\cite[p.\ 662]{Do-Pe-Wu}). 
   In addition, one can obtain a timelike CMC-surface  
$\phi_2(x,y):W\to\mathbb{R}^3_1$, from the above Lorentz harmonic map 
$f_2(x,y)$. 
   We end this subsection with clarifying an important property of 
$\phi_2(x,y):W\to\mathbb{R}^3_1$: 
\begin{proposition}\label{prop-5.2.1} 
     With the above setting and notation, the Gau{\ss} equation for 
$\phi_2(x,y):W\to\mathbb{R}^3_1$ is the Painlev\'{e} equation of type 
{\rm (III)}.      
\end{proposition}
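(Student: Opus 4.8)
The plan is to extract from the extended framing $C_\theta(x,y)$ the two fundamental invariants of the timelike surface $\phi_2:W\to\mathbb{R}^3_1$—its conformal factor $e^{2u}$ and its Hopf differential—and then to identify the Gau{\ss} equation as the $\theta^0$-component of the integrability condition for $\alpha^\theta=C_\theta^{-1}\cdot dC_\theta$, finally reducing it to Painlev\'e III by means of the self-similarity forced by the potential \eqref{eq-5.2.7}.

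First I would put $\beta^\theta:=C_\theta^{-1}\cdot dC_\theta$ into the standard CMC gauge. By \eqref{eq-3.2.2} it is $\frak{sl}(2,\mathbb{R})$-valued of the loop form $\beta^\theta=\beta_\frak{h}+\theta^{-1}\beta_\frak{m}^++\theta\,\beta_\frak{m}^-$, where the para-holomorphic block $\beta_\frak{m}^+$ and the para-antiholomorphic block $\beta_\frak{m}^-$ carry the two halves of the Hopf differential, while the diagonal block $\beta_\frak{h}$ is built from $u_x\,dx$ and $u_y\,dy$, with $e^{2u}\,dx\,dy$ the induced Lorentz metric. Splitting the flatness equation $d\beta^\theta+\tfrac12[\beta^\theta\wedge\beta^\theta]=0$ by powers of $\theta$ exactly as in the derivation of \eqref{eq-2.2.12} yields the Codazzi equations at orders $\theta^{\mp1}$ (forcing the Hopf differential to be para-holomorphic, i.e.\ $Q=Q(x)$), and the Gau{\ss} equation
\[
 d\beta_\frak{h}+\tfrac12[\beta_\frak{h}\wedge\beta_\frak{h}]
  +[\beta_\frak{m}^+\wedge\beta_\frak{m}^-]=0
\]
at order $\theta^0$. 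Reading off the potential \eqref{eq-5.2.7}, whose lower-left entry is $x^m$, one obtains $Q(x)=c\,x^m$ (and $\tilde Q(y)=\pm c\,y^m$ from the morphing choice \eqref{M} of $\tau_\theta$), so that the Gau{\ss} equation takes the scalar form $u_{xy}=\kappa\,x^m y^m e^{-2u}+\kappa'\,e^{2u}$ for constants $\kappa,\kappa'$ fixed by the normalisation of $H$ and by the Lorentzian signature.

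Next I would exploit the symmetry of \eqref{eq-5.2.7}. Under the para-rotation $x\mapsto sx$, $y\mapsto s^{-1}y$ ($s\in\mathbb{R}^+$), combined with the constant gauge $\operatorname{diag}(s^{m/4},s^{-m/4})$ and the parameter shift $\theta\mapsto\theta\,s^{-(m+2)/2}$, the pair $(\eta_\theta,\tau_\theta)$ is invariant; since the metric is constant along the associated family, $u$ is independent of $\theta$ and hence boost-invariant, so $u=u(t)$ with $t:=xy$. Using $u_{xy}=t\,u''+u'$ and $x^my^m=t^m$, the Gau{\ss} equation collapses to the second-order ODE $t\,u''+u'=\kappa\,t^m e^{-2u}+\kappa'\,e^{2u}$. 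The concluding step is the substitution $w=e^{\mp u}$ together with the power-law change of independent variable $s=t^{(m+2)/2}$ (the same exponent that appeared in the symmetry above), which is the normalisation underlying the classical Smyth--Painlev\'e correspondence; a direct computation then brings the ODE to the canonical Painlev\'e III form.

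I expect the main obstacle to be this last transcription: keeping the signature-dependent signs of $\kappa,\kappa'$ correct—these differ from the Riemannian Smyth case because $e^{2u}\,dx\,dy$ is Lorentzian and because the Sym--Bobenko normalisation of \cite{Ko} and \cite{Do-In-To} carries the sign change recorded in the footnote—and matching the four resulting coefficients to the parameters $(\alpha,\beta,\gamma,\delta)$ of Painlev\'e III. A secondary point needing care is the rigorous justification that every solution issuing from the self-similar frame is of the form $u=u(xy)$; this follows from the boost-invariance just described, but should be verified at the level of $C_\theta$ rather than only at the level of the potential.
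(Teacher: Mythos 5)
Your proposal is correct and follows essentially the same route as the paper: the scaling symmetry of the potential \eqref{eq-5.2.7} under $x\mapsto a\cdot x$, $y\mapsto a^{-1}\cdot y$, conjugation by a constant diagonal $k\in H_2$ and a shift of the loop parameter is transferred to the frame $C_\theta$ via uniqueness of the Birkhoff/Iwasawa factorization (the paper's \eqref{eq-5.2.9}), yielding $u=\Omega(xy)$, $Q(x)=Q_0x^m$, $R(y)=R_0y^m$, after which the Gau{\ss} equation \eqref{eq-5.2.11} reduces to an ODE in $t=xy$ and the substitution $v=e^{\Omega(t)}t^{-m/2}$, $u=2t^{(2+m)/2}/(2+m)$ gives Painlev\'{e} III. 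The one shortcut you take---reading $Q(x)=c\,x^m$ directly off the potential rather than through the dressed frame---is exactly the point you flag yourself at the end, and it is repaired by the equivariance argument at the level of $C_\theta$ that you already describe.
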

\begin{proof}   
   Our first aim is to deduce \eqref{eq-5.2.9} below. 
   For 
$k=\operatorname{diag}(s,1/s)\in H_2=S(GL(1,\mathbb{R})\times GL(1,\mathbb{R}))$, 
let us define real numbers $a=a(k)$ and $b=b(k)$ by $a(k):=s^{-4/m}$ and 
$b(k):=s^{-(4+2m)/m}$, respectively.  
   Since 
$k\cdot\eta_\lambda(x)\cdot k^{-1}=\eta_{(b\cdot\lambda)}(a\cdot x)$, 
$k\cdot\tau_\lambda(y)\cdot k^{-1}
 =\tau_{(b\cdot\lambda)}(a^{-1}\cdot y)$ 
and $A_\lambda(0)\equiv\operatorname{id}\equiv B_\lambda(0)$, we understand 
that   
\begin{equation}\label{eq-5.2.8}
\begin{array}{ll}
k\cdot A_\lambda(x)\cdot k^{-1} = A_{b\cdot\lambda}(a\cdot x),
& k\cdot B_\lambda(y)\cdot k^{-1} = B_{b\cdot\lambda}(a^{-1}\cdot y). 
\end{array} 
\end{equation}
   It is immediate from 
$B_\lambda^{-1}\cdot A_\lambda=(B^-_\lambda)^{-1}\cdot B^+_\lambda$ and 
\eqref{eq-5.2.8} that 
$(k\cdot B^-_\lambda(x,y)\cdot k^{-1})^{-1}
 \cdot(k\cdot B^+_\lambda(x,y)\cdot k^{-1})
 =B^-_{b\cdot\lambda}(a\cdot x,a^{-1}\cdot y)^{-1}
    \cdot B^+_{b\cdot\lambda}(a\cdot x,a^{-1}\cdot y)$  
for any $k\in H_2$ and $\lambda\in S^1$. 
   Therefore, the uniqueness of the Birkhoff decomposition allows us to 
conclude  
\[
\begin{array}{ll}
  k\cdot B^+_\lambda(x,y)\cdot k^{-1}
=B^+_{b\cdot\lambda}(a\cdot x,a^{-1}\cdot y) 
& \mbox{for any $k\in H_2$ and $\lambda\in S^1$}. 
\end{array}   
\]
   The above and \eqref{eq-5.2.8} imply that  
\begin{multline*}
  k\cdot C_\lambda(x,y)\cdot k^{-1}
 =k\cdot A_\lambda(x)\cdot B^+_\lambda(x,y)^{-1}\cdot k^{-1}\\
 =A_{b\cdot\lambda}(a\cdot x)\cdot B^+_{b\cdot\lambda}(a\cdot x,a^{-1}\cdot y)^{-1}
 =C_{b\cdot\lambda}(a\cdot x,a^{-1}\cdot y)
\end{multline*} 
---that is, they imply that   
\begin{equation}\label{eq-5.2.9}
\begin{array}{ll}
  k\cdot C_\lambda(x,y)\cdot k^{-1}=C_{b\cdot\lambda}(a\cdot x,a^{-1}\cdot y) 
& \mbox{for any $k\in H_2$ and $\lambda\in S^1$}.  
\end{array} 
\end{equation}
   Now, let 
$U_\lambda(x,y):=C_\lambda(x,y)^{-1}\cdot\partial_x C_\lambda(x,y)$ and 
$V_\lambda(x,y):=C_\lambda(x,y)^{-1}\cdot\partial_y C_\lambda(x,y)$. 
   We express these Maurer-Cartan forms explicitly as follows: 
\begin{equation}\label{eq-5.2.10}
\begin{split}
& U_\lambda(x,y)
 =\begin{pmatrix}
   u_x(x,y)/4 & -(\lambda^{-1}/2)\cdot H\cdot e^{u(x,y)/2} \\
   \lambda^{-1}\cdot Q(x)\cdot e^{-u(x,y)/2} & -u_x(x,y)/4
  \end{pmatrix},\\
& V_\lambda(x,y)
 =\begin{pmatrix}
   -u_y(x,y)/4 & -\lambda\cdot R(y)\cdot e^{-u(x,y)/2} \\
   (\lambda/2)\cdot H\cdot e^{u(x,y)/2} & u_y(x,y)/4,
  \end{pmatrix}
\end{split}  
\end{equation}
where $H$ ($\neq0$) is constant (cf.\ (2.1.5) in \cite{Ko}). 
   Then, the Gau{\ss} equation for $\phi_2(x,y):W\to\mathbb{R}^3_1$ is  
\begin{equation}\label{eq-5.2.11}
  u_{xy}(x,y)-2\cdot Q(x)\cdot R(y)\cdot e^{-u(x,y)}
 +\frac{1}{2}\cdot H^2\cdot e^{u(x,y)}=0 
\end{equation}
(cf.\ (2.1.7) in \cite{Ko}). 
   This equation will become the Painlev\'{e} equation of type (III) 
later (cf.\ \eqref{eq-5.2.11''}). 
   It follows from \eqref{eq-5.2.9} that 
$\alpha^\lambda(x,y):=C_\lambda(x,y)^{-1}\cdot dC_\lambda(x,y)$ satisfies 
$\alpha^\lambda(x,y)=U_\lambda(x,y)dx+V_\lambda(x,y)dy$ and 
$k\cdot\alpha^\lambda(x,y)\cdot k^{-1}
 =\alpha^{b\cdot\lambda}(a\cdot x,a^{-1}\cdot y)$. 
   Hence   
\[
\begin{array}{ll}
  k\cdot U_\lambda(x,y)\cdot k^{-1}
 =a\cdot U_{b\cdot\lambda}(a\cdot x, a^{-1}\cdot y), 
& 
  k\cdot V_\lambda(x,y)\cdot k^{-1}
 =a^{-1}\cdot V_{b\cdot\lambda}(a\cdot x,a^{-1}\cdot y).  
\end{array}
\] 
   Accordingly we obtain  
\[
\begin{array}{lll}
 u(a\cdot x,a^{-1}\cdot y)=u(x,y), 
& Q(a\cdot x)=a^m\cdot Q(x), 
& R(a^{-1}\cdot y)=a^{-m}\cdot R(y)
\end{array}
\]
from \eqref{eq-5.2.10}. 
   Let $\Omega(x\cdot y):=u(1,x\cdot y)$. 
   Then $\Omega(x\cdot y)=u(x,y)$ follows from 
$u(a\cdot x,a^{-1}\cdot y)=u(x,y)$ and $a:=x^{-1}$. 
   Hence we conclude that 
\begin{equation}\label{eq-5.2.12}
  u_{xy}=\partial_x\partial_y\Omega(x\cdot y)
        =\partial_x(\Omega(x\cdot y)'\cdot x)
        =\Omega(x\cdot y)''\cdot x\cdot y+\Omega(x\cdot y)'.        
\end{equation}
   Since $Q(a\cdot x)=a^m\cdot Q(x)$ and $R(a^{-1}\cdot y)=a^{-m}\cdot R(y)$ 
one can express $Q(x)$ and $R(x)$ as $Q(x)=Q_0\cdot x^m$ and 
$R(y)=R_0\cdot y^m$, respectively, where both $Q_0$ and $R_0$ are constant. 
   Therefore we show  
\begin{equation}\label{eq-5.2.13}
\begin{split}
&  -2\cdot Q(x)\cdot R(y)\cdot e^{-u(x,y)}
 +\frac{1}{2}\cdot H^2\cdot e^{u(x,y)}\\
&=-2\cdot Q_0\cdot R_0\cdot (x\cdot y)^m\cdot e^{-\Omega(x\cdot y)}
 +\frac{1}{2}\cdot H^2\cdot e^{\Omega(x\cdot y)}.  
\end{split}
\end{equation}
   In terms of \eqref{eq-5.2.12} and \eqref{eq-5.2.13} we rewrite 
\eqref{eq-5.2.11} as follows: 
\[\tag{5.2.38$'$}\label{eq-5.2.11'}
  \Omega(t)''\cdot t+\Omega(t)'
 -2\cdot Q_0\cdot R_0\cdot t^m\cdot e^{-\Omega(t)}
 +\frac{1}{2}\cdot H^2\cdot e^{\Omega(t)}=0,
\]
where $t:=x\cdot y$.  
   Furthermore, one can rewrite \eqref{eq-5.2.11'} as follows: 
\[\tag{5.2.38$''$}\label{eq-5.2.11''}
  \frac{d^2v}{du^2}
=\frac{1}{v}\Bigl(\frac{dv}{du}\Bigr)^2-\frac{1}{u}\frac{dv}{du}
 +\frac{1}{u}\Bigl(-\frac{H^2}{2+m}v^2+4\frac{R_0\cdot Q_0}{2+m}\Bigr)
\]
by setting $u:=(2t^{(2+m)/2})/(2+m)$ and $v:=e^{\Omega(t)}\cdot t^{-m/2}$. 
   Consequently we assert that the Gau{\ss} equation \eqref{eq-5.2.11} for 
$\phi_2(x,y):W\to\mathbb{R}^3_1$ is the Painlev\'{e} equation \eqref{eq-5.2.11''} 
of type (III).  
\end{proof}

\subsubsection{Delaunay surface in $\mathbb{R}^3$ $\Leftrightarrow$ 
$K$-surface of revolution in $\mathbb{R}^3$}\label{subsec-5.2.5} 
   In this subsection we will use the following notation:   
\begin{enumerate}
\item[(5.2.41)] 
  $G^\mathbb{C}$: the same notation (5.2.1) as in Subsection \ref{subsec-5.2.1}, 
\item[(5.2.42)]  
  $\sigma$: the same notation (5.2.2) as in Subsection \ref{subsec-5.2.1}, 
\item[(5.2.43)]  
  $\nu_1$: the same notation (5.2.3) as in Subsection \ref{subsec-5.2.1}, 
\item[(5.2.44)]  
  $\nu_2:=\nu_1$,
\item[(5.2.45)]  
  $G^\mathbb{C}/H^\mathbb{C}$: the same notation (5.2.5) as in Subsection 
\ref{subsec-5.2.1}, 
\item[(5.2.46)]  
  $G_1/H_1$: the same notation (5.2.6) as in Subsection \ref{subsec-5.2.1},  
\item[(5.2.47)]  
  $G_2/H_2=G_1/H_1=SU(2)/S(U(1)\times U(1))\simeq S^2$, 
\item[(5.2.48)]  
  $\pi_i$: the projection from $G_i$ onto $G_i/H_i$ ($i=1,2$). 
\end{enumerate} 
\setcounter{equation}{48}

   The main purpose in this subsection is to interrelate a surface of 
revolution in $\mathbb{R}^3$ (i.e., a Delaunay surface in $\mathbb{R}^3$) with 
a $K$-surface of revolution in $\mathbb{R}^3$ by means of Theorem 
\ref{thm-4.3.1} (see Theorem \ref{thm-5.2.7}). 
   Here, a {\it $K$-surface} means a surface of constant negative 
curvature $K=-1$. 
   Such a surface is sometimes called a {\it pseudospherical surface}.\par
   
   According to Toda \cite{To} (see \cite{To2} also), one can characterize 
each $K$-surface $M$ in $\mathbb{R}^3$ by an arc length asymptotic line 
coordinate system $(x,y)$ on $M$ and the angle function $\omega(x,y)$ with 
respect to $(x,y)$ by the loop group method.  
   For our purpose, we need to specialize her way concretely to surfaces 
of revolution. 
   First we recall    
\begin{lemma}\label{lem-5.2.2}
   Let $f_{{\mbox{\tiny pseud}}}(u,v)$, $f_{\mbox{\tiny hyper}}(u,v)$ and 
$f_{\mbox{\tiny conic}}(u,v)$ denote the $K$-surfaces of revolution given in 
Gray \cite[Chapter 19.3]{Gray}\footnote{Erratum: p.\ 381, the equation (19.4) 
in \cite{Gray}, should be $-i\sqrt{a^2-b^2}E(iv/a,-b^2/(a^2-b^2))$ instead of 
$-i\sqrt{a^2-b^2}E(iv/a,b^2/(a^2-b^2))$.}, respectively$:$
\[
\begin{array}{ll}
{\displaystyle
   f_{\mbox{\tiny pseud}}(u,v)
    =\bigl(\cos u\sin v,\sin u\sin v,\cos v+\log(\tan v/2)\bigr)}; 
& \\
{\displaystyle
   f_{\mbox{\tiny hyper}}(u,v)
    =\bigl(b\cos u\cosh v,b\sin u\cosh v,
           \int^v_0\sqrt{1-b^2\sinh^2(t)}dt\bigr)}, 
& 0<b;\\
{\displaystyle
   f_{\mbox{\tiny conic}}(u,v)
    =\bigl(b\cos u\sinh v,b\sin u\sinh v,
           \int^v_0\sqrt{1-b^2\cosh^2(t)}dt\bigr)}, 
& 0<b<1.
\end{array}
\] 
   Then in each case, an arc length asymptotic line parametrization 
$(x,y)$ is given by 
\[
\begin{array}{lll}
\mbox{Pseudosphere}: 
& {\displaystyle u=x+y}, 
& {\displaystyle v=2\tan^{-1}(\exp(x-y))};\\
\mbox{Hyperboloid type}: 
& {\displaystyle u=\frac{x+y}{\sqrt{1+b^2}}}, 
& {\displaystyle v=-i\cdot{\rm am}\bigl(\frac{i(x-y)}{\sqrt{1+b^2}},ib\bigr)};\\
\mbox{Conic type}:
& {\displaystyle u=\frac{x+y}{\sqrt{1-b^2}}}, 
& {\displaystyle v=-i\cdot{\rm am}\bigl(i(x-y),\frac{ib}{\sqrt{1-b^2}}\bigr)} 
\end{array} 
\] 
and the first fundamental form $I^{\rm st}$ is expressed as  
\[
\begin{array}{ll}
\mbox{Pseudosphere}: & 
{\displaystyle  I^{\rm st}_{\mbox{\tiny pseud}}
=dx^2+2\bigl(-1+\frac{2}{\cosh^2(x-y)}\bigr)dxdy+dy^2};\\
\mbox{Hyperboloid type}: & 
{\displaystyle  I^{\rm st}_{\mbox{\tiny hyper}}
=dx^2+2\Bigl(1-\frac{2}{1+b^2}\cdot 
 {\rm dn}^2\bigl(\frac{i(x-y)}{\sqrt{1+b^2}},ib\bigr)\Bigr)dxdy+dy^2};\\
\mbox{Conic type}: & 
{\displaystyle  I^{\rm st}_{\mbox{\tiny conic}}
=dx^2+2\Bigl(1-2\cdot{\rm dn}^2\bigl(i(x-y),\frac{ib}{\sqrt{1-b^2}}\bigr)\Bigr)dxdy
 +dy^2}
\end{array}
\]
$($see Remark $\ref{rem-5.2.3}$ for ${\rm am}(u,k)$ and ${\rm dn}(u,k))$. 
\end{lemma}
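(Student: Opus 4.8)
The plan is to treat all three cases uniformly as surfaces of revolution $f(u,v)=(\rho(v)\cos u,\rho(v)\sin u,\zeta(v))$ and to verify by direct computation that the prescribed substitution $(x,y)\mapsto(u,v)$ carries the coordinate curves to asymptotic lines traversed at unit speed. First I would record the standard formulas: for such an $f$ the first fundamental form in $(u,v)$ is $E\,du^2+G\,dv^2$ with $E=\rho^2$, $F=0$, $G=\rho'^2+\zeta'^2$, and the second fundamental form is diagonal ($M=0$, so the coordinate curves are lines of curvature) with $L=-\rho\zeta'/\sqrt{G}$ and $N=(\rho''\zeta'-\rho'\zeta'')/\sqrt{G}$; the asymptotic directions are therefore the two solutions of $L\,du^2+N\,dv^2=0$. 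For the hyperboloid and conic profiles $\zeta$ is normalized so that $G=\rho'^2+\zeta'^2\equiv 1$ (via $\zeta'=\sqrt{1-b^2\sinh^2 v}$, resp.\ $\zeta'=\sqrt{1-b^2\cosh^2 v}$); combined with $K=-1=LN/E$ this gives $L=-\rho\zeta'$, $N=\rho/\zeta'$, so the asymptotic directions are simply $\zeta'\,du=\pm dv$. For the pseudosphere one computes instead $E=\sin^2 v$ and $G=\cot^2 v$, with asymptotic directions $\sin v\,du=\pm dv$.

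In each case two things must be read off from the substitution: the coefficient $E(\partial_x u)^2+G(\partial_x v)^2$ of $dx^2$ must equal $1$ (arc length), and $L(\partial_x u)^2+N(\partial_x v)^2$ must vanish (asymptotic), and likewise with $\partial_y$. Writing $s:=x-y$, the pseudosphere case is elementary: $v=2\tan^{-1}(e^{s})$ gives $\sin v=1/\cosh s$, $\cos v=-\tanh s$, $\partial_x v=\sin v=-\partial_y v$, while $u=x+y$ gives $\partial_x u=\partial_y u=1$. Thus $E\,du^2+G\,dv^2=\sin^2 v\,(dx+dy)^2+\cos^2 v\,(dx-dy)^2$ collapses to $dx^2+dy^2+2(\sin^2 v-\cos^2 v)\,dx\,dy$, and $\sin^2 v-\cos^2 v=-1+2/\cosh^2 s$ is exactly $I^{\rm st}_{\mbox{\tiny pseud}}$; the identity $L\cdot 1+N\sin^2 v=-\cos v\sin v+\cot v\sin^2 v=0$ confirms the coordinate curves are asymptotic.

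For the hyperboloid and conic cases the same scheme runs on Jacobi functions through $\tfrac{d}{d\xi}{\rm am}(\xi,k)={\rm dn}(\xi,k)$ and $\cos({\rm am}(\xi,k))={\rm cn}(\xi,k)$. With $v=-i\,{\rm am}(\xi,k)$ one obtains $\cosh v={\rm cn}(\xi,k)$ and $\partial_x v={\rm dn}(\xi,k)\cdot\partial_x\xi$; for the hyperboloid ($\xi=i(x-y)/\sqrt{1+b^2}$, $k=ib$) this yields $E=b^2{\rm cn}^2$, $G=1$, $\partial_x u=1/\sqrt{1+b^2}$, $\partial_x v={\rm dn}/\sqrt{1+b^2}$, so the coefficient of $dx^2$ is $(b^2{\rm cn}^2+{\rm dn}^2)/(1+b^2)$. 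The crux is the Jacobi identity: since $k^2=-b^2$ one has ${\rm dn}^2=1-k^2{\rm sn}^2=1+b^2{\rm sn}^2$, whence $b^2{\rm cn}^2+{\rm dn}^2=b^2({\rm cn}^2+{\rm sn}^2)+1=b^2+1$, forcing the $dx^2$- and $dy^2$-coefficients to be $1$ and leaving the cross term $1-\tfrac{2}{1+b^2}{\rm dn}^2$, i.e.\ $I^{\rm st}_{\mbox{\tiny hyper}}$. The conic case is analogous with $\sinh v$ replacing $\cosh v$ (so $E=b^2\sinh^2 v=-b^2{\rm sn}^2$ via ${\rm cn}^2+{\rm sn}^2=1$), $\xi=i(x-y)$, $k=ib/\sqrt{1-b^2}$, $k^2=-b^2/(1-b^2)$, the identity ${\rm dn}^2-\tfrac{b^2}{1-b^2}{\rm sn}^2=1$ now collapsing the diagonal coefficients to $1$ and producing the cross term $1-2\,{\rm dn}^2$. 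In both cases the check $\partial_x v=\zeta'\,\partial_x u$ (equivalently ${\rm dn}=\zeta'$ up to the scaling) shows the coordinate curves realize $\zeta'\,du=\pm dv$, hence are asymptotic. The only real obstacle is the bookkeeping with Jacobi functions of imaginary modulus and argument—tracking the factors of $i$ in $v=-i\,{\rm am}(\cdots)$ and selecting the correct elliptic identity so that the diagonal coefficients collapse exactly to $1$; everything else is routine differentiation.
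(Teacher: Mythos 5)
Your proposal is correct, and it is a genuinely different (and more self-contained) route than the paper's. The paper's proof of Lemma \ref{lem-5.2.2} is essentially a citation: it invokes Gray's \emph{Mathematica}-based procedure for computing asymptotic line parametrizations of $K$-surfaces and asserts that running it on the three profile curves yields the stated formulas. You instead verify everything by hand: the standard surface-of-revolution formulas ($E=\rho^2$, $F=M=0$, $G=\rho'^2+\zeta'^2$, $L=-\rho\zeta'/\sqrt{G}$), the observation that the hyperboloid and conic profiles are normalized so that $G\equiv 1$ and hence (using $K=-1$, which is part of the hypothesis) $N=\rho/\zeta'$ and the asymptotic condition reduces to $\zeta'\,du=\pm dv$, and then a direct check that the given substitutions satisfy $\partial_x v=\zeta'\,\partial_x u$, $\partial_y v=-\zeta'\,\partial_y u$ and make the diagonal coefficients of $I^{\rm st}$ collapse to $1$ via $\mathrm{dn}^2=1-k^2\mathrm{sn}^2$ with the imaginary moduli $k=ib$ and $k=ib/\sqrt{1-b^2}$. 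I checked the key identities ($b^2\mathrm{cn}^2+\mathrm{dn}^2=1+b^2$ for $k^2=-b^2$; $-\tfrac{b^2}{1-b^2}\mathrm{sn}^2+\mathrm{dn}^2=1$ for $k^2=-b^2/(1-b^2)$; $\sin^2 v-\cos^2 v=-1+2/\cosh^2(x-y)$ for the pseudosphere) and they all come out as you claim. What your approach buys is a verifiable proof independent of software and of access to Gray's code; what the paper's approach buys is brevity and an explicit pointer to where the parametrizations were originally produced. The only presentational slip is the parenthetical ``via $\mathrm{cn}^2+\mathrm{sn}^2=1$'' attached to $E=-b^2\mathrm{sn}^2$ in the conic case --- the relevant fact there is simply $\sinh v=-i\,\mathrm{sn}(\xi,k)$, with the Pythagorean identity only needed later for the asymptotic check --- but this does not affect correctness.
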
 
\begin{proof} 
   Gray \cite{Gray} presents a method of computing asymptotic line 
parametrizations by the software {\it Mathematica}.  
   His arguments \cite[p.\ 329--330]{Gray}, together with the program in 
\cite[p.\ 328]{Gray}, enable us to obtain the arc length asymptotic line 
parametrizations $(x,y)$ for the $K$-surfaces $f_{{\mbox{\tiny pseud}}}(u,v)$, 
$f_{\mbox{\tiny hyper}}(u,v)$ and   $f_{\mbox{\tiny conic}}(u,v)$, respectively.  
\end{proof}

\begin{remark}\label{rem-5.2.3} 
   Throughout this paper, we use the notation ${\rm am}(u,k)$, ${\rm sn}(u,k)$, 
${\rm cn}(u,k)$, ${\rm dn}(u,k)$ and ${\rm sd}(u,k)$ as in Byrd-Friedman 
\cite{By-Fr} for the Jacobi functions.    
\end{remark}

\begin{lemma}\label{lem-5.2.4} 
   Let $\omega_{\mbox{\tiny pseud}}(x,y)$, $\omega_{\mbox{\tiny hyper}}(x,y)$ 
and $\omega_{\mbox{\tiny conic}}(x,y)$ be the real analytic functions around 
$(0,0)$ defined by 
\[
\begin{array}{ll}
{\displaystyle \omega_{\mbox{\tiny pseud}}(x,y):=2\sin^{-1}(\tanh(x-y))}; 
& \\
{\displaystyle \omega_{\mbox{\tiny hyper}}(x,y)
:=2\sin^{-1}\Bigl(\frac{1}{\sqrt{1+b^2}}\cdot
    {\rm dn}\bigl(\frac{i(x-y)}{\sqrt{1+b^2}},ib\bigr)\Bigr)}, 
& 0<b;\\
{\displaystyle \omega_{\mbox{\tiny conic}}(x,y)
:=-2\sin^{-1}\Bigl(b\cdot{\rm sd}\bigl(\frac{x-y}{\sqrt{1-b^2}},
    \sqrt{1-b^2}\bigr)\Bigr)+\pi},   
& 0<b<1.    
\end{array} 
\]  
   Then, they satisfy 
\[   
\begin{array}{@{}lll}
{\rm (i.1)} & \mbox{Pseudosphere}: & 
I^{\rm st}_{\mbox{\tiny pseud}}
 =dx^2+2\cos(\omega_{\mbox{\tiny pseud}}(x,y))dxdy+dy^2;\\
{\rm (i.2)} & \mbox{Hyperboloid type}: & 
I^{\rm st}_{\mbox{\tiny hyper}}
 =dx^2+2\cos(\omega_{\mbox{\tiny hyper}}(x,y))dxdy+dy^2;\\
{\rm (i.3)} & \mbox{Conic type}: & 
I^{\rm st}_{\mbox{\tiny conic}}
 =dx^2+2\cos(\omega_{\mbox{\tiny conic}}(x,y))dxdy+dy^2;
\end{array}
\]  
and furthermore, they are solutions to the sine-Gordon equation 
$\partial_x\partial_y\omega=\sin\omega$.  
\end{lemma}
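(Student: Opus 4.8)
The plan is to treat the two assertions separately and, in each, to exploit the crucial observation that every one of the three functions $\omega_{\mbox{\tiny pseud}}$, $\omega_{\mbox{\tiny hyper}}$, $\omega_{\mbox{\tiny conic}}$ depends on $(x,y)$ only through the single variable $t:=x-y$. For assertion (i) I would rewrite each middle coefficient of $I^{\rm st}$ supplied by Lemma \ref{lem-5.2.2} by means of the half-angle identity $\cos\omega=1-2\sin^2(\omega/2)$, so that in every case it suffices to read off $\sin(\omega/2)$ and square it. For assertion (ii), since $\omega=\omega(t)$ one has $\partial_x\partial_y\omega=-\omega''(t)$, whence the sine-Gordon equation $\partial_x\partial_y\omega=\sin\omega$ collapses to the pendulum equation $\omega''(t)=-\sin\omega(t)$, and the whole lemma reduces to three one-variable verifications.

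For (i.1) and (i.2) the computation is elementary. In the pseudospherical case $\sin(\omega_{\mbox{\tiny pseud}}/2)=\tanh t$, so $\cos\omega_{\mbox{\tiny pseud}}=1-2\tanh^2 t=-1+2\,{\rm sech}^2 t$, which is exactly the coefficient $-1+2/\cosh^2(x-y)$ of Lemma \ref{lem-5.2.2}; in the hyperboloid case $\sin(\omega_{\mbox{\tiny hyper}}/2)=\frac{1}{\sqrt{1+b^2}}{\rm dn}(\cdots)$ gives $\cos\omega_{\mbox{\tiny hyper}}=1-\frac{2}{1+b^2}{\rm dn}^2(\cdots)$ directly. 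The conic case (i.3) is the one that requires real work: here $\omega_{\mbox{\tiny conic}}=\pi-2\sin^{-1}(b\,{\rm sd}(\cdots))$, so $\cos\omega_{\mbox{\tiny conic}}=-1+2b^2{\rm sd}^2\bigl(\tfrac{t}{\sqrt{1-b^2}},\sqrt{1-b^2}\bigr)$, and matching this with $1-2\,{\rm dn}^2\bigl(it,\tfrac{ib}{\sqrt{1-b^2}}\bigr)$ amounts to the identity
\[
 b^2\,{\rm sd}^2\Bigl(\tfrac{t}{\sqrt{1-b^2}},\sqrt{1-b^2}\Bigr)
 +{\rm dn}^2\Bigl(it,\tfrac{ib}{\sqrt{1-b^2}}\Bigr)=1.
\]
Writing $k:=\sqrt{1-b^2}$ (so the complementary modulus is $b$) and $s:=t/k$, I would first apply Jacobi's imaginary-argument transformation and then the reciprocal-modulus transformation (both in Byrd--Friedman \cite{By-Fr}) to obtain ${\rm dn}\bigl(iks,\tfrac{ib}{k}\bigr)={\rm cd}(s,k)$; the identity then reduces to $b^2{\rm sn}^2(s,k)+{\rm cn}^2(s,k)={\rm dn}^2(s,k)$, which is immediate from ${\rm cn}^2=1-{\rm sn}^2$ and ${\rm dn}^2=1-k^2{\rm sn}^2$ together with $b^2=1-k^2$.

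For (ii) I would verify $\omega''=-\sin\omega$ in each case. The pseudospherical case is direct: from $\omega_{\mbox{\tiny pseud}}=2\sin^{-1}(\tanh t)$ one gets $\omega_{\mbox{\tiny pseud}}'=2\,{\rm sech}\,t$, hence $\omega_{\mbox{\tiny pseud}}''=-2\,{\rm sech}\,t\,\tanh t$, while $\sin\omega_{\mbox{\tiny pseud}}=2\tanh t\,{\rm sech}\,t$, so the two agree up to sign. For the two elliptic cases, writing $\omega=2\sin^{-1}\phi$ (resp.\ $\pi-2\sin^{-1}\phi$) with $\phi$ the relevant Jacobi function, the equation $\omega''=-\sin\omega$ is equivalent to $\phi''(1-\phi^2)+\phi(\phi')^2+\phi(1-\phi^2)^2=0$, which I would confirm by differentiating $\phi$ twice and substituting the first-order relation expressing $(\phi')^2$ as a quartic in $\phi$ that comes from the differential equation satisfied by ${\rm dn}$ (resp.\ ${\rm sd}$). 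Alternatively, since assertion (i) already exhibits $I^{\rm st}=dx^2+2\cos\omega\,dxdy+dy^2$ as the first fundamental form of a surface with $K=-1$ in asymptotic-line arc-length coordinates, the classical Gauss equation for pseudospherical surfaces furnishes $\partial_x\partial_y\omega=\sin\omega$ with no further elliptic-function manipulation; I would likely present the direct computation but record this geometric shortcut.

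The main obstacle throughout is the conic type, because its Jacobi functions carry an \emph{imaginary} argument $i(x-y)$ and an \emph{imaginary} modulus $ib/\sqrt{1-b^2}$. Handling it correctly forces the combined use of Jacobi's imaginary-argument and reciprocal-modulus transformations, and care is needed to stay within the conventions of Byrd--Friedman \cite{By-Fr} so that the intermediate modulus $1/k>1$ is properly reduced; once that reduction is in place, both assertion (i.3) and the conic part of (ii) follow from the standard algebraic relations among ${\rm sn}$, ${\rm cn}$, ${\rm dn}$.
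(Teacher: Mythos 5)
Your proposal is correct and follows essentially the same route as the paper: assertion (i) via the half-angle identity $\cos\omega=1-2\sin^2(\omega/2)$, with the conic case handled by converting the imaginary-argument, imaginary-modulus ${\rm dn}$ into real Jacobi functions, and assertion (ii) by a direct one-variable verification in $t=x-y$. The only substantive differences are in execution. For (i.3) the paper invokes the single transformation formula ${\rm sn}(iu,ib/\sqrt{1-b^2})=i\sqrt{1-b^2}\,{\rm sd}(u/\sqrt{1-b^2},\sqrt{1-b^2})$ from Byrd--Friedman together with $k^2{\rm sn}^2+{\rm dn}^2=1$, rather than chaining the imaginary-argument and reciprocal-modulus transformations as you do; both yield the same identity $b^2{\rm sd}^2+{\rm dn}^2=1$. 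For the sine-Gordon part the paper (treating only the conic case explicitly) avoids second derivatives of Jacobi functions by the small trick of computing $\partial_x\omega\cdot\sin\omega=-2\,\partial_x\bigl(\cos^2(\omega/2)\bigr)=\partial_x\omega\cdot\partial_x\partial_y\omega$ and cancelling the nonvanishing factor $\partial_x\omega$, whereas you reduce to the pendulum equation $\omega''=-\sin\omega$ and substitute the first-order quartic relation for $(\phi')^2$; this is equivalent but requires slightly more elliptic-function bookkeeping. Your observation that the classical Gauss equation for a $K=-1$ surface in arc-length asymptotic coordinates gives $\partial_x\partial_y\omega=\sin\omega$ for free, once (i) identifies $\omega$ as the angle function, is a legitimate shortcut the paper does not use.
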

\begin{proof}  
   Both (i.1) and (i.2) are immediate from $\cos\omega=1-2\sin^2(\omega/2)$. 
   Let us show (i.3).  
   By direct computations we have  
$\cos(\omega_{\mbox{\tiny conic}}/2)
 =\sin\bigl((\pi/2)-(\omega_{\mbox{\tiny conic}}/2)\bigr)
  =b\cdot{\rm sd}\bigl((x-y)/\sqrt{1-b^2},\sqrt{1-b^2}\bigr)$, 
and 
\begin{equation}\label{eq-5.2.14}
   \cos^2\frac{\omega_{\mbox{\tiny conic}}}{2}
   =b^2\cdot{\rm sd}^2\bigl(\frac{x-y}{\sqrt{1-b^2}},\sqrt{1-b^2}\bigr).
\end{equation}
   Transformation formulas in \cite[p.\ 38]{By-Fr} lead to  
\[
{\rm sn}(iu,ib/\sqrt{1-b^2})
 =i\sqrt{1-b^2}\cdot{\rm sd}(u/\sqrt{1-b^2},\sqrt{1-b^2}).
\] 
   Therefore, \eqref{eq-5.2.14} and $k^2\cdot{\rm sn}^2(u,k)+{\rm dn}^2(u,k)=1$ 
yield that 
\[
  \frac{1+\cos\omega_{\mbox{\tiny conic}}}{2}
  =\cos^2\frac{\omega_{\mbox{\tiny conic}}}{2}
  =-\frac{b^2}{1-b^2}\cdot{\rm sn}^2\bigl(i(x-y),\frac{ib}{\sqrt{1-b^2}}\bigr)
  =1-{\rm dn}^2\bigl(i(x-y),\frac{ib}{\sqrt{1-b^2}}\bigr).
\] 
   Accordingly one deduces  
$\cos(\omega_{\mbox{\tiny conic}}(x,y))
 =1-2\cdot{\rm dn}^2(i(x-y),ib/\sqrt{1-b^2})$, 
and thus (i.3) follows. 
   Now, the rest of this proof is to demonstrate that $\omega_{\mbox{\tiny pseud}}$, 
$\omega_{\mbox{\tiny hyper}}$ and $\omega_{\mbox{\tiny conic}}$ are solutions 
to the sine-Gordon equation, respectively. 
   We will only prove that $\omega_{\mbox{\tiny conic}}$ is a solution to 
the equation, because one can consider the other cases in a similar way. 
   Note that ${\rm dn}((x-y)/\sqrt{1-b^2},\sqrt{1-b^2})$ is positive around 
$(0,0)$ because of $0<b<1$. 
   On the one hand, direct computations show  
\[
\begin{split}
&  \partial_x\omega_{\mbox{\tiny conic}}=-\frac{2}{\sqrt{1-b^2}}
 \cdot\frac{1}{{\rm dn}((x-y)/\sqrt{1-b^2},\sqrt{1-b^2})};\\
&  \partial_x\partial_y\omega_{\mbox{\tiny conic}}
=\frac{2}{\sqrt{1-b^2}}\cdot
 \frac{{\rm sn}((x-y)/\sqrt{1-b^2},\sqrt{1-b^2})
         \cdot{\rm cn}((x-y)/\sqrt{1-b^2},\sqrt{1-b^2})}
      {{\rm dn}^2((x-y)/\sqrt{1-b^2},\sqrt{1-b^2})}.
\end{split}      
\]  
   One the other hand, it follows from \eqref{eq-5.2.14} that 
\begin{multline*}
 \partial_x\omega_{\mbox{\tiny conic}}\cdot\sin\omega_{\mbox{\tiny conic}}
 =-2\cdot\frac{\partial}{\partial x}\cos^2\frac{\omega_{\mbox{\tiny conic}}}{2}
 =-2b^2\cdot\frac{\partial}{\partial x}
    {\rm sd}^2\bigl(\frac{x-y}{\sqrt{1-b^2}},\sqrt{1-b^2}\bigr)\\
 =-\frac{4b^2}{\sqrt{1-b^2}}\cdot
     \frac{{\rm sn}((x-y)/\sqrt{1-b^2},\sqrt{1-b^2})
            \cdot{\rm cn}((x-y)/\sqrt{1-b^2},\sqrt{1-b^2})}
      {{\rm dn}^3((x-y)/\sqrt{1-b^2},\sqrt{1-b^2})}\\
 =\partial_x\omega_{\mbox{\tiny conic}}\cdot\partial_x\partial_y\omega_{\mbox{\tiny conic}}. 
\end{multline*}
   Therefore, one has 
$\partial_x\partial_y\omega_{\mbox{\tiny conic}}
 =\sin\omega_{\mbox{\tiny conic}}$ 
by virtue of $\partial_x\omega_{\mbox{\tiny conic}}<0$.   
\end{proof}

\begin{remark}\label{rem-5.2.5} 
   (i) The solution $\omega_{\mbox{\tiny pseud}}(x,y)$ to the sine-Gordon 
equation in Lemma \ref{lem-5.2.4} can be 
rewritten as follows: 
\begin{equation}\label{eq-5.2.15}
 \omega_{\mbox{\tiny pseud}}(x,y)=4\tan^{-1}(\exp(x-y))-\pi.
\end{equation}
   Indeed, $f(x,y):=4\tan^{-1}(\exp(x-y))-\pi$ is analytic and satisfies  
$\omega_{\mbox{\tiny pseud}}(0,0)=f(0,0)$, 
$\partial_x\omega_{\mbox{\tiny pseud}}=\partial_x f$ 
and $\partial_y\omega_{\mbox{\tiny pseud}}=\partial_yf$. 
   (ii) From every solution $\omega(x,y)$ to the sine-Gordon equation, one can 
construct another solution $\omega'(x,y)$ to the sine-Gordon equation by 
setting 
\[
   \omega'(x,y):=\omega(x,-y)+\pi.
\]
   Consequently, $\omega_{\mbox{\tiny pseud}}'(x,y)=4\tan^{-1}(\exp(x+y))$ 
becomes a solution to the sine-Gordon equation by virtue of \eqref{eq-5.2.15}. 
   Toda \cite{To} uses this solution to study pseudospheres.     
\end{remark}

   For the $K$-surfaces $f_{\mbox{\tiny pseud}}$, $f_{\mbox{\tiny hyper}}$ 
and $f_{\mbox{\tiny conic}}$ in Lemma \ref{lem-5.2.2}, we have obtained arc length 
asymptotic line parametrizations $(x,y)$ and the angle functions $\omega(x,y)$ 
with respect to $(x,y)$, respectively (cf.\ Lemmas \ref{lem-5.2.2} and 
\ref{lem-5.2.4}). 
   According to Toda \cite{To} one can, up to an isometry of $\mathbb{R}^3$, 
reconstruct the $K$-surface from $\omega(x,y)$ and the following potential 
$(\eta_\theta(x),\tau_\theta(y))$: 
\begin{equation}\label{eq-5.2.16}
\begin{split}
& \eta_\theta(x):=\frac{i\theta^{-1}}{2}
  \begin{pmatrix}
  0 & e^{i(\omega(x,0)-\omega(0,0))} \\
  e^{-i(\omega(x,0)-\omega(0,0))} & 0 
  \end{pmatrix}dx,\\
& \tau_\theta(y):=\frac{-i\theta}{2}
  \begin{pmatrix}
  0 & e^{-i\omega(0,y)} \\
  e^{i\omega(0,y)} & 0 
  \end{pmatrix}dy. 
\end{split}   
\end{equation}

\begin{remark}\label{rem-5.2.6} 
   This is not difficult to verify that for 
$\omega(x,y)=\omega_{\mbox{\tiny conic}}(x,y)$ the above potential 
$(\eta_\theta(x),\tau_\theta(y))$ satisfies the morphing condition \eqref{M} 
in Theorem \ref{thm-4.3.1}, while this is not true for the angle functions 
$\omega_{\mbox{\tiny pseud}}$ and $\omega_{\mbox{\tiny hyper}}$ in 
Lemma \ref{lem-5.2.4}. 
\end{remark}

   By means of Theorem \ref{thm-4.3.1}, we will construct a harmonic map 
$f_1(z,\bar{z}):\mathbb{A}^2\to G_1/H_1\simeq S^2$ and a Lorentz harmonic map 
$f_2(x,y):\mathbb{B}^2\to G_2/H_2\simeq S^2$ from the angle function 
$\omega_{\mbox{\tiny conic}}$; and interrelate the associated CMC-surfaces  
$\phi_1(z,\bar{z}):\mathbb{A}^2\to\mathbb{R}^3$ and $K$-surfaces  
$\phi_2(x,y):\mathbb{B}^2\to\mathbb{R}^3$ using $f_1(z,\bar{z})$ and 
$f_2(x,y)$.  
   One will see that $\phi_1(z,\bar{z})$ is a Delaunay surface and 
$\phi_2(x,y)$ is a conic $K$-surface of revolution (cf.\ Theorem 
\ref{thm-5.2.7}).\par

   First, we define a real analytic, para-pluriharmonic potential 
$(\eta_\theta(x),\tau_\theta(y))$ on $(U,I)$ by \eqref{eq-5.2.16} with 
$\omega(x,y)=\omega_{\mbox{\tiny conic}}(x,y)$ as given in 
Lemma \ref{lem-5.2.4}. 
   Here $U$ denotes any open neighborhood of $\mathbb{B}^2$ at $(0,0)$ such 
that $\omega_{\mbox{\tiny conic}}(x,y)$ is analytic on $U$. 
   As remarked above, $(\eta_\theta(x),\tau_\theta(y))$ satisfies the morphing 
condition \eqref{M}.   
   Next, let us solve the two initial value problems: 
$(A_\theta)^{-1}\cdot dA_\theta=\eta_\theta$ and 
$(B_\theta)^{-1}\cdot dB_\theta=\tau_\theta$ with 
$A_\theta(0,0)\equiv\operatorname{id}\equiv B_\theta(0,0)$; and factorize 
$(A_\theta,B_\theta)
 \in\widetilde{\Lambda}(G_2)_\sigma\times\widetilde{\Lambda}(G_2)_\sigma$ 
in the Iwasawa decomposition (cf.\ Theorem \ref{thm-3.1.5}): 
\[ 
\begin{array}{llll}
 (A_\theta,B_\theta)=(C_\theta,C_\theta)\cdot(B^+_\theta,B^-_\theta), 
& C_\theta\in\widetilde{\Lambda}(G_2)_\sigma, 
& B^+_\theta\in\widetilde{\Lambda}^+_*(G_2)_\sigma, 
& B^-_\theta\in\widetilde{\Lambda}^-(G_2)_\sigma. 
\end{array}
\]
   Then Proposition \ref{prop-3.2.3} assures that there exists an open 
neighborhood $W$ of $U$ at $(0,0)$, and  
$(f_2)_\theta:=\pi_2\circ C_\theta(x,y):(W,I)\to G_2/H_2\simeq S^2$ is a 
Lorentz harmonic map for any $\theta\in\mathbb{R}^+$. 
   Moreover, by Theorem \ref{thm-4.3.1} there exist an open neighborhood $V$ of 
$\mathbb{A}^2$ at $(0,0)$ and a smooth map $h^\mathbb{C}:V\to H^\mathbb{C}$ 
such that 
$(f_1)_\lambda:=\pi_1\circ C_\lambda'(z,\bar{z}):(V,J)\to G_1/H_1\simeq S^2$ is 
a harmonic map for any $\lambda\in S^1$, where 
$C_\lambda':=C_\lambda\cdot h^\mathbb{C}$. 
   Accordingly we have obtained a harmonic map $f_1(z,\bar{z})$ and a Lorentz 
harmonic map $f_2(x,y)$ from the potential \eqref{eq-5.2.16}: 
\[
\begin{array}{lll}
 (f_1)_\lambda=\pi_1\circ C_\lambda'(z,\bar{z}):(V,J)\to G_1/H_1\simeq S^2,
& C_\lambda'(0,0)\equiv\operatorname{id}, & \lambda\in S^1;\\ 
 (f_2)_\theta=\pi_2\circ C_\theta(x,y):(W,I)\to G_2/H_2\simeq S^2, 
& C_\theta(0,0)\equiv\operatorname{id}, & \theta\in\mathbb{R}^+.\\ 
\end{array}
\]  
   From $f_1(z,\bar{z})_\lambda$ and $f_2(x,y)_\theta$ one obtains a Delaunay 
surface and a conic $K$-surface of revolution, respectively: 
\begin{theorem}\label{thm-5.2.7} 
   Let $(f_1)_\lambda=\pi_1\circ C_\lambda'(z,\bar{z}):(V,J)\to S^2$ and 
$(f_2)_\theta=\pi_2\circ C_\theta(x,y):(W,I)\to S^2$ be the above harmonic 
map and Lorentz harmonic map. 
   Let $\phi_1(z,\bar{z})_\lambda:V\to\mathbb{R}^3$ $($resp.\ 
$\phi_2(x,y)_\theta:W\to\mathbb{R}^3)$ denote the CMC-surface $($resp.\ 
$K$-surface$)$ determined by the Sym-Bobenko formula $($resp.\ the Sym 
formula$):$    
\[
\begin{split}
&
\phi_1(z,\bar{z})_\lambda:=
  i\cdot\lambda\cdot\frac{\partial C'_\lambda}{\partial\lambda}
     \cdot {C'_\lambda}^{-1} 
      +\frac{1}{2}\cdot\operatorname{Ad}(C'_\lambda)\cdot
     \begin{pmatrix}
      i & 0 \\
      0 & -i \\
     \end{pmatrix},\\
& 
\phi_2(x,y)_\theta:=\theta\cdot\frac{\partial C_\theta}{\partial\theta}
     \cdot C_\theta^{-1}.
\end{split}     
\]
   Then, $\phi_1(z,\bar{z})_\lambda:V\to\mathbb{R}^3$ is a Delaunay surface, 
and $\phi_2(x,y)_\theta:W\to\mathbb{R}^3$ is a conic $K$-surface of revolution.      
\end{theorem}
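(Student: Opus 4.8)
The plan is to treat the two surfaces separately but to extract the surface-of-revolution property in both cases from a single one-parameter symmetry of the extended frame. The starting observation is that the angle function $\omega_{\mbox{\tiny conic}}(x,y)$ depends only on $x-y$ (Lemma \ref{lem-5.2.4}), hence is invariant under the coordinate translation $T_s\colon (x,y)\mapsto(x+s,y+s)$. Since the Maurer--Cartan form $\alpha^\theta=C_\theta^{-1}\cdot dC_\theta$ of the frame built from \eqref{eq-5.2.16} is, through the sine-Gordon/$K$-surface correspondence underlying Toda's construction, assembled solely from $\omega_{\mbox{\tiny conic}}$ and its first derivatives, it is $T_s$-invariant. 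Integrating and comparing initial values at the origin then gives the equivariance $C_\theta(x+s,y+s)=C_\theta(s,s)\cdot C_\theta(x,y)$, with $s\mapsto C_\theta(s,s)$ a one-parameter subgroup of $\widetilde{\Lambda}(G_2)_\sigma$.

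For $\phi_2$ I would feed this into the Sym formula $\phi_2=\theta\cdot(\partial C_\theta/\partial\theta)\cdot C_\theta^{-1}$: writing $G=C_\theta(s,s)$ and differentiating the product, the chain rule yields $\phi_2(x+s,y+s)_\theta=\operatorname{Ad}(G)\,\phi_2(x,y)_\theta+\phi_2(s,s)_\theta$, so that $\phi_2$ is invariant under a one-parameter group of rigid motions of $\mathbb{R}^3$. To confirm that this group is rotational (not helicoidal) and that the surface is of conic type, I would match with Gray's surface: by Lemmas \ref{lem-5.2.2} and \ref{lem-5.2.4} the potential \eqref{eq-5.2.16} carries exactly the angle function of $f_{\mbox{\tiny conic}}$, so by the fundamental theorem for $K$-surfaces (an immersion in asymptotic Chebyshev coordinates is determined up to a rigid motion by its angle function) $\phi_2$ is congruent to $f_{\mbox{\tiny conic}}$, a conic $K$-surface of revolution.

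For $\phi_1$ I would transport the same symmetry through the morphing. By Remark \ref{rem-5.2.6} the potential satisfies the morphing condition \eqref{M}, so Theorem \ref{thm-4.3.1} applies and $C'_\lambda(z,\bar z)=C_\lambda(z,\bar z)\cdot h^\mathbb{C}(z,\bar z)$ is the framing of the harmonic map $f_1$. Because the equivariance above holds for real $s$ and $\mathbb{B}^2$ is totally real in $\mathbb{C}^2$, analytic continuation (cf.\ Remark \ref{rem-4.3.2}) gives $C_\lambda(z+s,\overline{z+s})=C_\lambda(s,s)\cdot C_\lambda(z,\bar z)$ for $z\mapsto z+s$, $s\in\mathbb{R}$. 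I would then argue that the gauge $h^\mathbb{C}$ inherits the same $T_s$-equivariance, since it is fixed by the $\nu_1$-reality condition of the proof of Proposition \ref{prop-3.3.2} and real translation preserves that condition; consequently $C'_\lambda$ obeys a corresponding equivariance up to an $H_1$-factor, and the Sym--Bobenko formula sends $T_s$ to a rigid motion of $\mathbb{R}^3$. This exhibits $\phi_1$ as a CMC surface admitting a one-parameter motion group, hence a surface of revolution; reading off the infinitesimal generator $(\partial_x+\partial_y)C'_\lambda|_{0}$ as an infinitesimal rotation and its profile then identifies $\phi_1$ as a Delaunay surface.

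The hard part will be the second step of the $\phi_1$ argument: showing that the $T_s$-symmetry genuinely survives the Birkhoff and Iwasawa factorizations (Theorems \ref{thm-3.1.5}, \ref{thm-3.1.6}) and the non-canonical $H^\mathbb{C}$-valued gauge $h^\mathbb{C}$, rather than being destroyed by the choices made there. I expect this to be handled exactly as the scaling equivariance \eqref{eq-5.2.9} was propagated in the proof of Proposition \ref{prop-5.2.1}, namely by invoking the uniqueness in the decomposition theorems to carry $T_s$-equivariance factor by factor. The remaining delicacy---distinguishing a true revolution from a screw or cylindrical motion---is resolved on the $\phi_2$ side by the congruence with $f_{\mbox{\tiny conic}}$ and on the $\phi_1$ side by verifying that the generator is a pure rotation.
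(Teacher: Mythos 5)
Your skeleton matches the paper's: both arguments rest on the translation equivariance $C_\theta(x+t,y+t)=\chi_\theta(t)\cdot C_\theta(x,y)$ coming from the fact that $\omega_{\mbox{\tiny conic}}$ depends only on $x-y$, both identify $\phi_2$ with $f_{\mbox{\tiny conic}}$ via Toda's reconstruction from the angle function, and both conclude for $\phi_1$ by exhibiting a one-parameter isometry group and citing the Delaunay characterization (the paper invokes the theorem of Dorfmeister--Haak, so your worry about distinguishing a rotation from a screw motion is absorbed there). However, the step you yourself single out as "the hard part" is exactly where your proposed mechanism fails. You claim the gauge $h^\mathbb{C}$ "inherits the same $T_s$-equivariance, since it is fixed by the $\nu_1$-reality condition," but in the construction of Proposition \ref{prop-3.3.2} one has $\exp X(p)=C_\lambda(p)^{-1}\cdot\nu_1(C_\lambda(p))$ and $h^\mathbb{C}=\exp(\tfrac12 X)$; applying the translation gives $\exp X(z+t)=C_\lambda(z)^{-1}\cdot\chi_\lambda(t)^{-1}\cdot\nu_1(\chi_\lambda(t))\cdot\nu_1(C_\lambda(z))$, which collapses to $\exp X(z)$ only if $\chi_\lambda(t)\in G_1=\operatorname{Fix}(\nu_1)$ --- and that is precisely the conclusion you are trying to reach. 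Nor does "uniqueness in the decomposition theorems" rescue this: $h^\mathbb{C}$ is not produced by a Birkhoff or Iwasawa factorization (unlike the scaling symmetry \eqref{eq-5.2.9} in Proposition \ref{prop-5.2.1}, where uniqueness of the Birkhoff splitting genuinely propagates the symmetry), so there is no uniqueness statement to invoke. Your argument for $\phi_1$ is therefore circular at its key step.

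The paper closes this gap by a direct computation that you would need to supply. From $C'_\lambda(z+t,\bar z+t)=\chi_\lambda(t)\cdot C'_\lambda(z,\bar z)\cdot k^\mathbb{C}(t,z,\bar z)$ with $k^\mathbb{C}=h^\mathbb{C}(z,\bar z)^{-1}\cdot h^\mathbb{C}(z+t,\bar z+t)=\operatorname{diag}(d,d^{-1})\in H^\mathbb{C}$, one writes out the Maurer--Cartan forms of both sides in the standard shape \eqref{eq-5.2.19}--\eqref{eq-5.2.20} for an extended framing of a harmonic map into $S^2$; comparing the $12$-entry of $U$ (which acquires a factor $d^{-2}$) with the $21$-entry of $V$ (which acquires a factor $d^{2}$), both of which carry the same nonzero constant $H$ times $e^{u/2}$, forces $d^4=1$, hence $d\in\{\pm1,\pm i\}$ and $k^\mathbb{C}\in S(U(1)\times U(1))=H_1$. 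Only then does $C'_\lambda(z+t,\bar z+t),\,C'_\lambda(z,\bar z)\in G_1$ and $k^\mathbb{C}\in H_1\subset G_1$ yield $\chi_\lambda(t)\in G_1$, giving the one-parameter group of isometries of $\phi_1$. Without this (or an equivalent) argument your proof of the Delaunay half of the statement is incomplete.
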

\begin{proof} 
   The $K$-surface $\phi_2(x,y)_\theta:W\to\mathbb{R}^3$ is endowed with the 
angle function $\omega_{\mbox{\tiny conic}}(x,y)$ (cf.\ \eqref{eq-5.2.16}). 
   Therefore, Toda \cite{To} assures that $\phi_2(x,y)_\theta:W\to\mathbb{R}^3$ 
coincides, up to an isometry of $\mathbb{R}^3$, with the $K$-surface 
$f_{\mbox{\tiny conic}}$ given in Lemma \ref{lem-5.2.2}. 
   Consequently, the rest of proof is to conclude that 
$\phi_1(z,\bar{z})_\lambda:V\to\mathbb{R}^3$ is a Delaunay surface.  
   First, let us verify that 
\begin{equation}\label{eq-5.2.17}
\begin{array}{ll}
 C_\theta(x+t,y+t)=\chi_\theta(t)\cdot C_\theta(x,y), 
& \mbox{for any $t\in\mathbb{R}$ with $(x+t,y+t)\in W$}, 
\end{array}   
\end{equation}
where $\chi_\theta(t):=C_\theta(t,t)$. 
   By the proof of Lemma \ref{lem-5.2.4} we have 
$(\partial_x\omega_{\mbox{\tiny conic}})(x+t,y+t)
 =(\partial_x\omega_{\mbox{\tiny conic}})(x,y)$ 
and $\omega_{\mbox{\tiny conic}}(x+t,y+t)=\omega_{\mbox{\tiny conic}}(x,y)$. 
   Therefore 
$(C_\theta^{-1}\cdot dC_\theta)(x+t,y+t)=(C_\theta^{-1}\cdot d C_\theta)(x,y)$
follows from the equation (6) in \cite{To2}; and thus 
\[
(C_\theta^{-1}\cdot dC_\theta)(x+t,y+t)=(C_\theta^{-1}\cdot d C_\theta)(x,y)
=\bigl((\chi_\theta(t)\cdot C_\theta)^{-1}
    \cdot d(\chi_\theta(t)\cdot C_\theta)\bigr)(x,y). 
\] 
  In view of $C_\theta(0,0)\equiv\operatorname{id}$ one sees that 
$C_\theta(0+t,0+t)=\chi_\theta(t)\cdot C_\theta(0,0)=\chi_\theta(t)$. 
  Hence, one concludes \eqref{eq-5.2.17}.  
  From \eqref{eq-5.2.17} it follows that 
\[
\begin{array}{ll}
 C_\lambda(z+t,\bar{z}+t)=\chi_\lambda(t)\cdot C_\lambda(z,\bar{z}), 
& \lambda\in S^1, 
\end{array}   
\]
where we remark that the variable $\theta$ of $\chi_\theta(t)$ can vary 
in the whole $\mathbb{C}^*$ because of $\chi_\theta(t)=C_\theta(t,t)$. 
   Since 
$C'_\lambda(z,\bar{z})=C_\lambda(z,\bar{z})\cdot h^\mathbb{C}(z,\bar{z})$, we 
deduce that   
\begin{equation}\label{eq-5.2.18}
  C'_\lambda(z+t,\bar{z}+t)
  =\chi_\lambda(t)\cdot C'_\lambda(z,\bar{z})\cdot k^\mathbb{C}(t,z,\bar{z}),  
\end{equation} 
where 
$k^\mathbb{C}(t,z,\bar{z})
 :=h^\mathbb{C}(z,\bar{z})^{-1}\cdot h^\mathbb{C}(z+t,\bar{z}+t)$.  
   If $k^\mathbb{C}(t,z,\bar{z})$ belongs to $H_1$ ($\subset G_1$), then it is 
immediate from $C'_\lambda(z+t,\bar{z}+t),C'_\lambda(z,\bar{z})\in G_1$ that 
$\chi_\lambda(t)\in G_1$; so that $\phi_1(z,\bar{z})_\lambda:V\to\mathbb{R}^3$ 
admits a one-parameter group of isometries, which implies that 
$\phi_1(z,\bar{z})_\lambda:V\to\mathbb{R}^3$ is a Delaunay surface (cf.\ 
Theorem \cite[p.\ 127]{Do-Ha}). 
   Thus it suffices to confirm  
\[ 
 k^\mathbb{C}(t,z,\bar{z})\in H_1. 
\]  
   For the extended framing $C'_\lambda(z,\bar{z})$ of the harmonic map 
$(f_1)_\lambda:(V,J)\to S^2$, we have the Maurer-Cartan form:    
\[
\begin{array}{ll}
  {C'_\lambda}^{-1}\cdot\partial_z C'_\lambda=U, 
& {C'_\lambda}^{-1}\cdot\partial_{\bar{z}} C'_\lambda=V,\\
 U
 =\begin{pmatrix}
   u_z/4 & -(\lambda^{-1}/2)\cdot H\cdot e^{u/2} \\
   \lambda^{-1}\cdot Q\cdot e^{-u/2} & -u_z/4
  \end{pmatrix},
& V
 =\begin{pmatrix}
   -u_{\bar{z}}/4 & -\lambda\cdot R\cdot e^{-u/2} \\
   (\lambda/2)\cdot H\cdot e^{u/2} & u_{\bar{z}}/4
  \end{pmatrix}, 
\end{array}  
\] 
where $H$ ($\neq 0$) is constant. 
   Since 
$k^\mathbb{C}(t,z,\bar{z})
 =h^\mathbb{C}(z,\bar{z})^{-1}\cdot h^\mathbb{C}(z+t,\bar{z}+t)
 \in H^\mathbb{C}=S(GL(1,\mathbb{C})\times GL(1,\mathbb{C}))$ 
is a diagonal matrix, we can express it as  
\[
  k^\mathbb{C}(t,z,\bar{z})
                =\begin{pmatrix}
                 d(t,z,\bar{z}) & 0 \\
                 0 & d(t,z,\bar{z})^{-1}
                 \end{pmatrix}.
\]
   Then, the Maurer-Cartan form on the right hand side of \eqref{eq-5.2.18} is  
\begin{equation}\label{eq-5.2.19} 
\begin{array}{l} 
U(z,\bar{z})
 =\begin{pmatrix}
   u_z(z,\bar{z})/4+d^{-1}\cdot d_z 
 & -(\lambda^{-1}/2)\cdot H\cdot e^{u(z,\bar{z})/2}\cdot d^{-2} \\
   \lambda^{-1}\cdot Q(z)\cdot e^{-u(z,\bar{z})/2}\cdot d^2 
 & -u_z(z,\bar{z})/4-d^{-1}\cdot d_z
  \end{pmatrix},\\
V(z,\bar{z})
 =\begin{pmatrix}
   -u_{\bar{z}}(z,\bar{z})/4+d^{-1}\cdot d_{\bar{z}} 
  & -\lambda\cdot R(\bar{z})\cdot e^{-u(z,\bar{z})/2}\cdot d^{-2} \\
   (\lambda/2)\cdot H\cdot e^{u(z,\bar{z})/2}\cdot d^2 
  & u_{\bar{z}}(z,\bar{z})/4-d^{-1}\cdot d_{\bar{z}}
  \end{pmatrix}.
\end{array}
\end{equation} 
   The Maurer-Cartan form on the left hand side of \eqref{eq-5.2.18} is   
\begin{equation}\label{eq-5.2.20}
\begin{array}{@{}l@{}} 
U(z+t,\bar{z}+t)
 =\begin{pmatrix}
   u_z(z+t,\bar{z}+t)/4 
 & -(\lambda^{-1}/2)\cdot H\cdot e^{u(z+t,\bar{z}+t)/2} \\
   \lambda^{-1}\cdot Q(z+t)\cdot e^{-u(z+t,\bar{z}+t)/2} 
 & -u_z(z+t,\bar{z}+t)/4
  \end{pmatrix},\\
V(z+t,\bar{z}+t)
 =\begin{pmatrix}
   -u_{\bar{z}}(z+t,\bar{z}+t)/4 
  & -\lambda\cdot R(\bar{z}+t)\cdot e^{-u(z+t,\bar{z}+t)/2} \\
   (\lambda/2)\cdot H\cdot e^{u(z+t,\bar{z}+t)/2} 
  & u_{\bar{z}}(z+t,\bar{z}+t)/4
  \end{pmatrix}.
\end{array}
\end{equation}
   Let us compare the 12-entry of $U$ with the 21-entry of $V$ in 
\eqref{eq-5.2.19} and \eqref{eq-5.2.20}. 
   Then one has $d^4=1$, whence $d=\pm i$, $\pm 1$. 
   This means that $k^\mathbb{C}(t,z,\bar{z})\in S(U(1)\times U(1))=H_1$.    
\end{proof} 


\end{document}